\newtheorem{theorem}{Theorem}[section]
\newtheorem{thm}{Theorem}[section]
\newtheorem{prop}[theorem]{Proposition}
\newtheorem{lemma}[theorem]{Lemma}
\newtheorem{cor}[theorem]{Corollary}
\theoremstyle{plain}
\numberwithin{equation}{theorem}
\theoremstyle{remark}
\newtheorem{claim}[theorem]{Claim}
\newtheorem{remark}[theorem]{Remark}
\newtheorem{remarks}[theorem]{Remarks}
\newcommand{\C}{{\mathbb C}}
\renewcommand{\l}{\lambda}
\newcommand{\Q}{{\mathbb Q}}
\newcommand{\Z}{{\mathbb Z}}
\newcommand{\cV}{{\mathcal V}}
\newcommand{\fO}{\mathfrak o}
\newcommand{\pic}{Pic}
\newcommand{\Qbar}{\bar{\Q}}
\DeclareMathOperator{\Gal}{Gal}
\DeclareMathOperator{\cD}{\mathcal{D}}
\DeclareMathOperator{\bN}{\mathbb{N}}
\newcommand{\qbar}{\overline{\Q}}
\newcommand{\bfc}{{\mathbf c}}
\newcommand{\bff}{{\mathbf f}}
\newcommand{\bfA}{{\mathbf A}}
\newcommand{\bfB}{{\mathbf B}}
\newcommand{\bP}{{\mathbb P}}
\newcommand{\lra}{\longrightarrow}
\newcommand{\hhat}{{\widehat h}}
\begin{document}

\title{Variation of the canonical height in a family of rational maps}

\author{D.~Ghioca}
\address{
Dragos Ghioca\\
Department of Mathematics\\
University of British Columbia\\
Vancouver, BC V6T 1Z2\\
Canada
}
\email{dghioca@math.ubc.ca}

\author{N.~M.~Mavraki}
\address{
Niki Myrto Mavraki\\
Department of Mathematics\\
University of British Columbia\\
Vancouver, BC V6T 1Z2\\
Canada
}
\email{myrtomav@math.ubc.ca}

\begin{abstract}
Let $d\ge 2$ be an integer, let $\bfc\in \Qbar(t)$ be a rational map, and  let $\bff_t(z):=\frac{z^d+t}{z}$ be a family of rational maps indexed by $t$. For each $t=\l\in\Qbar$, we let $\hhat_{\bff_\l}(\bfc(\l))$ be the canonical height of $\bfc(\l)$ with respect to the rational map $\bff_\l$; also we let $\hhat_\bff(\bfc)$ be the canonical height of $\bfc$ on the generic fiber of the above family of rational maps. We prove that there exists a constant $C$ depending only on $\bfc$ such that  for each $\l\in\Qbar$, $\left|\hhat_{\bff_\l}(\bfc(\l))-\hhat_\bff(\bfc)\cdot h(\l)\right|\le C$. This improves a result of Call and Silverman \cite{Call-Silverman} for this family of rational maps. 
\end{abstract}

\thanks{2010 AMS Subject Classification: Primary 11G50; Secondary 14G17, 11G10.
 The research of the first author was partially supported by an NSERC grant. The second author was partially supported by Onassis Foundation.}
\maketitle


\section{Introduction}
\label{intro}

Let $X$ be a curve defined over $\Qbar$, let $\cV\lra X$ be an algebraic family of varieties $\{\cV_\l\}_{\l\in X}$, let $\Phi:\cV\lra \cV$ be an endomorphism with the property that there exists $d>1$, and there exists a divisor $\cD$ of $\cV$ such that $\Phi^*(\cD)= d\cdot \cD$. Then for all but finitely many $\l\in X$, there is a well-defined canonical height $\hhat_{\cV_\l,\cD_\l,\Phi_\l}$ on the fiber above $\l$. Let $P:X\lra \cV$ be an arbitrary section; then for each $\l\in X(\Qbar)$, we denote by $P_\l$ the corresponding point on $\cV_\l$. Also, $P$ can be viewed as an element of $V(\Qbar(X))$ and thus we denote by $\hhat_{V,D,\Phi}(P)$ the canonical height of $P$ with respect to the action of $\Phi$ on the generic fiber $(V,D)$ of $(\cV,\cD)$. Extending a result of Silverman \cite{Sil83} for the variation of the usual canonical height in algebraic families of abelian varieties, Call and Silverman \cite[Theorem 4.1]{Call-Silverman} proved that 
\begin{equation}
\label{C-S result}
\hhat_{\cV_\l,\cD_\l,\Phi_\l}(P_\l)=\hhat_{V,D, \Phi}(P)\cdot h(\l)+o(h(\l)),
\end{equation}
where $h(\l)$ is a Weil height on $X$. In the special case $\cV\lra X$ is an elliptic surface, Tate \cite{tate83} improved the error term of \eqref{C-S result} to $O(1)$ (where the implied constant depends on $P$ only, and it is independent of $\l$). Furthermore, Silverman \cite{Silverman83, Silverman-2, Silverman-3} proved that the difference of the main terms from \eqref{C-S result}, in addition to being bounded, varies quite regularly as a function
of $\l$, breaking up into a finite sum of well-behaved functions at finitely many places. It is natural to ask whether there are other instances when the error term of \eqref{C-S result} can be improved to $O_P(1)$. 

In \cite{ingram10}, Ingram showed that when $\Phi_\l$ is an algebraic family of polynomials acting on the affine line, then again the error term in \eqref{C-S result} is $O(1)$ (when the parameter space $X$ is the projective line). More precisely, Ingram proved that for an arbitrary parameter curve $X$, there exists   $D = D(\bff, P) \in \pic(X)\otimes \Q$ of
degree $\hhat_{\bff}(P)$ such that $
 \hhat_{\bff_\l}(P_\l) = h_D(\l) + O(1)$. 
This result is an analogue of Tate's theorem~\cite{tate83} in  the
setting of arithmetic dynamics. Using this result and applying an
observation of Lang~\cite[Chap.~5, Prop.~5.4]{Lang-diophantine}, the error term  can be improved to $O(h(\l)^{1/2})$ and 
furthermore, in the special case where $X = \bP^1$ the error term
can be replaced by  $O(1)$.  In \cite{prep}, Ghioca, Hsia an Tucker showed that the error term is also uniformly bounded independent of $\l\in X$ (an arbitrary projective curve) when $\Phi_\l$ is an algebraic family of rational maps satisfying the properties:
\begin{enumerate}
\item[(a)] each $\Phi_\l$ is superattracting at infinity, i.e. if $\Phi_\l=\frac{P_\l}{Q_\l}$ for algebraic families of polynomials $P_\l,Q_\l\in \Qbar[z]$, then $\deg(P_\l)\ge 2+\deg(Q_\l)$; and 
\item[(b)] the resultant of $P_\l$ and $Q_\l$ is a nonzero constant.
\end{enumerate}
The condition (a) is very mild for applications; on the other hand condition (b) is restrictive. Essentially condition (b) asks that $\Phi_\l$ is a well-defined rational map of same degree as on the generic fiber, i.e., all fibers of $\Phi$ are \emph{good}.

Our main result is to improve the error term of \eqref{C-S result} to $O(1)$ for the algebraic family of rational maps $\bff_t(z)=\frac{z^d+t}{z}$ where the parameter $t$ varies on the projective line. We denote by $\hhat_{\bff_\l}$ the canonical height associated to $\bff_\l$ for each $t=\l\in\Qbar$, and we denote by $\hhat_\bff$ the canonical height on the generic fiber (i.e., with respect to the map $\bff_t(z):=\frac{z^d+t}{z}\in\Qbar(t)(z)$).
\begin{thm}
\label{variation of canonical height}
Let $\bfc\in\Qbar(t)$ be a rational map, let $d\ge 2$ be an integer, and let $\{\bff_t\}$ be the algebraic family of rational maps given by $\bff_t(z):=\frac{z^d+t}{z}$. Then as  $t=\l$ varies in $\Qbar$ we have
\begin{equation}
\label{formula variation}
\hhat_{\bff_\l}(\bfc(\l)) = \hhat_\bff(\bfc)\cdot h(\l) + O(1),
\end{equation}
where the constant in $O(1)$ depends only on $\bfc$, and it is independent of  $\l$.
\end{thm}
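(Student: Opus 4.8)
The strategy is to break the canonical height into local pieces attached to the homogeneous lift of $\bff_\l$, to compute each piece as an explicit function of $|\l|_v$, and then to sum. Fix a number field $K$ containing the coefficients of $\bfc$ and write $\bfc=a/b$ with $a,b\in K[t]$ coprime. For $\l\in\Qbar$ we work in $L:=K(\l)$; the estimates below depend only on $v$-adic data and on $\bfc$, so they give a bound independent of $L$. The map $\bff_\l$ has the homogeneous lift $F_\l(X,Y)=(X^d+\l Y^d,\,XY^{d-1})$, whose resultant is $\pm\l$; thus $\bff_\l$ has good reduction at a nonarchimedean $v$ exactly when $|\l|_v=1$, and the family degenerates toward $z\mapsto z^{d-1}$ as $\l\to 0$ and toward the constant map $\infty$ as $\l\to\infty$. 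Writing $\|(A,B)\|_v:=\max\{|A|_v,|B|_v\}$ and $G_v(X,Y):=\lim_{n\to\infty}d^{-n}\log\|F_\l^n(X,Y)\|_v$ for the local canonical height, one has $\hhat_{\bff_\l}(\bfc(\l))=\sum_v n_v\,G_v(a(\l),b(\l))$ with the usual weights $n_v$. (I note in passing that for $d=2$ the conjugation $z\mapsto z/\sqrt{\l}$ carries $\bff_\l$ to the fixed map $z\mapsto z+1/z$, which has everywhere good reduction, giving a short proof in that case; this trick breaks down for $d\ge 3$, hence the need for the local analysis.)

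The first step is to make $G_v$ explicit. For a nonarchimedean $v$ with $|\l|_v\ge 1$, iterating $F_\l$ and using the resultant identity gives the closed form
\[
G_v(X,Y)=\max\Big\{\,\log\|(X,Y)\|_v,\ \tfrac1d\log|\l|_v+\log|Y|_v\,\Big\},
\]
which reduces to the naive local height when $|\l|_v=1$. For $|\l|_v<1$ one finds instead $G_v(X,Y)=\log\|(X,Y)\|_v+E_v(X,Y)$, where $E_v(X,Y)=\sum_{0\le k<m}d^{-k-1}\log\max\{|\l|_v,|z_k|_v\}$, $(z_k)$ being the $\bff_\l$-orbit of $z_0=X/Y$ and $m$ the first index (if any) with $|z_m|_v>1$; in particular $\tfrac1{d-1}\log|\l|_v\le E_v\le 0$. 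At the finitely many archimedean places the same formulas hold up to an additive error bounded in terms of $d$ alone, obtained by a delicate but routine telescoping of the escape-rate function. I expect this step, together with the control of $E_v$ near $t=0$ described below, to be the main obstacle: one must show the error terms are uniform in $\l$. This is exactly where the naive argument fails — telescoping $\hhat_{\bff_\l}(\bfc(\l))=\lim_n d^{-n}h(\bff^n(\bfc)(\l))$ and bounding $|h(g(\l))-\deg(g)h(\l)|$ by the Nullstellensatz introduces a resultant term that grows like $d^{2n}$, so one cannot merely bound the local contributions but must identify them.

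Granting the formulas, substitute $(X,Y)=(a(\l),b(\l))$. The ``$\max$'' part contributes $\sum_v n_v\max\{\log\|(a(\l),b(\l))\|_v,\ \tfrac1d\log^+|\l|_v+\log|b(\l)|_v\}$; comparing the values of $a,b$ at $\l$ with $|\l|^{\deg a},|\l|^{\deg b}$, and using that coprimality of $a,b$ makes their resultant a unit at almost every $v$, identifies this with $\big(\max\{\deg a,\ \deg b+\tfrac1d\}\big)h(\l)+O_{\bfc}(1)$. The remaining part $\sum_v n_v E_v(a(\l),b(\l))$ is supported on the archimedean places and on the $v$ with $|\l|_v<1$, and this is where the bad fibre $t=0$ enters. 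At almost every $v$ the $\bff_\l$-orbit of $\bfc(\l)$ shadows the orbit of $\bfc(0)$ under the reduction $z\mapsto z^{d-1}$; since $\bff_0^{\,k}(w)=w^{(d-1)^k}$ vanishes only for $w=0$, one gets $|z_k|_v=1$ for all $k$ unless $\bfc(0)=0$, in which case $|z_0|_v=|\l|_v^{\operatorname{ord}_{t=0}\bfc}$, the orbit leaves the residue disc at the next step, and $E_v(a(\l),b(\l))=\tfrac1d\log|\l|_v$. A parallel (bounded-error) computation handles the archimedean places. Hence $\sum_v n_v E_v(a(\l),b(\l))$ equals $-\tfrac1d\,\varepsilon_0\,h(\l)+O_{\bfc}(1)$, where $\varepsilon_0=1$ if $\bfc(0)=0$ and $\varepsilon_0=0$ otherwise.

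Combining the two parts,
\[
\hhat_{\bff_\l}(\bfc(\l))=M\cdot h(\l)+O_{\bfc}(1),\qquad M:=\max\{\deg a,\ \deg b+\tfrac1d\}-\tfrac1d\,\varepsilon_0 ,
\]
a constant depending only on $\bfc$. Finally, comparing with the Call--Silverman estimate \eqref{C-S result}, which gives $\hhat_{\bff_\l}(\bfc(\l))=\hhat_\bff(\bfc)h(\l)+o(h(\l))$, and letting $h(\l)\to\infty$ forces $M=\hhat_\bff(\bfc)$; one may also check this directly from the recursion for $\bfc_n:=\bff^n(\bfc)$, where $\deg\bfc_{n+1}-d\deg\bfc_n\in\{-1,0,1\}$ and the sign is governed precisely by $\deg a$ versus $\deg b$ and by whether $\bfc_n(0)=0$. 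This yields \eqref{formula variation}, with the constant depending only on $\bfc$ and independent of $\l$.
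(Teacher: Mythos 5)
Your overall architecture is the same as the paper's: reduce to a place-by-place comparison of the local canonical height with an explicit quantity, and then sum. Your nonarchimedean closed form for $G_v$ (both the $\max$ formula when $|\l|_v\ge 1$ and the escape-sum $E_v$ when $|\l|_v<1$) checks out, your value for $M$ agrees with the paper's computation of $\hhat_\bff(\bfc)$ in Propositions~\ref{canonical height generic nonzero} and \ref{canonical height generic zero}, and the closing trick of deducing $M=\hhat_\bff(\bfc)$ from the Call--Silverman $o(h(\l))$ estimate is a nice shortcut that avoids the generic-fiber degree count. The $d=2$ conjugation $z\mapsto z/\sqrt{\l}$ carrying $\bff_\l$ to the $\l$-independent map $z+1/z$ is a genuine simplification for that case: it reduces the problem to the ordinary height machine for the rational map $\mu\mapsto \bfA(\mu^2)/(\mu\,\bfB(\mu^2))$ and is considerably shorter than the paper's separate treatment of $d=2$ (Propositions~\ref{e=0 d=2}, \ref{d=2 really small lambda}, \ref{large lambda non-archimedean}).

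The gap is exactly where you flag it, and it is not minor. Your proof needs two uniformity statements that you assert but do not prove: (a)~at each archimedean $v$, $G_v(a(\l),b(\l))$ differs from the nonarchimedean-style formula by an amount bounded \emph{independently of $\l$}; and (b)~at the finitely many bad nonarchimedean $v$ (where $\bfc$ has bad reduction, or $\bfc(0)\in\{0,\infty\}$, or the orbit of $\bfc(\l)$ does not stay in the unit circle mod $v$), the escape sum $E_v(a(\l),b(\l))$ equals $-\tfrac1d\varepsilon_0\log^-|\l|_v+O_\bfc(1)$, again uniformly in $\l$. The a priori bound $\tfrac1{d-1}\log|\l|_v\le E_v\le 0$ is nowhere near enough: as $|\l|_v\to 0$ it diverges, and the whole content of the theorem is that the actual value does not. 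Showing this is precisely the hard case~(iii) in the paper's outline (the bad fibre $\l=0$), and it is what Sections~\ref{section e=0}--\ref{proofs} are devoted to — in particular the inductive control of the escape time $n_1$ in Lemmas~\ref{no n_1}, \ref{no n_1 2} and Propositions~\ref{really small lambda}, \ref{d=2 really small lambda}. Without that analysis your argument is an outline, not a proof; phrases like ``delicate but routine telescoping'' and ``a parallel (bounded-error) computation'' are exactly the places where the real work lives.
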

Alternatively, Theorem~\ref{variation of canonical height} yields that the function $\l\mapsto \hhat_{\bff_\l}(\bfc(\l))$ is a Weil height on $\bP^1$ associated to the divisor $\hhat_\bff(\bfc)\cdot \infty\in\pic(\bP^1)\otimes\Q$. 

We note that on the fiber $\l=0$, the corresponding rational map $\Phi_0$ has degree $d-1$ rather than $d$ (which is the generic degree in the family $\Phi_\l$). So, our result is the \emph{first} example of an algebraic family of rational maps (which are neither totally ramified at infinity, nor Latt\'es maps, and also admit \emph{bad} fibers) for which the error term in \eqref{C-S result} is $O(1)$. In addition, we mention that the family $\bff_t(z)=\frac{z^d+t}{z}$ for $t\in\C$ is interesting also from the complex dynamics point of view. Devaney and Morabito \cite{Devaney} proved that the Julia sets $\{J_t\}_{t\in\C}$ of the above maps converge to the unit disk as $t$ converges to $0$ along the rays ${\rm Arg}(t)=\frac{(2k+1)\pi}{d-1}$ for $k=0,\dots, d-1$, providing thus an example of a family of rational maps whose Julia sets have empty interior, but in the limit, these sets converge to a set with nonempty interior.

A special case of our Theorem~\ref{variation of canonical height} is when the starting point $\bfc$ is constant; in this case we can give a precise formula for the $O(1)$-constant appearing in \eqref{formula variation}.
\begin{thm}
\label{precise constant}
Let $d\ge 2$ be an integer, let $\alpha$ be an algebraic number, let $K=\Q(\alpha)$ and let $\ell$ be the number of  non-archimedean places $|\cdot |_v$ of $K$ satisfying $|\alpha|_v\notin\{ 0,1\}$. If $\{\bff_t\}$ is the algebraic family of rational maps given by $\bff_t(z):=\frac{z^d+t}{z}$, then
$$\left|\hhat_{f_\l}(\alpha) - \hhat_\bff(\alpha)\cdot h(\l)\right| < 3d\cdot (1+\ell+2h(\alpha)),$$
as $t=\l$ varies in $\Qbar$.  
\end{thm}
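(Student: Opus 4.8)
The plan is to reduce the statement to a sum of local canonical heights and to estimate these place by place, exploiting the explicit description of the iterates of the constant point $\alpha$. First I would dispose of $\alpha=0$ (then $0$ is preperiodic under $\bff_\l$ for every $\l$ and on the generic fiber, so both sides vanish) and assume $\alpha\neq0$. Writing $\bff_t^n(\alpha)=P_n(t)/Q_n(t)$, the homogeneous recursion for the natural lift $\Phi_t(x,y)=(x^d+ty^d,\,xy^{d-1})$ gives $P_{n+1}=P_n^d+tQ_n^d$, $Q_{n+1}=P_nQ_n^{d-1}$, with $P_1=t+\alpha^d$, $Q_1=\alpha$. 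An induction shows $P_n$ is monic of degree $d^{n-1}$, that $\deg Q_n<d^{n-1}$, and — crucially — that $\gcd(P_n,Q_n)=1$ in $\Qbar[t]$ (evaluate $P_n$ at the roots of $Q_n$, noting the only nonconstant factor introduced at each step is $P_{n-1}$). Hence $\Phi_\l^n(\alpha,1)=\big(P_n(\l),Q_n(\l)\big)$ with no cancellation, and the degree count yields $\hhat_\bff(\alpha)=1/d$. Fixing a number field $K\ni\alpha,\l$ with product–formula normalization, one has $\hhat_{\bff_\l}(\alpha)=\sum_v\l_v$ with $\l_v:=\lim_n d^{-n}\log\max\big(|P_n(\l)|_v,|Q_n(\l)|_v\big)$ the local canonical height, and $h(\l)=\sum_v\log^+|\l|_v$; so it suffices to bound $\big|\l_v-\tfrac1d\log^+|\l|_v\big|$ at each $v$, with the bounds summing to $3d(1+\ell+2h(\alpha))$.

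For the local estimates I would use the telescoping identity $\l_v=\log^+|\alpha|_v+\sum_{n\ge0}d^{-(n+1)}\big(\log m_{n+1}-d\log m_n\big)$ with $m_n:=\max(|P_n(\l)|_v,|Q_n(\l)|_v)$, together with the observation that the $n$-th summand vanishes as soon as the orbit is ``at infinity'', i.e. $|P_n(\l)|_v\ge|Q_n(\l)|_v$ and $|P_n(\l)|_v^d>|\l|_v\,|Q_n(\l)|_v^d$ — a condition which, for non‑archimedean $v$, is preserved under iteration. This gives at once: (i) if $v\nmid\infty$, $|\alpha|_v=1$ and $|\l|_v\le1$, the orbit stays on the unit circle and $\l_v=0=\tfrac1d\log^+|\l|_v$; (ii) if $|\l|_v>\max(1,|\alpha|_v^d)$ the orbit escapes immediately, so $\l_v=\tfrac1d\log|\l|_v=\tfrac1d\log^+|\l|_v$ for $v\nmid\infty$, and $\l_v=\tfrac1d\log^+|\l|_v+O_d(1)$ for $v\mid\infty$ once $|\l|_v$ exceeds an explicit multiple of $\max(1,|\alpha|_v^d)$. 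Thus the only places where the error is nonzero or not obviously bounded are the archimedean ones together with the non‑archimedean $v$ with $|\alpha|_v\neq1$, of which there are exactly $\ell$.

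It remains to bound the error at those places. For $v\nmid\infty$ with $|\alpha|_v>1$ the orbit is large from the start, so $\l_v\in\{\log^+|\alpha|_v,\ \tfrac1d\log|\l|_v\}$ and the error is $\le\log^+|\alpha|_v$. For $v\nmid\infty$ with $|\alpha|_v<1$ one tracks the orbit size $|z_n|_v$: it descends like $|z_0|_v^{(d-1)^n}$ until it reaches the critical size $|\l|_v^{1/d}$ and thereafter oscillates within $[\,|\l|_v^{(d-1)/d},\,|\l|_v^{1/d}\,]$; each summand is $\le0$ and $\ge\log|z_n|_v$, and although the summands in the oscillation phase can have size $\sim\log^-|\l|_v$, they carry the weight $d^{-(n+1)}$ with $n$ large, so the whole sum is $O_d(\log^-|\alpha|_v)$. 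For $v\mid\infty$ I would separate bounded orbits (which then stay in an annulus around $0$, forcing $|\l|_v=O_d(\max(1,|\alpha|_v^d))$ and $\l_v=O_d(1+\log^+|\alpha|_v)$) from escaping orbits (for which only finitely many summands are non‑negligible, giving $\l_v=\tfrac1d\log^+|\l|_v+O_d(1+\log^+|\alpha|_v)$). Summing over all $v$, using $\sum_v\log^+|\alpha|_v=\sum_v\log^-|\alpha|_v=h(\alpha)$ and that the bad non‑archimedean places number $\ell$, and choosing the constants carefully, yields $3d(1+\ell+2h(\alpha))$.

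The hard part is the lower bound for $\l_v$ at the archimedean places (and at the small-$|\l|_v$ non‑archimedean ones): one must quantify how far $m_{n+1}$ can fall below $m_n^d$, equivalently how closely the $\bff_\l$-orbit of $\alpha$ can approach the pole/indeterminacy locus $z=0$ (which is mapped toward $\infty$). The mechanism to exploit is that a near‑collision at step $n$ costs a summand of size at most $\tfrac1d\log^-|\l|_v$ (or $\log^-|z_n|_v$) and is immediately followed by escape, so after the $d^{-(n+1)}$ weighting the cumulative loss is controlled by $h(\alpha)$; making this effective and then shrinking the constant down to the claimed $3d$ is where the real work lies. (For $d=2$ one can shortcut everything: $\bff_t(z)=\sqrt t\,g(z/\sqrt t)$ with $g(u)=u+1/u$ a fixed rational map of good reduction at every finite place, whence $\hhat_{\bff_\l}(\alpha)=\hhat_g(\alpha/\sqrt\l)=h(\alpha/\sqrt\l)+O_g(1)=\tfrac12h(\l)+O(1+h(\alpha))$ directly.)
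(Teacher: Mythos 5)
Your overall plan --- disposing of $\alpha=0$, computing $\hhat_\bff(\alpha)=1/d$ from the coprimality and degree of $P_n,Q_n$, decomposing into local canonical heights, identifying the bad places as the archimedean ones together with the $\ell$ non-archimedean $v$ with $|\alpha|_v\neq 1$, and splitting the local analysis according to the size of $|\l|_v$ --- is structurally the same as the paper's. (One organizational difference: the paper compares the local canonical height to $\frac{1}{d}\log\max\bigl\{|A_{\l,\alpha,1}|_v,|B_{\l,\alpha,1}|_v\bigr\}$ rather than to $\frac{1}{d}\log^+|\l|_v$, and then finishes with a single short \emph{global} height computation comparing $h(f_\l(\alpha))$ to $h(\l)$; this avoids some bookkeeping around the $\log^+$ cutoff.) Your $d=2$ conjugation $\bff_t(z)=\sqrt t\,g(z/\sqrt t)$ with $g(u)=u+1/u$ is a genuinely different and appealing route that the paper does not take; the paper handles $d=2$ by a parallel but noticeably more delicate variant of the $d\ge3$ argument (Proposition~\ref{e=0 d=2}), so your shortcut would likely shorten that case once an explicit bound for $|\hhat_g-h|$ is in hand, which is feasible since $g$ has good reduction at every finite place.

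The genuine gap is exactly the part you flag as ``where the real work lies'', and your sketch of the mechanism there is not quite right. At archimedean places the telescoping summands $\log(m_{n+1}/m_n^d)$ are \emph{not} $\le 0$ (one only has $m_{n+1}\le(1+|\l|_v)m_n^d$), and once the orbit first falls to the critical size $|\l|_v^{1/d}$ it need not remain in the annulus $\bigl[\,|\l|_v^{(d-1)/d},\,|\l|_v^{1/d}\,\bigr]$: a near-cancellation $z_n^d\approx-\l$ can make $|z_{n+1}|_v$ arbitrarily small. The paper's Lemmas~\ref{no n_1} and~\ref{no n_1 2} do not track the orbit past the critical time $n_1$ at all; they invoke the unconditional two-sided bound $\tfrac{|\l|_v}{2}\,M_n^d\le M_{n+1}\le 2M_n^d$ (valid whenever $|\l|_v\le 1$), accept a per-step loss of order $\log(1/|\l|_v)$ for $n\ge n_1$, and then bound $n_1$ from below in terms of $L=\max\bigl\{|\alpha|_v,|\alpha|_v^{-1}\bigr\}$ so that the weight $d^{-n_1}$ converts this loss into $O(d\log(2L))$. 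A related inaccuracy appears earlier: at non-archimedean $v$ with $|\alpha|_v>1$ the claim $\l_v\in\bigl\{\log^+|\alpha|_v,\ \tfrac{1}{d}\log|\l|_v\bigr\}$ fails when $|\l|_v=|\alpha|_v^d$ with cancellation (for instance $\l=-\alpha^d$ gives $\l_v=\tfrac{2}{d}\log|\alpha|_v$); your asserted error bound $\le\log^+|\alpha|_v$ does still hold in that case, but establishing it again requires the small-orbit analysis rather than a two-way escape dichotomy.
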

In particular, Theorem~\ref{precise constant} yields an effective way for determining for any given $\alpha\in\Qbar$ the set of parameters $\l$ contained in a number field of bounded degree such that $\alpha$ is preperiodic for $\bff_\l$. Indeed, if $\alpha\in\Qbar$ then either $\alpha=0$ and then it is preperiodic for all $\bff_\l$, or $\alpha\ne 0$ in which case generically $\alpha$ is not preperiodic and $\hhat_{\bff}(\alpha)=\frac{1}{d}$ (see Proposition~\ref{canonical height generic nonzero}). So, if $\alpha\in\Qbar^*$ 
is preperiodic for $\bff_\l$ then $\hhat_{\bff_\l}(\alpha)=0$ and thus, Theorem~\ref{precise constant} yields that 
\begin{equation}
\label{bounded parameter}
h(\l)<3d^2\cdot (1+\ell+2h(\alpha)). 
\end{equation}
For example, if $\alpha$ is a root of unity, then $h(\l)<3d^2$ for all parameters $\l\in\qbar$ such that $\alpha$ is preperiodic for $\bff_\l$.

Besides the intrinsic interest in studying the above problem, recently it was discovered a very interesting connection between the variation of the canonical height in algebraic families and the problem of unlikely intersections in algebraic dynamics (for a beautiful introduction to this area, please see the book of Zannier \cite{Zannier}). Masser and Zannier \cite{M-Z-1, M-Z-2} proved that for the family of Latt\'es maps $\bff_\l(z)=\frac{(z^2-\l)^2}{4z(z-1)(z-\l)}$ there exist at most finitely many $\l\in\Qbar$ such that both $2$ and $3$ are preperiodic for $\bff_\l$. Geometrically, their result says the following: given the Legendre family of elliptic curves $E_\l$ given by the equation $y^2=x(x-1)(x-\l)$, there exist at most finitely many $\l\in\Qbar$ such that $P_\l:=\left(2,\sqrt{2(2-\l)}\right)$ and $Q_\l:=\left(3, \sqrt{6(3-\l)}\right)$ are simultaneously  torsion points for $E_\l$. Later Masser and Zannier \cite{M-Z-3} extended their result by proving that for any two sections $P_\l$ and $Q_\l$ on any elliptic surface $E_\l$, if there exist infinitely many $\l\in\C$ such that both $P_\l$ and $Q_\l$ are torsion for $E_\l$ then the two sections are linearly dependent over $\Z$. Their proof uses the recent breakthrough results of Pila and Zannier \cite{P-Z}. Moreover, Masser and Zannier exploit in a crucial way the existence of the analytic uniformization map for elliptic curves.   
Motivated by a question of Zannier, Baker and DeMarco \cite{Baker-Demarco} showed that for any $a,b\in\C$, if there exist infinitely many $\l\in\C$ such that both $a$ and $b$ are preperiodic for $\bff_\l(z)=z^d+\l$ (where $d\ge 2$), then $a^d=b^d$. Later their result was generalized by Ghioca, Hsia and Tucker \cite{prep-0} to arbitrary families of polynomials. The method of proof employed in both \cite{Baker-Demarco} and \cite{prep-0} uses an equidistribution statement (see \cite[Theorem~7.52]{Baker-Rumely} and \cite{Favre-Rivera-0, Favre-Rivera}) for points of small canonical height on Berkovich spaces. Later, using the powerful results of Yuan and Zhang \cite{Yuan, Yuan-Zhang} on metrized line bundles, Ghioca, Hsia and Tucker \cite{prep} proved the first results on unlikely intersections  for families of rational maps and also for families of endomorphisms of higher dimensional projective spaces. 
The difference between the results of \cite{Baker-Demarco, prep-0, prep} and the results of \cite{M-Z-1, M-Z-2, M-Z-3} is that for arbitrary families of polynomials there is no analytic uniformization map as in the case of the elliptic curves. Instead one needs to employ a more careful analysis of the local canonical heights associated to the family of rational maps. This led the authors of \cite{prep} to prove the error term in \eqref{C-S result} is $O(1)$ for the rational maps satisfying conditions $(a)-(b)$ listed above. Essentially, in order to use the equidistribution results of Baker-Rumely, Favre-Rivera-Letelier, and Yuan-Zhang, one needs to show that certain metrics converge uniformly and in turn this relies on showing that the local canonical heights associated to the corresponding family of rational maps vary uniformly across the various fibers of the family; this leads to improving to $O(1)$ the error term in \eqref{C-S result}. It is of great interest to see whether the results on unlikely intersections can be extended to more general families of rational maps beyond families of Latt\'es maps \cite{M-Z-1, M-Z-2, M-Z-3}, or of polynomials \cite{Baker-Demarco, prep-0}, or of rational maps with good fibers for all points in the parameter space \cite{prep}. 
On the other hand, a preliminary step to ensure  the strategy from \cite{Baker-Demarco, prep, prep-0} can be employed to proving new results on unlikely intersections in arithmetic dynamics is to improve to $O(1)$ the error term from \eqref{C-S result}. For example, using the exact strategy employed in \cite{prep}, the results of our paper yield that if $\bfc_1(t),\bfc_2(t)\in \Qbar(t)$ have the property that there exist infinitely many $\l\in\Qbar$ such that both $\bfc_1(\l)$ and $\bfc_2(\l)$ are preperiodic under the action of $\bff_\l(z):= \frac{z^d+\l}{z}$, then for \emph{each} $\l\in\Qbar$ we have that $\bfc_1(\l)$ is preperiodic for $\bff_\l$ if and only if $\bfc_2(\l)$ is preperiodic for $\bff_\l$. Furthermore, if in addition $\bfc_1,\bfc_2$ are constant, then the same argument as in \cite{prep} yields that for \emph{each} $\l\in\Qbar$, we have  $\hhat_{\bff_\l}(\bfc_1)=\hhat_{\bff_\l}(\bfc_2)$. Finally, this condition should yield that $\bfc_1=\bfc_2$; however finding the exact relation between $\bfc_1$ and $\bfc_2$ is usually difficult (see the discussion from \cite{prep-0, prep}).

In our proofs we use in an essential way the decomposition of the (canonical) height in a sum of local (canonical) heights. So, in order 
 to prove Theorems~\ref{variation of canonical height} and \ref{precise constant} we show first (see Proposition~\ref{each place}) that for all but finitely many places $v$, the contribution of the corresponding local height to $d^2\cdot \hhat_{\bff_\l}(\bfc(\l))$ matches the $v$-adic contribution to the height for the second iterate $\bff^2_\l(\bfc(\l))$. This allows us to conclude  that $\left|\hhat_{\bff_\l}(\bfc(\l))-\frac{h(\bff_\l^2(\bfc(\l)))}{d^2}\right|$ is uniformly bounded as $\l$ varies. Then, using that $\deg_\l(\bff^2_\l(\bfc(\l)))=\hhat_\bff(\bfc)\cdot d^2$, an application of the height machine finishes our proof. The main difficulty lies in proving that for each place $v$ the corresponding local contribution to $d^2\cdot \hhat_{\bff_\l}(\bfc(\l))$ varies from the $v$-adic contribution to $h(\bff^2_\l(\bfc(\l)))$ by an amount bounded solely in terms of $v$ and of  $\bfc$. In order to derive our conclusion we first prove the statement for the special case when $\bfc$ is constant. Actually, in this latter case we can prove (see Propositions~\ref{e=0} and \ref{e=0 d=2})  that $ \left|\hhat_{\bff_\l}(\bfc(\l))-\frac{h(\bff_\l(\bfc(\l)))}{d}\right|$ is uniformly bounded as $\l$ varies. Then for the general case of Proposition~\ref{each place}, we apply Propositions~\ref{e=0} and \ref{e=0 d=2} to the first iterate of $\bfc(\l)$ under $\bff_\l$. For our analysis, we split the proof into $3$ cases:
\begin{enumerate}
\item[(i)] $|\l|_v$ is much larger than the absolute values of the coefficients of the polynomials $A(t)$ and $B(t)$ defining $\bfc(t):=\frac{A(t)}{B(t)}$. 
\item[(ii)] $|\l|_v$ is bounded above and below by constants depending only on the absolute values of the coefficients of $A(t)$ and of $B(t)$. 
\item[(iii)] $|\l|_v$ is very small. 
\end{enumerate}
The cases (i)-(ii) are not very difficult and the same proof is likely to work for more general families of rational maps (especially if $\infty$ is a superattracting point for the rational maps $\bff_\l$; note that the case $d=2$ for Theorems~\ref{variation of canonical height} and \ref{precise constant} requires a different approach). However, case (iii) is much harder, and here we use in an essential way the general form of our family of maps. It is not  surprising that this is the hard case since $\l=0$ is the only bad fiber of the family $\bff_\l$.
We do not know whether the error term of $O(1)$ can be obtained for the variation of the canonical height in more general families of rational maps. It seems  that each time $\l$ is close to a singularity of the family (i.e., $\l$ is close $v$-adically to some $\l_0$ for which $\deg(\Phi_{\l_0})$ is less than the generic degree in the family) would require a different approach.

The plan of our paper is as follows. In  the next section we setup the notation for our paper. Then in Section~\ref{generic fiber section} we compute the height $\hhat_\bff(\bfc)$ on the generic fiber of our dynamical system. We continue in Section~\ref{reductions section} with a series of reductions of our main results; we reduce Theorem~\ref{variation of canonical height} to proving Proposition~\ref{each place}. We conclude by proving Theorem~\ref{precise constant} in Section~\ref{section e=0}, and then finishing the proof of Proposition~\ref{each place}  in Section~\ref{proofs}.

\medskip

\emph{Acknowledgments.} We thank Joseph Silverman and Laura DeMarco for useful discussions regarding this project.


\section{Notation}
\label{notation}


\subsection{Generalities}

For a rational function $f(z)$, we denote by $f^n(z)$ its $n$-th iterate (for any $n\ge 0$, where $f^0$ is the identity map). We call a point $P$ preperiodic if its orbit under $f$ is finite.

For each real number $x$, we denote $\log^+x:=\log\max\{1,x\}$. 


\subsection{Good reduction for rational maps}

Let $K$ be a number field, let $v$ be a nonarchimedean valuation on
$K$, let $\fO_v$ be the ring of $v$-adic integers of $K$,
and let $k_v$ be the residue field at $v$. If $A,B\in K[z]$ are coprime polynomials, then $\varphi(z):=A(z)/B(z)$ has good reduction (see \cite{Silverman07}) at all places $v$ satisfying the properties:
\begin{enumerate}
\item[(1)] the coefficients of $A$ and of $B$ are in $\fO_v$;
\item[(2)] the leading coefficients of $A$ and of $B$ are units in $\fO_v$; and 
\item[(3)] the resultant of the polynomials $A$ and $B$ is a unit in $\fO_v$. 
\end{enumerate}
Clearly, all but finitely many places $v$ of $K$ satisfy the above conditions (1)-(3). In particular this yields that if we reduce modulo $v$ the coefficients of both $A$ and $B$, then the induced rational map $\overline{\varphi}(z):=\overline{A}(z)/\overline{B}(z)$ is a well-defined rational map defined over $k_v$ of same degree as $\varphi$.


\subsection{Absolute values}

We denote by $\Omega_\Q$ the set of all (inequivalent) absolute values of $\Q$ with the usual normalization so that the product formula holds: $\prod_{v\in\Omega_\Q}|x|_v=1$ for each nonzero $x\in\Q$. For each $v\in\Omega_\Q$, we fix an extension of $|\cdot |_v$ to $\Qbar$. 


\subsection{Heights}
\label{heights subsection}
\subsubsection{Number fields}

Let $K$ be a number field. For each $n\ge 1$, if $P:=[x_0:\cdots :x_n]\in \bP^n(K)$ then the Weil height of $P$ is
$$h(P):=\frac{1}{[K:\Q]}\cdot \sum_{\sigma:K\lra\Qbar}\sum_{v\in\Omega_\Q} \log\max\{|\sigma(x_0)|_v,\cdots ,|\sigma(x_n)|_v\},$$
where the first summation runs over all embeddings $\sigma:K\lra \Qbar$. 
The definition is independent of the choice of coordinates $x_i$ representing $P$ (by an application of the product formula) and it is also independent of the particular choice of number field $K$ containing the coordinates $x_i$ (by the fact that each place $v\in\Omega_\Q$ is defectless, as defined by \cite{Serre}). In this paper we will be concerned mainly with the height of points in $\bP^1$; furthermore, if $x\in \Qbar$, then we identify $x$ with $[x:1]\in\bP^1$ and define its height accordingly. The basic properties for heights which we will use are: for all $x,y\in\Qbar$ we have
\begin{enumerate}
\item[(1)] $h(x+y)\le h(x)+h(y)+\log(2)$,  
\item[(2)] $h(xy)\le h(x)+h(y)$, and
\item[(3)] $h(1/x)=h(x)$.
\end{enumerate}

\subsubsection{Function fields}
We will also work with the height of rational functions (over $\Qbar$). So, if $L$ is any field, then the Weil height of a rational function $g\in L(t)$ is defined to be its degree.


\subsection{Canonical heights}
\subsubsection{Number fields}

Let $K$ be a number field, and let $f\in K(z)$ be a rational map of degree $d\ge 2$. Following \cite{Call-Silverman} we define the canonical height of a point $x\in \bP^1(\Qbar)$ as 
\begin{equation}
\label{definition of canonical height}
\hhat_f(x)=\lim_{n\to\infty} \frac{h(f^n(x))}{d^n}.
\end{equation}
As proved in \cite{Call-Silverman}, the difference $|h(x )-\hhat_f(x )|$ is uniformly bounded for all $x\in \bP^1(\Qbar)$. Also, $\hhat_f(x)=0$ if and only if $x$ is a preperiodic point for $f$. If $x\in \Qbar$ then we view it embedded in $\bP^1$ as $[x:1]$ and denote by $\hhat_f(x)$ its canonical height under $f$ constructed as above.

\subsubsection{Function fields}

Let $L$ be an arbitrary field, let $f\in L(t)(z)$ be a rational function of degree $d\ge 2$, and let $x\in L(t)$. Then the canonical height $\hhat_f(x):=\hhat_f([x:1])$ is defined the same as in \eqref{definition of canonical height}.


\subsection{Canonical heights for points and rational maps as they vary in algebraic families}
\subsubsection{Number fields}

If $\l\in \Qbar^*$, 
$x=[A:B]\in \bP^1(\Qbar)$ and $f_\l(z):=\frac{z^d+\l}{z}$, then we can define $\hhat_{f_\l}(x)$  alternatively as follows. We let $A_{\l,[A:B],0}:=A$ and $B_{\l,[A:B],0}:=B$, and for each $n\ge 0$ we let
$$A_{\l,[A:B],n+1}:=A_{\l,[A:B],n}^d+\l\cdot B_{\l,[A:B],n}^d\text{ and }B_{\l,[A:B],n+1}:=A_{\l,[A:B],n}\cdot B_{\l,[A:B],n}^{d-1}.$$
Then $f_\l^n([A:B])=[A_{\l,[A:B],n}:B_{\l,[A:B],n}]$ and so, 
$$\hhat_{f_\l}(x)=\lim_{n\to\infty} \frac{h([A_{\l,[A:B],n}:B_{\l,[A:B],n}])}{d^n}.$$
Also, for each place $v$, we define the local canonical height of $x=[A:B]$ with respect to $f_\l$ as 
\begin{equation}
\label{defi local canonical height 0}
\hhat_{f_\l,v}(x)=\lim_{n\to\infty} \frac{\log\max\left\{\left|A_{\l,[A:B],n}\right|_v, \left|B_{\l,[A:B],n}\right|_v\right\}}{d^n}.
\end{equation}

If $x\in \Qbar$ we view it embedded in $\bP^1(\Qbar)$ as $[x:1]$ and compute its canonical heights (both global and local) under $f_\l$ as above starting with $A_{\l,x,0}:=x$ and $B_{\l,x,0}:=1$. 

For $x=A/B$ with $B\ne 0$, we get that 
\begin{equation}
\label{conversion 0}
A_{\l,[A:B],n}=A_{\l,x,n}\cdot B^{d^n}\text{ and }B_{\l,[A:B],n}=B_{\l,x,n}\cdot B^{d^n},
\end{equation}
for all $n\ge 0$. If in addition $A\ne 0$, then $B_{\l,[A:B],1}=A\cdot B^{d-1}\ne 0$ and then for all $n\ge 0$ we have
\begin{equation}
\label{conversion -1}
A_{\l,[A:B],n+1}=A_{\l,f_\l(x),n}\cdot B_{\l,[A:B],1}^{d^n}\text{ and }B_{\l,[A:B],n+1}=B_{\l,f_\l(x),n}\cdot B_{\l,[A:B],1}^{d^n}
\end{equation}
and in general, if $B_{\l,[A:B],k_0}\ne 0$, then
\begin{equation}
\label{conversion k_0}
A_{\l,[A:B],n+k_0}=A_{\l,f_\l^{k_0}(x),n}\cdot B_{\l,[A:B],k_0}^{d^n}\text{ and }B_{\l,[A:B],n+k_0}=B_{\l,f_\l^{k_0}(x),n}\cdot B_{\l,[A:B],k_0}^{d^n}.
\end{equation}

We will be interested also in studying the variation of the canonical height of a family of starting points parametrized by a rational map (in $t$) under the family $\{\bff_t(z)\}$ of rational maps. As before, $\bff_t(z):=\frac{z^d+t}{z}$, and for each $t=\l\in\Qbar$ we get a map in the above family of rational maps. When we want to emphasize the fact that each $\bff_\l$ (for $\l\in\Qbar^*$) belongs to this family of rational maps (rather than being a single rational map), we will use the boldface letter $\bff$ instead of $f$. Also we let $\bfc(t):=\frac{\bfA(t)}{\bfB(t)}$ where $\bfA,\bfB\in K[t]$ are coprime polynomials defined over a number field $K$. Again, for each $t=\l\in\Qbar$ we get a point $\bfc(\l)\in \bP^1(\Qbar)$. 

We define $\bfA_{\bfc,n}(t)\in K[t]$ and $\bfB_{\bfc,n}(t)\in K[t]$  so that for each $n\ge 0$ we have $\bff_t^n(\bfc(t))=[\bfA_{\bfc, n}(t):\bfB_{\bfc, n}(t)]$. In particular, for each $t=\l\in\Qbar$ we have $\bff_\l^n(\bfc(\l))=[\bfA_{\bfc, n}(\l):\bfB_{\bfc, n}(\l)]$.

We let $\bfA_{\bfc, 0}(t):=\bfA(t)$ and $\bfB_{\bfc,0}(t):=\bfB(t)$. Our definition for $\bfA_{\bfc,n}$ and $\bfB_{\bfc,n}$ for $n= 1$ will depend on whether $\bfA(0)$ (or equivalently $\bfc(0)$) equals $0$ or not. If $\bfA(0)\ne 0$, then we define
\begin{equation}
\label{1st iterate nonzero}
\bfA_{\bfc,1}(t):=\bfA(t)^d+t \bfB(t)^d\text{ and }\bfB_{\bfc,1}(t):=\bfA(t)\bfB(t)^{d-1},
\end{equation}
while if $\bfc(0)=0$, then
\begin{equation}
\label{1st iterate zero}
\bfA_{\bfc,1}(t):=\frac{\bfA(t)^d+t \bfB(t)^d}{t}\text{ and }\bfB_{\bfc,1}(t):=\frac{\bfA(t)\bfB(t)^{d-1}}{t}.
\end{equation}
Then for each positive integer $n$ we let
\begin{equation}
\label{general iterate defi}
\bfA_{\bfc,n+1}(t):=\bfA_{\bfc,n}(t)^d+t\cdot \bfB_{\bfc, n}(t)^d\text{ and }\bfB_{\bfc, n+1}(t):=\bfA_{\bfc, n}(t)\cdot \bfB_{\bfc, n}(t)^{d-1}.
\end{equation}
Whenever it is clear from the context, we will use $\bfA_n$ and $\bfB_n$ instead of $\bfA_{\bfc, n}$ and $\bfB_{\bfc, n}$ respectively. For each $t=\l\in\Qbar$, the canonical height of $\bfc(\l)$ under the action of $\bff_\l$ may be computed as follows:
$$\hhat_{\bff_\l}(\bfc(\l))=\lim_{n\to\infty}\frac{h\left([\bfA_{\bfc,n}(\l): \bfB_{\bfc,n}(\l)]\right)}{d^n}.$$
Also, for each place $v$, we define the local canonical height of $\bfc(\l)$ at $v$ as follows:
\begin{equation}
\label{defi local canonical height}
\hhat_{\bff_\l,v}(\bfc(\l)) = \lim_{n\to\infty} \frac{\log\max\{\left|\bfA_{\bfc,n}(\l)\right|_v, \left|\bfB_{\bfc,n}(\l)\right|_v\}}{d^n}.
\end{equation}
The limit in \eqref{defi local canonical height} exists, as proven in Corollary~\ref{the limit exists}.

The following is a simple observation based on \eqref{conversion k_0}: if $\l\in\Qbar$ such that $\bfB_{\bfc, k_0}(\l)\ne 0$, then for each $k_0,n\ge 0$ we have
\begin{equation}
\label{conversion}
\bfA_{\bfc, n+k_0}(\l)=\bfB_{\bfc, k_0}(\l)^{d^n}\cdot A_{\l, f^{k_0}_\l(\bfc(\l)), n}\text{ and }\bfB_{\bfc, n+k_0}(\l)=\bfB_{\bfc, k_0}(\l)^{d^n}\cdot B_{\l,f^{k_0}_\l(\bfc(\l)), n}.
\end{equation}

\subsubsection{Function fields}

We also compute the canonical height of $\bfc(t)$ on the generic fiber of the family of rational maps $\bff$ with respect to the action of $\bff_t(z)=\frac{z^d+t}{z}\in \Q(t)(z)$ as follows
$$\hhat_{\bff}(\bfc):=\hhat_{\bff_t}(\bfc(t)):=\lim_{n\to\infty} \frac{h(\bff_t^n(\bfc(t)))}{d^n} =\lim_{n\to\infty} \frac{\deg_t(\bff_t^n(\bfc(t)))}{d^n}.$$


\section{Canonical height on the generic fiber}
\label{generic fiber section}

For each $n\ge 0$, the map $t\lra \bff_t^n(\bfc(t))$ is a rational map; so, $\deg(\bff^n_t(\bfc(t)))$ will always denote its degree. Similarly, letting $\bff(z):=\frac{z^d+t}{z}\in\qbar(t)(z)$ and $\bfc(t):=\frac{\bfA(t)}{\bfB(t)}$ for coprime polynomials $\bfA,\bfB\in\Qbar[t]$, then $\bff^n(\bfc(t))$ is a rational function for each $n\ge 0$. In this section we compute $\hhat_\bff(\bfc)$. It is easier to split the proof into two cases depending on whether $\bfc(0)=0$ (or equivalently $\bfA(0)=0$) or not. 

\begin{prop}
\label{canonical height generic nonzero}
If $\bfc(0)\ne 0$, then 
$$\hhat_\bff(\bfc)=\frac{\deg(\bff_t(\bfc(t)))}{d}= \frac{\deg(\bff^2_t(\bfc(t)))}{d^2}.$$
\end{prop}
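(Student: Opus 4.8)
The plan is to compute the degree of the rational map $t \mapsto \bff_t(\bfc(t))$ directly and then argue that passing to higher iterates multiplies the degree by exactly $d$ each time, so that the limit defining $\hhat_\bff(\bfc)$ stabilizes already at the first (or second) iterate. First I would write $\bfc(t) = \bfA(t)/\bfB(t)$ with $\bfA, \bfB \in \Qbar[t]$ coprime, and use the formulas \eqref{1st iterate nonzero} for the iterates: since $\bfc(0) \ne 0$ we have $\bfA(0) \ne 0$, so $\bfA_{\bfc,1}(t) = \bfA(t)^d + t\bfB(t)^d$ and $\bfB_{\bfc,1}(t) = \bfA(t)\bfB(t)^{d-1}$. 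The key point is that these two polynomials are coprime: any common root $\alpha$ would be a root of $\bfA(t)\bfB(t)^{d-1}$, hence a root of $\bfA$ or of $\bfB$; if $\alpha$ is a root of $\bfB$ then from $\bfA_{\bfc,1}(\alpha) = \bfA(\alpha)^d = 0$ we would get $\bfA(\alpha)=0$, contradicting $\gcd(\bfA,\bfB)=1$; if $\alpha$ is a root of $\bfA$ then $\bfA_{\bfc,1}(\alpha) = \alpha \bfB(\alpha)^d$, which forces $\alpha = 0$ or $\bfB(\alpha)=0$, but $\bfB(\alpha)=0$ again contradicts coprimality and $\alpha=0$ is excluded because $\bfA(0)\ne 0$. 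Hence $\bfA_{\bfc,1}$ and $\bfB_{\bfc,1}$ share no common factor, so
\[
\deg(\bff_t(\bfc(t))) = \max\{\deg \bfA_{\bfc,1}, \deg \bfB_{\bfc,1}\}.
\]

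Next I would observe that the same coprimality argument propagates to all later iterates: using the recursion \eqref{general iterate defi}, $\bfA_{\bfc,n+1} = \bfA_{\bfc,n}^d + t \bfB_{\bfc,n}^d$ and $\bfB_{\bfc,n+1} = \bfA_{\bfc,n}\bfB_{\bfc,n}^{d-1}$, and assuming inductively that $\bfA_{\bfc,n}, \bfB_{\bfc,n}$ are coprime with $\bfA_{\bfc,n}(0)\ne 0$ (the latter needs to be checked, but it follows since $\bfA_{\bfc,n}(0) = \bfA_{\bfc,n-1}(0)^d \ne 0$ by induction from $\bfA_{\bfc,1}(0) = \bfA(0)^d \ne 0$), the same case analysis shows $\gcd(\bfA_{\bfc,n+1}, \bfB_{\bfc,n+1}) = 1$. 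Therefore $\deg(\bff_t^n(\bfc(t))) = \max\{\deg \bfA_{\bfc,n}, \deg \bfB_{\bfc,n}\}$ for every $n$. Now I want to pin down how $\max\{\deg \bfA_{\bfc,n}, \deg \bfB_{\bfc,n}\}$ grows. Write $a_n = \deg \bfA_{\bfc,n}$ and $b_n = \deg \bfB_{\bfc,n}$, and let $m_n = \max\{a_n, b_n\}$. From the recursion, $a_{n+1} = \max\{d\,a_n, \, 1 + d\,b_n\}$ provided no cancellation occurs in $\bfA_{\bfc,n}^d + t\bfB_{\bfc,n}^d$ among the top-degree terms, and $b_{n+1} = a_n + (d-1)b_n$. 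The plan is to show $m_{n+1} = d\, m_n$, which combined with the formula for the canonical height as a limit of $m_n/d^n$ gives the result $\hhat_\bff(\bfc) = m_1/d = \deg(\bff_t(\bfc(t)))/d$, and by the same token $= m_2/d^2 = \deg(\bff_t^2(\bfc(t)))/d^2$.

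The main obstacle is controlling possible degree cancellation in $\bfA_{\bfc,n}^d + t\bfB_{\bfc,n}^d$ and, relatedly, settling whether $m_n = a_n$ or $m_n = b_n$ (i.e. which of the two coordinates dominates). I would handle this by a more careful bookkeeping: if $a_n > b_n$, then the leading term of $\bfA_{\bfc,n}^d$ has degree $d\,a_n$ while $t\bfB_{\bfc,n}^d$ has degree $1 + d\,b_n < d\,a_n$ (using $a_n \ge b_n + 1$, so $d\,a_n \ge d\,b_n + d > d\,b_n + 1$ as $d \ge 2$), hence $a_{n+1} = d\,a_n$; meanwhile $b_{n+1} = a_n + (d-1)b_n < d\,a_n$, so $m_{n+1} = a_{n+1} = d\,a_n = d\,m_n$ and $a_{n+1} > b_{n+1}$, so the dominance persists. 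If instead $a_n \le b_n$, a parallel computation shows $b_{n+1} = a_n + (d-1)b_n \le d\,b_n$ with equality iff $a_n = b_n$, and $a_{n+1} = \max\{d\,a_n, 1+d\,b_n\}$; here one must also verify that when $a_n = b_n$ the top-degree terms of $\bfA_{\bfc,n}^d + t\bfB_{\bfc,n}^d$ do not cancel down past degree $d\,m_n$ — but the presence of the factor $t$ on only one of the two terms shifts their degrees apart (one has degree $d\,a_n$, the other $1 + d\,b_n = 1 + d\,a_n$), so no cancellation of the top term is possible and $a_{n+1} = 1 + d\,a_n > d\,a_n = b_{n+1}$, giving $m_{n+1} = 1 + d\,a_n$. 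This last case would seem to break $m_{n+1} = d\,m_n$, so I expect that in fact the case $a_n = b_n$ can only occur for $n = 0$ (and one then checks the $n=0 \to n=1$ step by hand using \eqref{1st iterate nonzero}, where the extra $+t\bfB^d$ term again has its degree shifted by $1$), after which $a_n > b_n$ holds for all $n \ge 1$ and the clean recursion $m_{n+1} = d\,m_n$ takes over. Once that dichotomy is established, $m_n/d^n$ is eventually constant and equal to $m_1/d$, and also to $m_2/d^2$, completing the proof.
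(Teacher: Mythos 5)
Your proposal follows essentially the same approach as the paper's proof: establish by induction that $\bfA_{\bfc,n}$ and $\bfB_{\bfc,n}$ remain coprime (using $\bfA(0)\ne 0$ to rule out the root $\alpha=0$) and that $\deg\bfA_{\bfc,n}>\deg\bfB_{\bfc,n}$ for all $n\ge 1$, so that $\deg(\bff_t^n(\bfc(t)))=\deg\bfA_{\bfc,n}$ grows by a factor of exactly $d$ once $n\ge 1$. Your bookkeeping around potential top-degree cancellation and the $a_n=b_n$ case is a bit more elaborate than the paper's terser induction, but the underlying argument is the same.
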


\begin{proof}
According to \eqref{1st iterate nonzero} and \eqref{general iterate defi} we have defined  $\bfA_{\bfc,n}(t)$ and $\bfB_{\bfc,n}(t)$ in this case. 
It is easy to prove that $\deg(\bfA_n)>\deg(\bfB_n)$ for all positive integers $n$. Indeed, if $\deg(\bfA)> \deg(\bfB)$, then an easy induction yields that $\deg(\bfA_n)> \deg(\bfB_n)$ for all $n\ge 0$. If $\deg(\bfA)\le \deg(\bfB)$, then $\deg(\bfA_1)=1+d\cdot \deg(\bfB)> d\cdot \deg(\bfB)\ge \deg(\bfB_1)$. Again an easy induction finishes the proof that $\deg(\bfA_n)>\deg(\bfB_n)$ for all $n\ge 1$.

In particular, we get that $\deg(\bfA_n)=d^{n-1}\cdot \deg(\bfA_1)$ for all $n\ge 1$. The following claim will finish our proof.  
\begin{claim}
\label{A and B coprime}
For each $n\ge 0$, $\bfA_n$ and $\bfB_n$ are coprime.
\end{claim}

\begin{proof}[Proof of Claim~\ref{A and B coprime}.]
The statement is true for $n=0$ by definition. Assume now that it holds for all $n\le N$ and we'll show that $\bfA_{N+1}$ and $\bfB_{N+1}$ are coprime. 

Assume there exists $\alpha\in\Qbar$ such that the polynomial $t-\alpha$ divides both $\bfA_{N+1}(t)$ and $\bfB_{N+1}(t)$. First we claim that $\alpha\ne 0$. Indeed, if $t$ would divide $\bfA_{N+1}$, then it would also divide $\bfA_N$ and inductively we would get that $t\mid \bfA_0(t)=\bfA(t)$, which is a contradiction since $A(0)\ne 0$. So, indeed $\alpha\ne 0$. But then from the fact that both $\bfA_{N+1}(\alpha)=0=\bfB_{N+1}(\alpha)$ (and $\alpha\ne 0$) we obtain from the recursive formula defining $\{\bfA_n\}_n$ and $\{\bfB_n\}_n$ that also $\bfA_N(\alpha)=0$ and $\bfB_N(\alpha)=0$. However this contradicts the assumption that $\bfA_N$ and $\bfB_N$ are coprime. Thus $\bfA_n$ and $\bfB_n$ are coprime for all $n\ge 0$. 
\end{proof}

Using the definition of $\hhat_\bff(\bfc)$ we conclude the proof of Proposition~\ref{canonical height generic nonzero}.
\end{proof}

If $\bfc(0)=0$ (or equivalently $\bfA(0)=0$) the proof is very similar, only that this time we use \eqref{1st iterate zero} to define $\bfA_1$ and $\bfB_1$.
\begin{prop}
\label{canonical height generic zero}
If $\bfc(0)= 0$, then 
$$\hhat_\bff(\bfc)=\frac{\deg(\bff_t(\bfc(t)))}{d}= \frac{\deg(\bff^2_t(\bfc(t)))}{d^2}.$$
\end{prop}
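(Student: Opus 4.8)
The plan is to imitate the proof of Proposition~\ref{canonical height generic nonzero}, the only change being that the first iterate is now normalized by \eqref{1st iterate zero} instead of \eqref{1st iterate nonzero}. First I would check that \eqref{1st iterate zero} really produces polynomials and that $\bfA_1(0)\ne 0$. Since $\bfA(0)=0$ and $\bfA,\bfB$ are coprime, we have $t\mid\bfA$ and $\bfB(0)\ne 0$; hence $t$ divides both $\bfA^d+t\bfB^d$ and $\bfA\bfB^{d-1}$, so $\bfA_1,\bfB_1\in K[t]$. Moreover $t^d\mid\bfA^d$, so $t^{d-1}$ divides $\bfA^d/t$, while $(t\bfB^d)/t=\bfB^d$; evaluating at $t=0$ gives $\bfA_1(0)=\bfB(0)^d\ne 0$. (The degenerate case $\bfc\equiv 0$, for which $0$ maps to the fixed point $\infty$ and every quantity in the statement equals $0$, is checked directly and set aside.)

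Next I would establish that $\deg\bfA_n>\deg\bfB_n$ for all $n\ge 1$. The base case $n=1$ is a short computation from \eqref{1st iterate zero}: writing $m=\deg\bfA$ and $r=\deg\bfB$ and using $d\ge 2$ (so that $\bfA^d$ and $t\bfB^d$ have distinct degrees $dm$ and $1+dr$, as $d(m-r)\ne 1$, ruling out cancellation of leading terms), one gets $\deg\bfA_1=\max(dm,1+dr)-1$ and $\deg\bfB_1=m+(d-1)r-1$, whence $\deg\bfA_1-\deg\bfB_1$ equals $(d-1)(m-r)$ when $m>r$ and $(r-m)+1$ when $m\le r$; in either case it is positive. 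The inductive step $n\to n+1$ through \eqref{general iterate defi} is word for word the one in Proposition~\ref{canonical height generic nonzero} and it preserves strictness, since $\deg\bfA_{n+1}-\deg\bfB_{n+1}=(d-1)(\deg\bfA_n-\deg\bfB_n)$. Consequently $\deg\bfA_n=d^{n-1}\deg\bfA_1$ for every $n\ge 1$.

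I would then prove the analogue of Claim~\ref{A and B coprime}: $\bfA_n$ and $\bfB_n$ are coprime for all $n\ge 0$. The case $n=0$ is the hypothesis. For $n=1$, if $t-\alpha$ divides both $\bfA_1$ and $\bfB_1$, then $\alpha\ne 0$ because $\bfA_1(0)\ne 0$; multiplying the defining relations by $\alpha$ gives $\bfA(\alpha)^d+\alpha\bfB(\alpha)^d=0$ and $\bfA(\alpha)\bfB(\alpha)^{d-1}=0$, which force $\bfA(\alpha)=\bfB(\alpha)=0$, contradicting the coprimality of $\bfA$ and $\bfB$. For the step $n\to n+1$ with $n\ge 1$ the argument of Claim~\ref{A and B coprime} applies unchanged, the one adjustment being that, to rule out $\alpha=0$, one descends the divisibility $t\mid\bfA_k$ only down to $k=1$ and invokes $\bfA_1(0)\ne 0$ rather than $\bfA(0)\ne 0$.

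Finally, coprimality together with $\deg\bfA_n>\deg\bfB_n$ yields $\deg(\bff_t^n(\bfc(t)))=\deg\bfA_n=d^{n-1}\deg\bfA_1$ for $n\ge 1$, and hence
$$\hhat_\bff(\bfc)=\lim_{n\to\infty}\frac{\deg(\bff_t^n(\bfc(t)))}{d^n}=\frac{\deg\bfA_1}{d}=\frac{\deg(\bff_t(\bfc(t)))}{d}=\frac{\deg(\bff_t^2(\bfc(t)))}{d^2}.$$
I do not expect any real obstacle: the whole content lies in the two base cases $n=1$ (for the degree inequality and for coprimality), which are exactly where the single cancelled factor of $t$ coming from \eqref{1st iterate zero} has to be tracked; everything else is a verbatim repetition of Proposition~\ref{canonical height generic nonzero}. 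One could alternatively deduce the result by applying Proposition~\ref{canonical height generic nonzero} to $\bff_t(\bfc(t))=\bfA_1/\bfB_1$ (whose numerator does not vanish at $0$) together with $\hhat_\bff(\bfc)=\tfrac1d\hhat_\bff(\bff_t(\bfc(t)))$, but the direct imitation above is just as short.
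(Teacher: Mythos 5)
Your proof is correct and follows essentially the same route as the paper's: establish that $\bfA_1,\bfB_1$ are polynomials with $\bfA_1(0)\ne 0$, prove coprimality and the degree inequality $\deg\bfA_n>\deg\bfB_n$ by induction from $n=1$, and read off the limit. You fill in several details (the explicit value $\bfA_1(0)=\bfB(0)^d$, the degree computation at $n=1$) that the paper leaves implicit by appealing to Proposition~\ref{canonical height generic nonzero} and Claim~\ref{A and B coprime}.
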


\begin{proof}
Since $t\mid \bfA(t)$ we obtain that $\bfA_1,\bfB_1\in\Qbar[t]$; moreover, they are coprime because $\bfA$ and $\bfB$ are coprime. Indeed, $t$ does not divide $\bfB(t)$ and so, because $t$ divides $\bfA(t)$ and $d\ge 2$, we conclude that $t$ does not divide $\bfA_1(t)$. Now, if there exists some $\alpha\in\Qbar^*$ such that both $\bfA_1(\alpha)=\bfB_1(\alpha)=0$, then we obtain that also both $\bfA(\alpha)=\bfB(\alpha)=0$, which is a contradiction. 

Using that $\bfA_1$ and $\bfB_1$ are coprime, and also that $t\nmid \bfA_1$, the same reasoning as in the proof of Claim~\ref{A and B coprime} yields that $\bfA_n$ and $\bfB_n$ are coprime for each $n\ge 1$. 

Also, arguing as in the proof of Proposition~\ref{canonical height generic nonzero}, we obtain that $\deg(\bfA_n)> \deg(\bfB_n)$ for all $n\ge 1$. Hence, $$\deg(\bff_t^n(\bfc(t)))=\deg_t(\bfA_n(t))=d^{n-2}\cdot \deg(\bff_t^2(\bfc(t)))= d^{n-1}\cdot \deg(\bff_t(\bfc(t))),$$
as desired.
\end{proof}


\section{Reductions}
\label{reductions section}

With the above notation, Theorem~\ref{variation of canonical height} is equivalent with showing that
\begin{equation}
\label{restatement}
\lim_{n\to\infty} \frac{h([\bfA_{\bfc,n}(\l):\bfB_{\bfc,n}(\l)])}{d^n} = \hhat_{\bff}(\bfc)\cdot h(\l)+O_\bfc(1).
\end{equation}

In all of our arguments we assume $\l\ne 0$, and also that $\bfA$ and $\bfB$ are not identically equal to $0$ (where $\bfc=\bfA/\bfB$ with $\bfA,\bfB\in \Qbar[t]$ coprime). Obviously excluding the case $\l=0$ does not affect the validity of Theorem~\ref{variation of canonical height} (the quantity $\hhat_{\bff_0}(\bfc(0))$ can be absorbed into the $O(1)$-constant). In particular, if $\l\ne 0$ then the definition of $\bfA_{\bfc,1}$ and $\bfB_{\bfc,1}$ (when $\bfc(0)=0$) makes sense (i.e. we are allowed to divide by $\l$). Also, if $\bfA$ or $\bfB$ equal $0$ identically, then $\bfc(\l)$ is preperiodic for $\bff_\l$ for all $\l$ and then again Theorem~\ref{variation of canonical height} holds trivially.

\begin{prop}
\label{almost all places are fine}
Let $\l\in\Qbar^*$. Then for all but finitely many $v\in\Omega_\Q$, we have $\log\max\{|\bfA_{\bfc ,n}(\l)|_v, |\bfB_{\bfc ,n}(\l)|_v\}=0$ for all $n\in\bN$.
\end{prop}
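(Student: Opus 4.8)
The plan is to reduce to a finite set of ``good'' places and then run a clean induction on $n$ using the homogeneous recursion \eqref{general iterate defi}. First I would enlarge $K$ so that $\l\in K$ and $\bfA,\bfB\in K[t]$; since as $v$ ranges over $\Omega_\Q$ the restriction of $|\cdot|_v$ to $K$ lands in any fixed finite set of places of $K$ for only finitely many $v$, it suffices to prove the vanishing for all but finitely many nonarchimedean places of $K$. Because $\bfA$ and $\bfB$ are coprime in $K[t]$, there is a B\'ezout-type identity $\bfA(t)U(t)+\bfB(t)V(t)=\Res(\bfA,\bfB)\neq 0$ with $U,V\in K[t]$ whose coefficients are integer-polynomial expressions in those of $\bfA$ and $\bfB$ (cofactors of the Sylvester matrix). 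Let $S$ be the finite set of nonarchimedean $v$ for which at least one of the following fails: $|\l|_v=1$; all coefficients of $\bfA,\bfB,U,V$ lie in $\fO_v$; $|\Res(\bfA,\bfB)|_v=1$. I will fix $v\notin S$ and prove $\max\{|\bfA_{\bfc,n}(\l)|_v,|\bfB_{\bfc,n}(\l)|_v\}=1$ for every $n\ge0$ by induction on $n$.

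For the base case $n=0$ the relevant point is $[\bfA(\l):\bfB(\l)]$: integrality of the coefficients and $|\l|_v\le1$ give $|\bfA(\l)|_v,|\bfB(\l)|_v\le1$, while evaluating the B\'ezout identity at $t=\l$ (and using $|U(\l)|_v,|V(\l)|_v\le1$) forces $\max\{|\bfA(\l)|_v,|\bfB(\l)|_v\}\ge|\Res(\bfA,\bfB)|_v=1$. For the inductive step, write $a=\bfA_{\bfc,n}(\l)$ and $b=\bfB_{\bfc,n}(\l)$, so $|a|_v,|b|_v\le1$ with at least one of them equal to $1$. By \eqref{general iterate defi} — and, in the exceptional case $\bfc(0)=0$, $n=0$, by \eqref{1st iterate zero} together with $|\l|_v=1$, which makes the division by $\l$ harmless — one has $|\bfA_{\bfc,n+1}(\l)|_v=|a^d+\l b^d|_v$ and $|\bfB_{\bfc,n+1}(\l)|_v=|a|_v\,|b|_v^{d-1}$; both are $\le1$ since $|\l|_v=1$. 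If both were $<1$, then $|a|_v\,|b|_v^{d-1}<1$ would force exactly one of $|a|_v,|b|_v$ to be $<1$ (the other being $1$, since their max is $1$); in either sub-case the two terms $a^d$ and $\l b^d$ have distinct absolute values, so the ultrametric inequality gives $|a^d+\l b^d|_v=1$, contradicting $|\bfA_{\bfc,n+1}(\l)|_v<1$. This closes the induction, and since $S$ is finite we are done.

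Conceptually this is just the statement that $\bff_\l$ has \emph{good reduction} at $v$ whenever $|\l|_v=1$ and that a good-reduction map sends points with unit (coprime) homogeneous coordinates to points with unit homogeneous coordinates; I would nonetheless phrase it concretely through the recursion, to keep the argument self-contained and compatible with the notation already in place. I do not anticipate a real obstacle here: the only points needing a little care are the bookkeeping of the exceptional set $S$ — in particular ensuring the cofactors $U,V$ from the resultant identity are $v$-integral and that $|\Res(\bfA,\bfB)|_v=1$ — and the harmless division by $\l$ in \eqref{1st iterate zero} when $\bfc(0)=0$, which is absorbed by $|\l|_v=1$.
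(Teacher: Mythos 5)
Your proof is correct and uses the same inductive mechanism as the paper: at a nonarchimedean $v$ with $|\l|_v=1$, the ultrametric inequality preserves $\max\{|A_n|_v,|B_n|_v\}=1$ under the recursion (if both equal $1$ then $|B_{n+1}|_v=1$; if exactly one does, the two terms $A_n^d$ and $\l B_n^d$ have distinct $v$-adic sizes, so $|A_{n+1}|_v=1$). The only difference is how the base case is anchored: you verify $n=0$ via the resultant/B\'ezout identity and then carry the $\bfc(0)=0$ division-by-$\l$ through the $n=0\to 1$ step, whereas the paper simply notes that $A_1,B_1$ are not both zero (since $\l\ne 0$ and $\bfA,\bfB$ are coprime), defines $S$ by the conditions $\max\{|A_1|_v,|B_1|_v\}=1$ and $|\l|_v=1$, and starts the induction at $n=1$, avoiding that special step entirely.
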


\begin{proof}
First of all, for the sake of simplifying our notation (and noting that $\bfc$ and $\l$ are fixed in this Proposition), we let $A_n:=\bfA_{\bfc,n}(\l)$ and $B_n:=\bfB_{\bfc, n}(\l)$. 

From the definition of $A_1$ and $B_1$ we see that not both are equal to $0$ (here we use also the fact that $\l\ne 0$ which yields that if both $A_1$ and $B_1$ are equal to $0$ then $\bfA(\l)=\bfB(\l)=0$, and this contradicts the fact that $\bfA$ and $\bfB$ are coprime).  
Let $S$ be the set of all non-archimedean places $v\in\Omega_\Q$ such that $|\l|_v=1$ and also $\max\{|A_1|_v, |B_1|_v\}=1$. Since  not both $A_1$ and $B_1$ equal $0$ (and also $\l\ne 0$), then all but finitely many non-archimedean places $v$ satisfy the above conditions. 

\begin{claim}
\label{useful claim}
If $v\in S$, then $\max\{|A_n|_v, |B_n|_v\}=1$ for all $n\in\bN$. 
\end{claim}

\begin{proof}[Proof of Claim~\ref{useful claim}.]
This claim follows easily by induction on $n$; the case $n=1$ follows by the definition of $S$. Since 
$$\max\{|A_n|_v,|B_n|_v\}=1$$ 
and $|\l|_v= 1$ 
then $\max\{|A_{n+1}|_v, |B_{n+1}|_v\}\le 1$. Now, if $|A_n|_v=|B_n|_v=1$ then $|B_{n+1}|_v=1$. On the other hand, if $\max\{|A_n|_v,|B_n|_v\}=1>\min\{|A_n|_v, |B_n|_v\}$, then $|A_{n+1}|_v=1$ (because $|\l|_v=1$).
\end{proof}
Claim~\ref{useful claim} finishes the proof of Proposition~\ref{almost all places are fine}.
\end{proof}

We let $K$ be the finite extension of $\Q$ obtained by adjoining the coefficients of both $\bfA$ and $\bfB$ (we recall that $\bfc(t)=\bfA(t)/\bfB(t)$). Then $\bfA_n(\l):=\bfA_{\bfc, n}(\l)\text{, }\bfB_n(\l):=\bfB_{\bfc, n}(\l)\in K(\l)$ for each $n$ and for each $\l$. 
Proposition~\ref{almost all places are fine} allows us to invert the limit from the left-hand side of \eqref{restatement} with the following sum
$$h([\bfA_n(\l):\bfB_n(\l)])=\frac{1}{[K(\l):\Q]}\cdot \sum_{\sigma:K(\l)\lra \Qbar}\sum_{v\in\Omega_\Q} \log\max\{|\sigma(\bfA_n(\l))|_v, |\sigma(\bfB_n(\l))|_v\},$$
because for all but finitely many places $v$, we have $\log\max\{|\sigma(\bfA_n(\l))|_v, |\sigma(\bfB_n(\l))|_v\}=0$. Also we note that $\sigma(\bfA_{\bfc,n}(\l))=\bfA_{\bfc^\sigma,n}(\sigma(\l))$ and $\sigma(\bfB_{\bfc ,n}(\l))=\bfB_{\bfc^\sigma,n}(\sigma(\l))$, where $\bfc^\sigma$ is the polynomial whose coefficients are obtained by applying the homomorphism $\sigma\in\Gal(\Qbar/\Q)$ to each coefficient of $\bfc$.  Using the definition of the local canonical height from \eqref{defi local canonical height}, we observe that \eqref{restatement} is equivalent with showing that
\begin{equation}
\label{restatement 2}
 \frac{1}{[K(\l):\Q]} \sum_{v\in\Omega_\Q}\sum_{\sigma:K(\l)\lra \Qbar}  \hhat_{\bff_{\sigma(\l)},v}(\bfc^{\sigma}(\sigma(\l))) = \hhat_\bff(\bfc) h(\l)+O_\bfc(1).
\end{equation}
For each $v\in\Omega_\Q$, and each $n\ge 0$ we let $M_{\bfc,n,v}(\l):=\max\{|\bfA_{\bfc,n}(\l)|_v, |\bfB_{\bfc,n}(\l)|_v\}$. When $\bfc$ is fixed, we will use the notation $M_{n,v}(\l):=M_{\bfc,n,v}(\l)$; if $\l$ is fixed then we will use   the notation $M_{n,v}:=M_{n,v}(\l)$. If $v$ is also fixed, we will use the notation $M_{n}:=M_{n,v}$.

\begin{prop}
\label{equality for almost all non-archimedean places}
Let $v\in\Omega_\Q$ be a non-archimedean place such that
\begin{enumerate}
\item[(i)] each coefficient of $\bfA$ and of $\bfB$ are $v$-adic integers; 
\item[(ii)] the resultant of the polynomials $\bfA$ and $\bfB$, and the leading coefficients of both $\bfA$ and of $\bfB$ are $v$-adic units; and 
\item[(iii)] if the constant coefficient $a_0$ of $\bfA$ is nonzero, then $a_0$ is a $v$-adic unit. 
\end{enumerate}
Then for each $\l\in\Qbar^*$ we have $\frac{\log M_{\bfc,n,v}(\l)}{d^n} =\frac{\log M_{\bfc,1,v}(\l)}{d}$, for all $n\ge 1$. 
\end{prop}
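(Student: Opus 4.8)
The plan is to fix a place $v$ satisfying (i)--(iii) and to prove the equivalent, sharper statement that $\log M_{\bfc,n,v}(\l)=d^{\,n-1}\log M_{\bfc,1,v}(\l)$ for every $n\ge1$, distinguishing the three regimes $|\l|_v>1$, $|\l|_v=1$, and $|\l|_v<1$. I will use three consequences of the hypotheses throughout. First, by (i) every polynomial $\bfA_{\bfc,n}(t),\bfB_{\bfc,n}(t)$ has coefficients in $\fO_v$ (an immediate induction; in the case $\bfc(0)=0$ one notes $t\mid\bfA$, so $\bfA/t\in\fO_v[t]$ and hence $\bfA_{\bfc,1},\bfB_{\bfc,1}\in\fO_v[t]$). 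Second, using (ii) together with the facts $\deg\bfA_{\bfc,n}>\deg\bfB_{\bfc,n}$ and $\deg\bfA_{\bfc,n}=d^{\,n-1}\deg\bfA_{\bfc,1}$ for $n\ge1$, which are proved inside Propositions~\ref{canonical height generic nonzero} and~\ref{canonical height generic zero}, the leading coefficient of $\bfA_{\bfc,n}$ is a $v$-adic unit for every $n\ge1$ (induction, using $d\ge2$ to preclude a degree tie in the defining formula for $\bfA_{\bfc,1}$). Third, as a preliminary remark, if $\bfA(0)=0$ then (ii) already forces $\bfB(0)$ to be a $v$-adic unit, since then $\Res(\bfA,\bfB)=\pm\,\bfB(0)\cdot\Res(\bfA/t,\bfB)$.

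If $|\l|_v>1$, the top-degree term dominates every evaluation: because the coefficients lie in $\fO_v$ and $\bfA_{\bfc,n}$ has unit leading coefficient, $|\bfA_{\bfc,n}(\l)|_v=|\l|_v^{\deg\bfA_{\bfc,n}}$, whereas $|\bfB_{\bfc,n}(\l)|_v\le|\l|_v^{\deg\bfB_{\bfc,n}}<|\bfA_{\bfc,n}(\l)|_v$ for $n\ge1$. Hence $M_{\bfc,n,v}(\l)=|\l|_v^{\deg\bfA_{\bfc,n}}=|\l|_v^{\,d^{\,n-1}\deg\bfA_{\bfc,1}}$, which is the desired identity.

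If $|\l|_v=1$, I claim $M_{\bfc,n,v}(\l)=1$ for all $n\ge0$, so that both sides vanish. For $n=0$ this is condition (ii): $|\bfA(\l)|_v,|\bfB(\l)|_v\le1$, and they cannot both be $<1$, for then the reduction of $\l$ would be a common root of the reductions of $\bfA$ and $\bfB$, contradicting that $\Res(\bfA,\bfB)$ is a unit. For the inductive step, with $a=\bfA_{\bfc,n}(\l)$, $b=\bfB_{\bfc,n}(\l)$ and $\max\{|a|_v,|b|_v\}=1$, one has $\bfB_{\bfc,n+1}(\l)=ab^{d-1}$ and $\bfA_{\bfc,n+1}(\l)=a^d+\l b^d$; a brief case check (whether $|a|_v=1>|b|_v$, or $|b|_v=1>|a|_v$, or $|a|_v=|b|_v=1$), using $|\l|_v=1$, gives $\max\{|a^d+\l b^d|_v,|ab^{d-1}|_v\}=1$. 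The passage from $n=0$ to $n=1$ when $\bfc(0)=0$ is identical, since there one simply divides the defining formulas by $\l$, which has absolute value $1$.

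The case $|\l|_v<1$ is the heart of the matter, since this is exactly where $\bff_\l$ fails to have good reduction — its reduction degenerates to $z\mapsto z^{d-1}$ — so the functoriality of reduction is unavailable. Here I will instead show directly that $|\bfA_{\bfc,n}(\l)|_v=1$ for every $n\ge1$; since $\bfB_{\bfc,n}\in\fO_v[t]$ and $|\l|_v\le1$ give $|\bfB_{\bfc,n}(\l)|_v\le1$, this yields $M_{\bfc,n,v}(\l)=1$ and both sides again vanish. The inductive step is immediate: from $|\bfA_{\bfc,n}(\l)|_v=1$ and $|\l\bfB_{\bfc,n}(\l)^d|_v\le|\l|_v<1$ one gets $|\bfA_{\bfc,n+1}(\l)|_v=|\bfA_{\bfc,n}(\l)^d+\l\bfB_{\bfc,n}(\l)^d|_v=1$. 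The base case $n=1$ is where hypothesis (iii) and the preliminary remark become indispensable. If $\bfA(0)\ne0$, then $\bfA(0)$ is a unit by (iii), so $|\bfA(\l)|_v=1$ (each lower-order term of $\bfA(\l)$ has absolute value $\le|\l|_v<1$); then in $\bfA_{\bfc,1}(\l)=\bfA(\l)^d+\l\bfB(\l)^d$ the first term has absolute value $1$ and the second $<1$. If $\bfA(0)=0$, then $\bfB(0)$ is a unit, so $|\bfB(\l)|_v=1$; then in $\bfA_{\bfc,1}(\l)=\l^{\,d-1}\bigl(\bfA(\l)/\l\bigr)^d+\bfB(\l)^d$ the second term has absolute value $1$ while the first is $\le|\l|_v^{\,d-1}<1$ since $d\ge2$. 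Either way $|\bfA_{\bfc,1}(\l)|_v=1$, which finishes the induction and the proof. The single genuinely delicate point — and the reason hypothesis (iii) is imposed — is precisely this refusal of $\bfA_{\bfc,1}(\l)$ to become $v$-adically small when $\l$ is $v$-adically close to the bad fiber at $t=0$.
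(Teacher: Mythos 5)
Your proof is correct and follows essentially the same route as the paper's: the same three-way split on $|\l|_v>1$, $=1$, $<1$; the same reduction of the $|\l|_v<1$ case to showing $|\bfA_{\bfc,1}(\l)|_v=1$ via hypothesis (iii) and the good-reduction/resultant argument; and the same ultrametric induction $|\bfA_{\bfc,n+1}(\l)|_v=|\bfA_{\bfc,n}(\l)|_v^d$. The only cosmetic difference is that you make explicit (via the no-degree-tie observation and the $\Res(\bfA,\bfB)=\pm\bfB(0)\Res(\bfA/t,\bfB)$ remark) two small facts that the paper uses tacitly, which slightly tightens the $|\l|_v>1$ case.
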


\begin{remarks}
\label{geometric interpretation of the condition}
\begin{enumerate}
\item[(1)] Since we assumed $\bfA$ and $\bfB$ are nonzero, then conditions (i)-(iii) are satisfied by all but finitely many places $v\in\Omega_\Q$. 
\item[(2)] Conditions (i)-(ii) of Proposition~\ref{equality for almost all non-archimedean places} yield that $\bfc(t)=\bfA(t)/\bfB(t)$ has good reduction at $v$. On the other hand, if $\bfA(t)/t\bfB(t)$ has good reduction at $v$, then condition (iii) must hold. 
\end{enumerate}
\end{remarks}

\begin{proof}
Let $\l\in\Qbar^*$, let $|\cdot |:=|\cdot |_v$, let $A_n:=\bfA_{\bfc,n}(\l)$,  $B_n:=\bfB_{\bfc,n}(\l)$, and $M_n:=\max\{|A_n|, |B_n|\}$.  

Assume first that $|\l|>1$. Using conditions (i)-(ii), then $M_0=|\l|^{\deg(\bfc)}$. If $\bfc$ is nonconstant, then $M_0>1$; furthermore, for each $n\ge 1$ we have $|A_n|>|B_n|$ (because $\deg(\bfA_{\bfc,n}(t))>\deg(\bfB_{\bfc,n}(t))$ for $n\ge 1$), 
and so, $M_n= M_1^{d^{n-1}}$ for all $n\ge 1$. On the other hand, if $\bfc$ is constant, then $|\bfA_1|=|\l|>|\bfB_1|=1$, and then again for each $n\ge 1$ we have $M_n=M_1^{d^{n-1}}$. Hence Proposition~\ref{equality for almost all non-archimedean places} holds when $|\l|>1$.

Assume $|\l|\le 1$. Then it is immediate that $M_n\le 1$ for all $n\ge 0$. On the other hand, because $v$ is a place of good reduction for $\bfc$, we get that $M_0=1$. Then, using that $|\l|=1$ we obtain
$$|\bfA_1(\l)|=|\bfA(\l)^d+\l\bfB(\l)^d|\text{ and }|\bfB_1(\l)|=|\bfA(\l)\bfB(\l)^{d-1}|.$$
Then Claim~\ref{useful claim} yields that $M_n=1$ for all $n\ge 1$, and so Proposition~\ref{equality for almost all non-archimedean places} holds when $|\l|=1$.

Assume now that $|\l|<1$, then either $|\bfA(\l)|=1$ or $|\bfA(\l)|<1$. If the former holds, then first of all we note that $\bfA(0)\ne 0$ since otherwise $|\bfA(\l)|\le |\l|<1$. An easy induction yields that $|A_n|=1$ for all $n\ge 0$ (since $|B_n|\le 1$ and $|\l|<1$). Therefore, $M_n=1$ for all $n\ge 0$. Now if $|\bfA(\l)|<1$, using that $|\l|<1$, we obtain that $a_0=0$. Indeed, if $a_0$ were nonzero, then $|a_0|=1$ by our hypothesis (iii), and thus $|\bfA(\l)|=|a_0|=1$. So, indeed $\bfA(0)=0$, which yields that
\begin{equation}
\label{formula for A_1}
A_1=\frac{\bfA(\l)^d}{\l} + \bfB(\l)^d.
\end{equation}
On the other hand, since $v$ is a place of good reduction for $\bfc$, and $|\bfA(\l)|<1$ we conclude that $|\bfB(\l)|=1$. Thus  \eqref{formula for A_1} yields that $|A_1|=1$ because $d\ge 2$ and $ |\bfA(\l)|\le|\l|<1$. Because for each $n\ge 1$ we have $A_{n+1}=A_n^d+\l\cdot B_n^d$ and $|\l|<1$, while $|B_n|\le 1$, an easy induction yields that $|A_n|=1$ for all $n\ge 1$. 

This concludes the proof of Proposition~\ref{equality for almost all non-archimedean places}. 
\end{proof}

The following result is the key for our proof of Theorem~\ref{variation of canonical height}.
\begin{prop}
\label{each place}
Let $v\in\Omega_\Q$.  
There exists a positive real number $C_{v,\bfc}$ depending only on $v$, and on the coefficients of $\bfA$ and of $\bfB$ (but independent of $\l$) such that 
\begin{align*}
	& \left|\lim_{n\to\infty} \frac{\log\max\{|\bfA_{\bfc,n}(\l)|_v, |\bfB_{\bfc,n}(\l)|_v\}}{d^n} - \frac{\log\max\{|\bfA_{\bfc,2}(\l)|_v, |\bfB_{\bfc,2}(\l)|_v\}}{d^2}\right|\\
& \le C_{v,\bfc},
\end{align*}
for all $\l\in\Qbar^*$ such that $\bfc(\l)\ne 0,\infty$. 
\end{prop}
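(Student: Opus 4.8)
The plan is to fix the place $v$ and reduce everything to a statement about how the local height of the \emph{first} iterate $\bff_\l(\bfc(\l))$ behaves, then compare that against the behavior of the orbit under $\bff_\l$. The key structural observation is formula~\eqref{conversion}: once $\bfB_{\bfc,k_0}(\l)\ne 0$, the tail of the sequence $(\bfA_{\bfc,n}(\l),\bfB_{\bfc,n}(\l))$ agrees, up to the scalar $\bfB_{\bfc,k_0}(\l)^{d^n}$, with the sequence $(A_{\l,f^{k_0}_\l(\bfc(\l)),n},B_{\l,f^{k_0}_\l(\bfc(\l)),n})$ built from the genuine orbit point $f^{k_0}_\l(\bfc(\l))$. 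Dividing by $d^{n+k_0}$ and letting $n\to\infty$, this shows
$$\frac{1}{d^{k_0}}\hhat_{\bff_\l,v}\!\left(\bff^{k_0}_\l(\bfc(\l))\right) = \lim_{n\to\infty}\frac{\log M_{\bfc,n,v}(\l)}{d^n} - \frac{1}{d^{k_0}}\log|\bfB_{\bfc,k_0}(\l)|_v,$$
whenever $\bfB_{\bfc,k_0}(\l)\neq 0$. So it suffices to control, with $k_0=2$, the difference between $d^{-2}\hhat_{\bff_\l,v}(\bff^2_\l(\bfc(\l)))$ and $d^{-2}\log M_{\bfc,2,v}(\l)$; since $\bff^2_\l(\bfc(\l))\in\Qbar$ (we assumed $\bfc(\l)\ne 0,\infty$, so at worst $\bfc(\l)$ or $\bff_\l(\bfc(\l))$ is a pole, handled by a uniformly bounded number of exceptional cases and by clearing denominators), the claim reduces to: for any $x\in\Qbar^*$ lying in the orbit of $\bfc(\l)$, the quantity $\bigl|\hhat_{\bff_\l,v}(x) - \log^+\max\{|x|_v,\dots\}\bigr|$ — more precisely $\bigl|\hhat_{\bff_\l,v}([A:B]) - \log\max\{|A|_v,|B|_v\}\bigr|$ — is bounded by a constant depending only on $v$ and on how far $\l$ can be from the bad locus.

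The core estimate I would isolate as a separate lemma: for the single map $f_\l(z)=\frac{z^d+\l}{z}$ and a point $y=[A:B]\in\bP^1(\Qbar)$,
$$\Bigl|\hhat_{f_\l,v}(y) - \log\max\{|A|_v,|B|_v\}\Bigr| \le c_v + c_v'\cdot\log^+\frac{1}{|\l|_v} + c_v''\cdot\log^+|\l|_v,$$
for absolute constants depending only on whether $v$ is archimedean. This is proved by the standard telescoping argument: one step of the recursion sends $(A,B)\mapsto(A^d+\l B^d, AB^{d-1})$, and one bounds $\bigl|\log\max\{|A_{n+1}|,|B_{n+1}|\} - d\log\max\{|A_n|,|B_n|\}\bigr|$ by a quantity involving $\log^+|\l|_v$ and, crucially when $|\l|_v$ is small, a term that measures how close $[A_n:B_n]$ is to the point $0$ or $\infty$ (the places where the reduced map drops degree). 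Here one exploits the explicit shape of $\bff_t$: if $|\l|_v$ is small then $f_\l$ is $v$-adically close to $z\mapsto z^{d-1}$ away from a small neighborhood of $0$, and one tracks the orbit accordingly. This is exactly the split into cases (i) $|\l|_v$ large, (ii) $|\l|_v$ bounded, (iii) $|\l|_v$ small flagged in the introduction.

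The absolute-value-of-$\l$ terms are then absorbed: for the finitely many places where $v$ is archimedean or where condition (iii) of Proposition~\ref{equality for almost all non-archimedean places} can fail, a bound depending on $v$ and $\bfc$ alone is automatic once we know $\hhat_{\bff_\l,v}$ differs from a Weil-type local height by something controlled by $\log^+|\l|_v$ and $\log^+|\l|_v^{-1}$ — because summing over places these $\log|\l|_v$ contributions reorganize (via the product formula, applied downstream in the proof of Theorem~\ref{variation of canonical height}) but place-by-place here we just need the bound for \emph{one} $v$; when $|\l|_v$ is so large that $|\bfB_{\bfc,2}(\l)|_v,|\bfA_{\bfc,2}(\l)|_v$ are governed by the leading terms, the comparison is exact (this is the content of the $|\l|>1$ analysis in Proposition~\ref{equality for almost all non-archimedean places}), and similarly when $|\l|_v$ is very small one uses that $\bfc(0)$ being $0$ or not forces $|\bfA(\l)|_v$ into a predictable range. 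Thus the only genuinely unbounded-looking contribution, $\log|\bfB_{\bfc,2}(\l)|_v$, is cancelled by the matching term coming from \eqref{conversion}, and everything else is $O_{v,\bfc}(1)$.

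\textbf{Main obstacle.} The hard case is (iii), $|\l|_v$ small, because then $\l=0$ is a bad fiber and $f_\l$ degenerates: a naive telescoping bound on $\bigl|\log M_{n+1}-d\log M_n\bigr|$ blows up when the orbit passes near $z=0$ (where $B_n$ is small), and controlling how often and how closely the orbit of $\bfc(\l)$ approaches $0$ requires using the precise recursion $A_{n+1}=A_n^d+\l B_n^d$, $B_{n+1}=A_nB_n^{d-1}$ rather than any soft argument. I expect the bulk of the work — deferred in this paper to Section~\ref{proofs} and relying on Propositions~\ref{e=0} and \ref{e=0 d=2} applied to the first iterate — to be a careful $v$-adic tracking of the orbit in this regime, with the case $d=2$ requiring separate treatment since then $f_\l$ degenerates to degree $1$ at $\l=0$ and the "$z\mapsto z^{d-1}$" heuristic gives a Möbius map rather than a contraction.
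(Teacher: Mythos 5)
Your outline correctly reproduces the paper's reduction and case split: use \eqref{conversion} with $k_0=1$ or $k_0=2$ to convert the problem to a statement about the genuine orbit of a point $\alpha$ under a fixed $f_\l$, then split on $|\l|_v$ large/medium/small, with the small case being the hard one (and $d=2$ requiring its own argument throughout). That is the paper's structure.

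However, there is a real gap in the middle of your argument. The ``core estimate'' you isolate,
$\bigl|\hhat_{f_\l,v}(y)-\log\max\{|A|_v,|B|_v\}\bigr|\le c_v+c_v'\log^+(1/|\l|_v)+c_v''\log^+|\l|_v$,
cannot by itself yield Proposition~\ref{each place}, because the proposition demands a bound \emph{uniform in $\l$} at a single fixed place $v$. A telescoping bound of the type you describe genuinely does carry a $\log(1/|\l|_v)$ term when $|\l|_v$ is small (via Lemma~\ref{lower bound in terms of lambda}, one only gets $M_{n+1,v}\ge\frac{|\l|_v}{2}M_{n,v}^d$), and that term blows up as $|\l|_v\to 0$. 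Your suggested fix — that these contributions ``reorganize via the product formula, applied downstream'' — does not apply here: the product formula acts after summing over all places, whereas this proposition is place-by-place, as you yourself then note. The ``predictable range'' remark for $|\bfA(\l)|_v$ near $\l=0$ is correct in spirit (it pins down $|\alpha|_v$, depending on whether $\bfc(0)$ is $0$, $\infty$, or neither, with a further split on $\bfA'(0)$), but that alone does not kill the $\log(1/|\l|_v)$ factor.

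The missing idea is the quantitative mechanism in the paper's Lemmas~\ref{no n_1} and \ref{no n_1 2}: one shows that the first index $n_1$ at which the orbit gets $v$-adically close to $0$ (i.e., $|f_\l^{n_1}(\alpha)|_v<\sqrt[d]{2|\l|_v}$) satisfies a lower bound of the form $\log(1/|\l|_v)\ll (d-1)^{n_1}\log(2L)$, where $L$ is controlled by $|\alpha|_v$. Before time $n_1$ the one-step telescoping error is $O(\log(2L))$ with a geometric weight; after time $n_1$ the error per step is $O(\log(1/|\l|_v))$, but this is multiplied by $1/d^{n_1}$ and the lower bound on $n_1$ converts it to $O(\log(2L))$. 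This comparison between $\log(1/|\l|_v)$ and $d^{n_1}$ is the heart of case (iii) and is not present in your sketch. The remaining cases (large $|\l|_v$, medium $|\l|_v$, and small $|\l|_v$ when $\alpha$ is itself large because $\bfc(0)=\infty$ or $\bfA(0)=\bfA'(0)=0$) require their own estimates — Propositions~\ref{large lambda archimedean}, \ref{large lambda non-archimedean}, \ref{bounded lambda}, \ref{really small lambda}, \ref{d=2 really small lambda} — which your sketch identifies only at a high level.
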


Propositions~\ref{equality for almost all non-archimedean places} and \ref{each place} yield Theorem~\ref{variation of canonical height}.

\begin{proof}[Proof of Theorem~\ref{variation of canonical height}.]
First of all we deal with the case that either $\bfA$ or $\bfB$ is the zero polynomial, i.e. $\bfc=0$ or $\bfc=\infty$ identically. In both cases, we obtain that $\bfB_{\bfc,n}=0$ for all $n\ge 1$, i.e., $\bfc$ is preperiodic for $\bff$ being always mapped to $\infty$. Then the conclusion of Theorem~\ref{variation of canonical height} holds trivially since $\hhat_{\bff_\l}(\bfc(\l))=0=\hhat_\bff(\bfc)$.

Secondly, assuming that both $\bfA$ and $\bfB$ are nonzero polynomials, we deal with the values of $\l$ excluded from the conclusion of Proposition~\ref{each place}. Since there are finitely many $\l\in\Qbar$ such that either $\l=0$ or $\bfA(\l)=0$ or $\bfB(\l)=0$ we see that the conclusion of Theorem~\ref{variation of canonical height} is not affected by these finitely many values of the parameter $\l$; the difference between $\hhat_{\bff_\l}(\bfc(\l))$ and $\hhat_\bff(\bfc)\cdot h(\l)$ can be absorbed in $O(1)$ for those finitely many values of $\l$. So, from now on we assume that $\l\in\Qbar^*$ such that $\bfc(\l)\ne 0,\infty$.

For each $\sigma\in\Gal(\Qbar/\Q)$ let $S_{\bfc^\sigma}$ be the finite set of places $v\in\Omega_\Q$ such that either $v$ is archimedean, or $v$ does not satisfy the hypothesis of Proposition~\ref{equality for almost all non-archimedean places} with respect to  $\bfc^\sigma$. Let $S=\bigcup S_{\bfc^\sigma}$, and let $C$ be the maximum of all constants $C_{v,\bfc^\sigma}$ (from Proposition~\ref{each place}) over all $v\in S$ and all $\sigma\in\Gal(\Qbar/\Q)$.  Thus from Propositions~\ref{equality for almost all non-archimedean places} and \ref{each place} we obtain for each $\l\in\Qbar^*$  such that $\bfA(\l),\bfB(\l)\ne 0$ we have
\begin{align*}
& \left|\frac{h([\bfA_{\bfc,2}(\l):\bfB_{\bfc,2}(\l)])}{d^2} - \hhat_{f_\l}(\bfc(\l)) \right| \\
& = \left|\frac{1}{[K(\l):\Q]}\sum_{\sigma}\sum_{v\in\Omega_\Q} \frac{\log\max\{|\bfA_{\bfc^\sigma, 2}(\sigma(\l))|_v, |\bfB_{\bfc^\sigma, 2}(\sigma(\l))|_v\}}{d^2} - \hhat_{\bff_{\sigma(\l)},v}(\bfc^\sigma(\sigma(\l))) \right|\\
& \le \frac{1}{[K(\l):\Q]} \sum_{\sigma} \sum_{v\in S} \left| \frac{\log\max\{|A_{\bfc^\sigma, 2}(\sigma(\l))|_v, |B_{\bfc^\sigma, 2}(\sigma(\l))|_v\}}{d^2} - \hhat_{\bff_{\sigma(\l)},v}(\bfc^\sigma(\sigma(\l))) \right|\\
& \le C\cdot |S|,
\end{align*}
where the outer sum is over all embeddings $\sigma:K(\l)\lra\Qbar$.

Finally, since the rational map $t\mapsto g_2(t):=\frac{\bfA_{\bfc,2}(t)}{\bfB_{\bfc,2}(t)}$ has degree $d^2\cdot \hhat_\bff(\bfc)$ (see Propositions~\ref{canonical height generic nonzero} and \ref{canonical height generic zero}), \cite[Theorem~1.8]{Lang-diophantine} yields that there exists a constant $C_1$ depending only on $g_2$ (and hence only on the coefficients of $\bfc$) such that for each $\l\in\Qbar$ we have: 
\begin{equation}
\label{close to the height}
\left|\frac{h([\bfA_{\bfc,2}(\l):\bfB_{\bfc,2}(\l)])}{d^2} - \hhat_\bff(\bfc)\cdot h(\l)\right|\le C_1.
\end{equation}
Using inequality \eqref{close to the height} together with the inequality
$$\left|\frac{h([\bfA_{\bfc,2}(\l):\bfB_{\bfc,2}(\l)])}{d^2} - \hhat_{\bff_\l}(\bfc(\l)) \right|\le C\cdot |S|,$$
we conclude the proof of Theorem~\ref{variation of canonical height} (note that $S$ depends only on $\bfc$). 
\end{proof}


\section{The case of constant starting point}
\label{section e=0}

In this Section we complete the proof of Proposition~\ref{each place} in the case $\bfc$ is a nonzero constant, and then proceed to proving Theorem~\ref{precise constant}. 
We start with several useful general results (not only for the case $\bfc$ is constant).

\begin{prop}
\label{fundamental inequality}
Let $m$ and $M$ be positive real numbers, let $d\ge 2$ and $k_0\ge 0$ be   integers, and let $\{N_k\}_{k\ge 0}$ be a sequence of positive real numbers. If 
$$m\le \frac{N_{k+1}}{N_k^d}\le M$$
for each $k\ge k_0$, then
$$\left|\lim_{n\to\infty}\frac{\log N_k}{d^k} - \frac{\log N_{k_0}}{d^{k_0}}\right| \le \frac{\max\{-\log(m), \log(M)\}}{d^{k_0}(d-1)}.$$   
\end{prop}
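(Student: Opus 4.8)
The plan is to telescope the ratios $\frac{\log N_k}{d^k} - \frac{\log N_{k_0}}{d^{k_0}}$ and bound the resulting series geometrically. First I would introduce, for each $k \ge k_0$, the quantity
\[
\delta_k := \frac{\log N_{k+1}}{d^{k+1}} - \frac{\log N_k}{d^k} = \frac{1}{d^{k+1}}\left(\log N_{k+1} - d\log N_k\right) = \frac{1}{d^{k+1}}\log\!\left(\frac{N_{k+1}}{N_k^d}\right).
\]
The hypothesis $m \le \frac{N_{k+1}}{N_k^d} \le M$ immediately gives $\log(m) \le \log\!\left(\frac{N_{k+1}}{N_k^d}\right) \le \log(M)$, hence $|\delta_k| \le \frac{1}{d^{k+1}}\max\{-\log(m),\log(M)\}$. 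Write $L := \max\{-\log(m),\log(M)\}$, which is nonnegative since $m \le M$ forces at least one of $-\log m$, $\log M$ to be $\ge 0$ (indeed their sum is $\log(M/m) \ge 0$).

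Next, since the limit $\lim_{n\to\infty}\frac{\log N_n}{d^n}$ exists (this is exactly the content guaranteed by the convergence being used throughout the paper, e.g. as in Corollary~\ref{the limit exists}; alternatively it follows from the absolute convergence of $\sum_k \delta_k$ which we are about to establish), I would write the telescoping identity
\[
\frac{\log N_n}{d^n} - \frac{\log N_{k_0}}{d^{k_0}} = \sum_{k=k_0}^{n-1} \delta_k
\]
for every $n > k_0$, and let $n \to \infty$ to obtain
\[
\lim_{n\to\infty}\frac{\log N_n}{d^n} - \frac{\log N_{k_0}}{d^{k_0}} = \sum_{k=k_0}^{\infty} \delta_k.
\]
Then I would bound
\[
\left|\sum_{k=k_0}^{\infty}\delta_k\right| \le \sum_{k=k_0}^{\infty}|\delta_k| \le L\sum_{k=k_0}^{\infty}\frac{1}{d^{k+1}} = \frac{L}{d^{k_0+1}}\cdot\frac{1}{1 - 1/d} = \frac{L}{d^{k_0}(d-1)},
\]
which is exactly the claimed inequality.

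There is no serious obstacle here: the only point requiring a word of care is the existence of the limit, which one either cites from the earlier part of the paper or deduces on the spot from the fact that $\sum |\delta_k|$ converges (a Cauchy-sequence argument on the partial sums $\frac{\log N_n}{d^n}$), and the trivial remark that $L \ge 0$ so the final bound is a genuine (nonnegative) bound. Everything else is a one-line geometric series estimate.
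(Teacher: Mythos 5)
Your proof is correct and follows essentially the same telescoping argument as the paper: bound each difference $\left|\frac{\log N_{k+1}}{d^{k+1}} - \frac{\log N_k}{d^k}\right|$ by $\frac{\max\{-\log m, \log M\}}{d^{k+1}}$ and sum the resulting geometric series over $k\ge k_0$. The extra remarks you include about nonnegativity of $L$ and existence of the limit are harmless but not needed beyond what the paper already assumes.
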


\begin{proof}
We obtain that for each $k\ge k_0$ we have
$$\left|\frac{\log N_{k+1}}{d^{k+1}} - \frac{\log N_k}{d^k} \right| \le \frac{\max\{-\log(m), \log(M)\}}{d^{k+1}}.$$
The conclusion follows by adding the above inequalities for all $k\ge k_0$. 
\end{proof}

We let $|\cdot |_v$ be an absolute value on $\Qbar$. 
As before, for each  $\bfc(t)\in\Qbar(t)$ and for each $t=\l\in\Qbar$ we let $M_{\bfc,n,v}(\l):=\max\{|\bfA_{\bfc,n}(\l)|_v, |\bfB_{\bfc,n}(\l)|_v\}$ for each $n\ge 0$. 

\begin{prop}
\label{bounded lambda}
Consider $\l\in\Qbar^*$ and $|\cdot|_v$ an absolute value on $\Qbar$. Let $m\le 1\le M$ be positive real numbers. If $m\le |\l|_v\le M$, then for each $1\le n_0\le n$ we have
$$\left|\frac{\log M_{n,v}(\l)}{d^n} - \frac{\log M_{n_0,v}(\l)}{d^{n_0}}\right| \le \frac{\log(2M)-\log(m)}{d^{n_0}(d-1)} .$$
\end{prop}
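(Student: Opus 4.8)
\medskip
\noindent\emph{Proof proposal.}
The plan is to deduce this from Proposition~\ref{fundamental inequality} applied to the sequence $N_k:=M_{k,v}(\l)$ with $k_0:=n_0$. (Under the standing assumptions $\l\ne 0$ and $\bfA,\bfB$ coprime and nonzero, one checks using the recursion \eqref{general iterate defi} together with \eqref{1st iterate nonzero}, \eqref{1st iterate zero} that $M_{k,v}(\l)>0$ for every $k\ge 0$, so the logarithms make sense.) Since $n_0\ge 1$, it suffices to establish the two-sided estimate
$$\frac{m}{2M}\ \le\ \frac{M_{k+1,v}(\l)}{M_{k,v}(\l)^{d}}\ \le\ 2M\qquad\text{for every }k\ge 1 .$$
The hypothesis $k\ge 1$ is essential, and it is the reason the statement begins at $n_0\ge 1$: for $k\ge 1$ the \emph{clean} recursion $\bfA_{\bfc,k+1}(t)=\bfA_{\bfc,k}(t)^d+t\,\bfB_{\bfc,k}(t)^d$, $\bfB_{\bfc,k+1}(t)=\bfA_{\bfc,k}(t)\bfB_{\bfc,k}(t)^{d-1}$ of \eqref{general iterate defi} is available, whereas the passage from step $0$ to step $1$ is distorted by a factor of $\l$ when $\bfc(0)=0$ (cf.\ \eqref{1st iterate zero}).

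Fix $k\ge 1$, write $|\cdot|:=|\cdot|_v$, $a:=|\bfA_{\bfc,k}(\l)|$, $b:=|\bfB_{\bfc,k}(\l)|$ and $\mu:=|\l|_v\in[m,M]$, so that $M_{k,v}(\l)=\max(a,b)$ and, by the clean recursion, $M_{k+1,v}(\l)=\max\bigl(|\bfA_{\bfc,k}(\l)^d+\l\bfB_{\bfc,k}(\l)^d|,\ ab^{d-1}\bigr)$. For the upper bound, the triangle inequality (or the ultrametric inequality when $v$ is non-archimedean) gives $|\bfA_{\bfc,k}(\l)^d+\l\bfB_{\bfc,k}(\l)^d|\le a^d+\mu b^d\le(1+M)\max(a,b)^d\le 2M\max(a,b)^d$, while $ab^{d-1}\le\max(a,b)^d$; hence $M_{k+1,v}(\l)\le 2M\,M_{k,v}(\l)^d$. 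For the lower bound I would instead use the reverse triangle inequality $|x+y|\ge\bigl||x|-|y|\bigr|$, valid for \emph{every} absolute value, to get $|\bfA_{\bfc,k}(\l)^d+\l\bfB_{\bfc,k}(\l)^d|\ge|a^d-\mu b^d|$. Both sides of the desired inequality being homogeneous of degree $d$ in $(a,b)$, we may normalise $\max(a,b)=1$, and it remains to check that $\max\bigl(|a^d-\mu b^d|,\ ab^{d-1}\bigr)\ge\frac{m}{2M}$ whenever $\max(a,b)=1$ and $\mu\in[m,M]$. If $b=1$: either $a\ge\frac{m}{2M}$ and we are done, or $a<\frac{m}{2M}\le\frac{m}{2}$, in which case $a^d\le a<\frac{m}{2}$ and so $|a^d-\mu|\ge\mu-a^d>m-\frac{m}{2}=\frac{m}{2}\ge\frac{m}{2M}$. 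If $a=1$: either $b^{d-1}\ge\frac{m}{2M}$ and we are done, or $b^{d-1}<\frac{m}{2M}$, whence $b^{d}\le b^{d-1}<\frac{m}{2M}$, so $\mu b^d\le Mb^d<\frac{m}{2}\le\frac12$ and therefore $|1-\mu b^d|\ge 1-\mu b^d>\frac12\ge\frac{m}{2M}$. (Throughout one uses $m\le 1\le M$.) This proves the displayed two-sided estimate.

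Finally, feeding $m':=\frac{m}{2M}$ and $M':=2M$ (note $m'\le 1\le M'$) into the telescoping computation that proves Proposition~\ref{fundamental inequality} — namely, $\bigl|\frac{\log N_{k+1}}{d^{k+1}}-\frac{\log N_k}{d^k}\bigr|=\frac{|\log(N_{k+1}/N_k^d)|}{d^{k+1}}\le\frac{\max\{-\log m',\log M'\}}{d^{k+1}}$, summed over $k=n_0,\dots,n-1$ rather than to the limit — yields
$$\left|\frac{\log M_{n,v}(\l)}{d^n}-\frac{\log M_{n_0,v}(\l)}{d^{n_0}}\right|\le\frac{\max\{\log(2M/m),\ \log(2M)\}}{d^{n_0}(d-1)}=\frac{\log(2M)-\log m}{d^{n_0}(d-1)},$$
the last equality because $2M/m\ge 2M$ forces the maximum to be $\log(2M/m)$. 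The bound is uniform in $\l$ since the only input used about $\l$ is $m\le|\l|_v\le M$.

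I expect the lower bound $M_{k+1,v}(\l)\ge\frac{m}{2M}\,M_{k,v}(\l)^d$ to be the crux: a priori the two summands of $\bfA_{\bfc,k+1}(\l)=\bfA_{\bfc,k}(\l)^d+\l\bfB_{\bfc,k}(\l)^d$ can nearly cancel, so $|\bfA_{\bfc,k+1}(\l)|$ alone need not be a definite fraction of $M_{k,v}(\l)^d$. What rescues the argument is that such cancellation forces $a^d$ and $\mu b^d$ — hence $a$ and $b$ — to be of comparable size, and in that regime the companion coordinate $\bfB_{\bfc,k+1}(\l)=\bfA_{\bfc,k}(\l)\bfB_{\bfc,k}(\l)^{d-1}$ is automatically bounded below by a definite fraction of $M_{k,v}(\l)^d$; the two-case dichotomy above is precisely the quantitative form of this observation. (Alternatively, one could phrase the lower bound as the standard estimate for the distortion of the chordal metric under the rational map $z\mapsto(z^d+\l)/z$ in terms of the $v$-adic size of its resultant, which for this family is $\l$ up to a unit; the elementary computation above avoids invoking that machinery.)
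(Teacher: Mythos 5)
Your proof is correct and takes essentially the same route as the paper's: both reduce to the two-sided multiplicative estimate $\frac{m}{2M}\le M_{k+1,v}/M_{k,v}^d\le 2M$ for $k\ge 1$ and then telescope via Proposition~\ref{fundamental inequality}. The paper factors the two halves out as Lemma~\ref{upper bound lemma} and Lemma~\ref{lower bound in terms of lambda} (the latter phrased with $\ell=\min\{|\l|_v,1\}$, $L=\max\{|\l|_v,1\}$ so it can be reused later), and its lower-bound case split pivots on whether $|A_n/B_n|_v$ lies within a factor of $(2L/\ell)^{1/d}$ of $1$; your normalisation $\max(a,b)=1$ with the dichotomy on whether $a$, respectively $b^{d-1}$, already exceeds $m/(2M)$ expresses the same cancellation-versus-companion-coordinate idea while avoiding the $d$-th roots, which is a modest simplification of the same argument.
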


Using the classical telescoping argument, we conclude that for each $\l\in\Qbar^*$, the limit $\lim_{n\to\infty} \frac{\log M_{n,v}}{d^n}$ exists.
\begin{cor}
\label{the limit exists}
Consider $\l\in\Qbar^*$ and $|\cdot|_v$ an absolute value on $\Qbar$. Then for each $n_0\ge 1$ we have
$$\left|\lim_{n\to\infty} \frac{\log M_{n,v}(\l)}{d^n} - \frac{\log M_{n_0,v}(\l)}{d^{n_0}}\right| \le \frac{\log(2\max\{1, |\l|_v\})-\log(\min\{1, |\l|_v\})}{d^{n_0}(d-1)} .$$
\end{cor}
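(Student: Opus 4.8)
The plan is to apply Proposition~\ref{bounded lambda} with the sharpest admissible choice of bounding constants and then pass to the limit in $n$. Concretely, set $m:=\min\{1,|\l|_v\}$ and $M:=\max\{1,|\l|_v\}$. Since $\l\neq 0$, these are positive reals, and one checks at once that $m\le 1\le M$ as well as $m\le |\l|_v\le M$, so the hypotheses of Proposition~\ref{bounded lambda} are satisfied. Writing $a_n:=\frac{\log M_{n,v}(\l)}{d^n}$ for $n\ge 1$, that proposition yields
$$\left|a_n-a_{n_0}\right|\le \frac{\log(2M)-\log(m)}{d^{n_0}(d-1)}$$
for all integers $1\le n_0\le n$.

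First I would record that $\{a_n\}_{n\ge 1}$ is a Cauchy sequence: applying the displayed bound with $n_0$ replaced by $n$ gives $|a_{n'}-a_n|\le \frac{\log(2M)-\log(m)}{d^{n}(d-1)}$ for all $1\le n\le n'$, and the right-hand side tends to $0$ as $n\to\infty$ because $d\ge 2$. Hence $\lim_{n\to\infty}a_n$ exists; this is the telescoping argument referred to in the text, and it can equally be obtained by summing the one-step estimates $|a_{k+1}-a_k|\le \frac{\log(2M)-\log(m)}{d^{k+1}}$ that underlie the proof of Proposition~\ref{bounded lambda} via Proposition~\ref{fundamental inequality}.

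Finally, I would fix $n_0\ge 1$ and let $n\to\infty$ in the displayed inequality. By continuity of $x\mapsto |x|$, the left-hand side converges to $\left|\lim_{n\to\infty}a_n-a_{n_0}\right|$, so
$$\left|\lim_{n\to\infty}\frac{\log M_{n,v}(\l)}{d^n}-\frac{\log M_{n_0,v}(\l)}{d^{n_0}}\right|\le \frac{\log(2M)-\log(m)}{d^{n_0}(d-1)}=\frac{\log\left(2\max\{1,|\l|_v\}\right)-\log\left(\min\{1,|\l|_v\}\right)}{d^{n_0}(d-1)},$$
which is exactly the assertion of Corollary~\ref{the limit exists}. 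There is essentially no obstacle here: all the analytic content is already packaged into Proposition~\ref{bounded lambda}, whose bound is uniform in $n$ and therefore simultaneously forces convergence of $\{a_n\}$ and, upon letting $n\to\infty$, reproduces the stated explicit constant. The only point requiring a moment's care is the choice of $m$ and $M$ above, made precisely so that the constant produced by Proposition~\ref{bounded lambda} coincides with the one claimed in the corollary.
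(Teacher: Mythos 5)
Your proof is correct and follows essentially the same route as the paper: the paper derives the corollary from Proposition~\ref{bounded lambda} via the "classical telescoping argument" to establish existence of the limit and then passes to the limit in $n$, which is exactly what you do with the choice $m=\min\{1,|\l|_v\}$ and $M=\max\{1,|\l|_v\}$.
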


\begin{proof}[Proof of Proposition~\ref{bounded lambda}.]
We let $A_n:=\bfA_{\bfc,n}(\l)$, $B_n:=\bfB_{\bfc,n}(\l)$ and $M_{n,v}:=M_{n,v}(\l)$.
\begin{lemma}
\label{upper bound lemma}
Let $\l\in\Qbar^*$ and let $|\cdot|_v$ be an absolute value on $\Qbar$. If $|\l|_v\le M$, then for each $n\ge 1$, we have $M_{n+1,v}\le (M+1)\cdot M_{n,v}^d$.
\end{lemma}

\begin{proof}[Proof of Lemma~\ref{upper bound lemma}.]
Since $|\l|_v\le M$,  we have that for each $n\in\mathbb{N}$, $|A_{n+1}|_v\le (M+1)\cdot M_{n,v}^d$ and also $|B_{n+1}|_v\le M_{n,v}^d$; so
\begin{equation}
\label{upper bound recursive 0}
M_{n+1,v}\le (M+1)\cdot M_{n,v}^d,
\end{equation}
for each $n\ge 1$.
\end{proof}
Because $M\ge 1$, Lemma~\ref{upper bound lemma} yields that 
\begin{equation}
\label{uper bound recursive}
M_{n+1,v}\le 2M\cdot M_{n,v}^d.
\end{equation}
The following result will finish our proof.
\begin{lemma}
\label{lower bound in terms of lambda}
If $\l\in\Qbar^*$ and $|\cdot|_v$ is an absolute value on $\Qbar$, then for each $n\ge 1$ we have $M_{n+1,v}\ge \frac{\min\{|\l|_v,1\}}{2\max\{|\l|_v,1\}}\cdot M_{n,v}^d$. 
\end{lemma}

\begin{proof}[Proof of Lemma~\ref{lower bound in terms of lambda}.]
We let $\ell:=\min\{|\l|_v,1\}$ and $L:=\max\{|\l|_v,1\}$. Now, if 
$$\left(\frac{2L}{\ell}\right)^{\frac{1}{d}}\cdot |B_n|_v \ge |A_n|_v\ge \left(\frac{\ell}{2L}\right)^{\frac{1}{d}}\cdot |B_n|_v,$$
then $M_{n+1,v}\ge |B_{n+1}|_v\ge (\ell/2L)^{(d-1)/d}\cdot M_{n,v}^d$ (note that $\ell<2L$). On the other hand, if
$$\text{either }\left|\frac{A_n}{B_n}\right|_v> \left(\frac{2L}{\ell}\right)^{\frac{1}{d}} \text{ or } \left|\frac{A_n}{B_n}\right|_v< \left(\frac{\ell}{2L}\right)^{\frac{1}{d}}$$
then $M_{n+1,v}\ge |A_{n+1}|_v>(\ell/2L)\cdot M_{n,v}^d$. Indeed, if $|A_n/B_n|_v>(2L/\ell)^{1/d}>1$ then
$$|A_{n+1}|_v> |A_n|_v^d\cdot \left(1-|\l|_v\cdot\frac{\ell}{2L}\right)\ge M_{n,v}^d\cdot \left(1-\frac{\ell}{2}\right)\ge \frac{\ell}{2}\cdot M_{n,v}^d\ge \frac{\ell}{2L}\cdot M_{n,v}^d.$$
Similarly, if $|A_n/B_n|_v<(\ell/2L)^{1/d}<1$ then
$$|A_{n+1}|_v> |B_n|_v^d\cdot \left(|\l|_v - \frac{\ell}{2L}\right)\ge M_{n,v}^d\cdot \left(\frac{\ell}{L}-\frac{\ell}{2L}\right)=\frac{\ell}{2L}\cdot M_{n,v}^d.$$
In conclusion, we get $\frac{\ell}{2L}\cdot M_{n,v}^d \le M_{n+1,v}$ for all $n$.
\end{proof}
Lemmas \ref{upper bound lemma} and \ref{lower bound in terms of lambda}, and Proposition~\ref{fundamental inequality} finish the proof of Proposition~\ref{bounded lambda}.
\end{proof}

The next result shows that Proposition~\ref{each place} holds when $\bfc$ is a constant $\alpha$, and moreover $|\alpha|_v$ is large compared to $|\l|_v$. In addition, this result holds for $d>2$; the case $d=2$ will be handled later in Lemma~\ref{d=2 large lambda 2}.
\begin{prop}
\label{large lambda archimedean}
Assume $d\ge 3$. Let $M\ge 1$ be a real number, let $|\cdot|_v$ be an absolute value on $\Qbar$, let $\l,\alpha\in\Qbar$, let $A_n:=A_{\l,\alpha,n}$, $B_n:=B_{\l,\alpha,n}$ and $M_{n,v}:=\max\{|A_n|_v, |B_n|_v\}$ for $n\ge 0$. Let $n_0$ be a nonnegative integer. If $|\alpha|_v\ge |\l|_v/M\ge 2M$ then for $0\le n_0\le n$ we have
$$\left|\frac{\log M_{n,v}}{d^n} - \frac{\log M_{n_0,v}}{d^{n_0}}\right| \le \frac{\log(2)}{d^{n_0}(d-1)}.$$ 
\end{prop}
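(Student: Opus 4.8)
The plan is to show that, under the hypothesis $|\alpha|_v \ge |\l|_v/M \ge 2M$, the ratios $M_{n+1,v}/M_{n,v}^{d}$ all lie in the fixed interval $[1/2,2]$, for \emph{every} $n \ge 0$. Granting this, the statement follows exactly as in the proof of Proposition~\ref{fundamental inequality}: for each $k \ge n_0$ one has $\left|\frac{\log M_{k+1,v}}{d^{k+1}} - \frac{\log M_{k,v}}{d^{k}}\right| = \frac{1}{d^{k+1}}\left|\log\frac{M_{k+1,v}}{M_{k,v}^{d}}\right| \le \frac{\log 2}{d^{k+1}}$, and summing these over $n_0 \le k \le n-1$ telescopes to the desired bound $\frac{\log 2}{d^{n_0}(d-1)}$. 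Throughout I would write $|\cdot| = |\cdot|_v$, $L = |\l|$ and $a = |\alpha|$; the hypothesis then reads $a \ge L/M$ and $L \ge 2M^{2}$, so in particular $a \ge L/M \ge 2M \ge 2$.

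The core of the argument is an induction on $n \ge 0$ establishing the invariant that $B_n \ne 0$ and $r_n := |A_n|/|B_n| \ge L/M$. The base case is clear since $r_0 = a \ge L/M$. For the inductive step I would first record the elementary inequality $L/M \ge (2L)^{1/d}$, which is equivalent to $L^{d-1} \ge 2M^{d}$ and follows from $L \ge 2M^{2}$ because $2^{d-2}M^{d-2} \ge 1$. Assuming $r_n \ge L/M$, this gives $|A_n|^{d} \ge 2L|B_n|^{d} = 2|\l||B_n|^{d}$, so the term $\l B_n^{d}$ is dominated: by the triangle inequality $\frac{1}{2}|A_n|^{d} \le |A_{n+1}| = |A_n^{d} + \l B_n^{d}| \le \frac{3}{2}|A_n|^{d}$. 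In particular $A_{n+1} \ne 0$, and since $A_n \ne 0$ (as $r_n \ge 2$) also $B_{n+1} = A_n B_n^{d-1} \ne 0$. Finally,
$$r_{n+1} = \frac{|A_{n+1}|}{|A_n|\,|B_n|^{d-1}} \ge \frac{\frac{1}{2}|A_n|^{d}}{|A_n|\,|B_n|^{d-1}} = \frac{1}{2}\,r_n^{d-1} \ge \frac{1}{2}\left(\frac{L}{M}\right)^{d-1} \ge \frac{L}{M},$$
the last step being $(L/M)^{d-2} \ge 2$, which holds because $d \ge 3$ and $L/M \ge 2M \ge 2$. This closes the induction. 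Since $r_n \ge L/M \ge 1$ forces $M_{n,v} = |A_n|$ for every $n \ge 0$ (including $n=0$, as $a \ge 1$), the estimates $\frac{1}{2}|A_n|^{d} \le |A_{n+1}| \le 2|A_n|^{d}$ obtained along the way give $\frac{1}{2} \le M_{n+1,v}/M_{n,v}^{d} \le 2$ for all $n \ge 0$, which is exactly the claim of the first paragraph.

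I expect the only real obstacle to be the maintenance of the invariant $r_n \ge L/M$, and this is precisely where the hypothesis $d \ge 3$ is essential: the recursion yields only $r_{n+1} \ge \frac{1}{2}r_n^{d-1}$, and for $d = 2$ this degenerates to $r_{n+1} \ge \frac{1}{2}r_n$, so the factor-$\frac{1}{2}$ losses accumulate and $M_{n+1,v}/M_{n,v}^{d}$ need not stay bounded — which is why the case $d=2$ must be handled separately (in Lemma~\ref{d=2 large lambda 2}). For $d \ge 3$ the exponent $d-1 \ge 2$ combined with $L/M \ge 2$ makes the invariant self-reproducing, and everything else reduces to the routine telescoping already packaged in Proposition~\ref{fundamental inequality}.
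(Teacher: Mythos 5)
Your proof is correct and follows essentially the same route as the paper: establish by induction the invariant $|A_n|\ge (|\l|_v/M)\,|B_n|$, deduce $M_{n,v}=|A_n|_v$ and that $M_{n+1,v}/M_{n,v}^d$ lies in a fixed interval giving a $\log 2$ bound, then telescope via Proposition~\ref{fundamental inequality}. The only cosmetic difference is that you close the induction via the intermediate inequality $L/M\ge (2L)^{1/d}$ and the recursion $r_{n+1}\ge\tfrac12 r_n^{d-1}$, while the paper expands $|A_{n+1}|-(|\l|_v/M)|B_{n+1}|$ directly; also you obtain the window $[1/2,3/2]$ rather than the paper's $[3/4,5/4]$, but both give the same final constant $\log 2$.
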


In particular, since we know that for each given $\l$, the limit $\lim_{n\to\infty} \frac{\log M_{n,v}}{d^n}$ exists, we conclude that
$$\left|\lim_{n\to\infty}\frac{\log M_{n,v}}{d^n} - \frac{\log M_{n_0,v}}{d^{n_0}}\right| \le \frac{\log(2)}{d^{n_0}(d-1)}.$$

\begin{proof}[Proof of Proposition~\ref{large lambda archimedean}.]
We prove by induction on $n$ the following key result. 
\begin{lemma}
\label{growth for large lambda e=0}
For each $n\ge 0$ , we have $|A_n|_v\ge \frac{|\l|_v}{M}\cdot |B_n|_v$.
\end{lemma}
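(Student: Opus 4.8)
\textbf{Proof plan for Lemma~\ref{growth for large lambda e=0}.}
The plan is to prove the inequality $|A_n|_v \ge \frac{|\l|_v}{M}\cdot |B_n|_v$ by induction on $n$. The base case $n=0$ is immediate: $A_0 = \alpha$ and $B_0 = 1$, and the hypothesis $|\alpha|_v \ge |\l|_v/M$ is exactly what is needed. For the inductive step, I would assume $|A_n|_v \ge \frac{|\l|_v}{M}\cdot |B_n|_v$ and analyze $A_{n+1} = A_n^d + \l B_n^d$ and $B_{n+1} = A_n B_n^{d-1}$. The point is to show that the term $A_n^d$ dominates in $A_{n+1}$, so that $|A_{n+1}|_v$ is comparable to $|A_n|_v^d$, while $|B_{n+1}|_v = |A_n|_v\cdot |B_n|_v^{d-1} \le |A_n|_v^d \cdot (M/|\l|_v)^{d-1}$ by the inductive hypothesis; comparing these two should yield the desired bound with room to spare, using $d \ge 3$ and $|\l|_v/M \ge 2M \ge 2$.

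More concretely, the first step is to estimate $|A_n^d|_v$ against $|\l B_n^d|_v$: by the inductive hypothesis, $|A_n|_v^d \ge (|\l|_v/M)^d |B_n|_v^d$, whereas $|\l B_n^d|_v = |\l|_v |B_n|_v^d$, so $|A_n^d|_v / |\l B_n^d|_v \ge (|\l|_v/M)^{d-1} \ge (2M)^{d-1} \ge 2$ (here one uses $d \ge 2$ and $M \ge 1$). Hence by the ultrametric inequality (or, in the archimedean case, the triangle inequality) $|A_{n+1}|_v \ge |A_n|_v^d - |\l|_v|B_n|_v^d \ge \tfrac{1}{2}|A_n|_v^d$ in the archimedean case, and $|A_{n+1}|_v = |A_n|_v^d$ in the non-archimedean case; in either case $|A_{n+1}|_v \ge \tfrac12 |A_n|_v^d$. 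The second step is then the comparison
$$\frac{|A_{n+1}|_v}{|B_{n+1}|_v} \ge \frac{\tfrac12 |A_n|_v^d}{|A_n|_v\cdot |B_n|_v^{d-1}} = \frac{|A_n|_v^{d-1}}{2|B_n|_v^{d-1}} \ge \frac{1}{2}\left(\frac{|\l|_v}{M}\right)^{d-1} \ge \frac{1}{2}\cdot \frac{|\l|_v}{M}\cdot (2M)^{d-2} \ge \frac{|\l|_v}{M},$$
where the last inequalities use $|\l|_v/M \ge 2M$, $M\ge 1$, and $d \ge 3$ (so that $d-2 \ge 1$ and $(2M)^{d-2}\ge 2$). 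This completes the induction.

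Given the lemma, Proposition~\ref{large lambda archimedean} follows quickly: since $|A_n|_v \ge \frac{|\l|_v}{M}|B_n|_v \ge |B_n|_v$ for all $n$, we have $M_{n,v} = |A_n|_v$, and from the estimate $\tfrac12 |A_n|_v^d \le |A_{n+1}|_v \le 2M_{n,v}^d = 2|A_n|_v^d$ (the upper bound being a special case of Lemma~\ref{upper bound lemma} together with $M \ge 1$, or directly from $|A_n^d + \l B_n^d|_v \le 2|A_n|_v^d$) we get $\tfrac12 \le M_{n+1,v}/M_{n,v}^d \le 2$ for all $n \ge 0$. Applying Proposition~\ref{fundamental inequality} with $m = \tfrac12$, $M = 2$, and base index $n_0$ yields exactly
$$\left|\lim_{n\to\infty}\frac{\log M_{n,v}}{d^n} - \frac{\log M_{n_0,v}}{d^{n_0}}\right| \le \frac{\log 2}{d^{n_0}(d-1)}.$$

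\textbf{Main obstacle.} The only delicate point is bookkeeping the constants so that the chain of inequalities in the inductive step closes with the \emph{same} constant $|\l|_v/M$ it started with, rather than a progressively weaker one; this is where the hypothesis $|\l|_v/M \ge 2M$ (with the explicit factor $2$) and the restriction $d \ge 3$ are used crucially, since for $d = 2$ the exponent $d-2 = 0$ kills the slack and the argument genuinely breaks — which is why the case $d=2$ is deferred to Lemma~\ref{d=2 large lambda 2}. One should also be mildly careful to treat the archimedean and non-archimedean absolute values uniformly, which the factor $\tfrac12$ in "$|A_{n+1}|_v \ge \tfrac12 |A_n|_v^d$" accomplishes (it is vacuously improvable to equality in the non-archimedean case).
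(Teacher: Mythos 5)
Your proof is correct and follows essentially the same inductive approach as the paper, with the same base case and the same use of $|\l|_v\ge 2M^2$ and $d\ge 3$. The paper does the bookkeeping slightly differently: it bounds $|A_{n+1}|_v-\frac{|\l|_v}{M}|B_{n+1}|_v$ from below directly, factoring out $|A_n|_v^d$, whereas you first establish $|A_{n+1}|_v\ge\frac12|A_n|_v^d$ and then compare the ratio $|A_{n+1}|_v/|B_{n+1}|_v$. This is the same argument in two steps rather than one.

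One small arithmetic slip worth flagging: in your first step you assert
$$\frac{|A_n^d|_v}{|\l B_n^d|_v}\ge\left(\frac{|\l|_v}{M}\right)^{d-1},$$
but the inductive hypothesis together with $|\l B_n^d|_v=|\l|_v|B_n|_v^d$ actually gives
$$\frac{|A_n^d|_v}{|\l B_n^d|_v}\ge\frac{(|\l|_v/M)^d}{|\l|_v}=\frac{|\l|_v^{d-1}}{M^d}=\frac{1}{M}\left(\frac{|\l|_v}{M}\right)^{d-1},$$
which is \emph{smaller} by a factor of $M\ge 1$ than what you wrote. The conclusion "$\ge 2$" is nonetheless still true, since $|\l|_v^{d-1}/M^d\ge (2M^2)^{d-1}/M^d=2^{d-1}M^{d-2}\ge 4$ for $d\ge 3$, $M\ge 1$; so the inductive step closes, but you should correct the intermediate inequality. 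The rest — the comparison yielding $|A_{n+1}|_v/|B_{n+1}|_v\ge\frac12(|\l|_v/M)^{d-1}\ge|\l|_v/M$, and the derivation of Proposition~\ref{large lambda archimedean} from the lemma via $\tfrac12\le M_{n+1,v}/M_{n,v}^d\le 2$ and Proposition~\ref{fundamental inequality} — is fine (the paper gets the tighter $\tfrac34\le M_{n+1,v}/M_{n,v}^d\le\tfrac54$, but both suffice for the stated $\log 2$ bound).
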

\begin{proof}[Proof of Lemma~\ref{growth for large lambda e=0}.]
Set $|\cdot|:=|\cdot|_v$. The case $n=0$ is obvious since $A_0=\alpha$ and $B_0=1$.  
Now assume $|A_n|\ge \frac{|\l|}{M}\cdot |B_n|$ and we prove the statement for $n+1$. Indeed, using that $|\l|\ge 2M^2$ and $d\ge 3$ we obtain
\begin{align*}
& |A_{n+1}|-\frac{|\l|}{M}\cdot |B_{n+1}|\\
& \ge |A_n|^d - |\l|\cdot |B_n|^d - \frac{|\l|}{M}\cdot |A_n|\cdot |B_n|^{d-1}\\
& = |A_n|^d\cdot \left(1-|\l|\cdot \frac{|B_n|^d}{|A_n|^d} - |\l|\cdot \frac{|B_n|^{d-1}}{|A_n|^{d-1}}\right)\\
& \ge  |A_n|^d \cdot \left( 1- M^d\cdot |\l|^{1-d} - M^{d-1}|\l|^{2-d}\right)\\
& \ge |A_n|^d \cdot \left( 1- M^{2-d}\cdot 2^{1-d}-M^{3-d}\cdot 2^{2-d}\right)\\
& \ge |A_n|^d\cdot \left(1-2^{-2}-2^{-1}\right)\\
& \ge 0,
\end{align*} 
as desired.
\end{proof}
Lemma~\ref{growth for large lambda e=0} yields that $M_{n,v}=|A_n|_v$ for each $n$ (using that $|\l|_v/M\ge 2M>1$). Furthermore, Lemma~\ref{growth for large lambda e=0} yields 
$$\left|M_{n+1,v}-M_{n,v}^d\right| \le|\l\cdot B_n^d|_v\le M_{n,v}^d\cdot M^d|\l|_v^{1-d}\le M_{n,v}^d\cdot M^{2-d}\cdot 2^{1-d}\le \frac{1}{4}\cdot M_{n,v}^d,$$
because $|\l|_v\ge 2M^2$, $M\ge 1$ and $d-1\ge 2$.
Thus for each $n\ge 1$ we have
\begin{equation}
\label{growth of iterate large lambda e=0}
\frac{3}{4}\le \frac{M_{n+1,v}}{M_{n,v}^d}\le \frac{5}{4}.
\end{equation}
Then Proposition~\ref{fundamental inequality} yields the desired conclusion.
\end{proof}

The next result yields the conclusion of Proposition~\ref{each place} for when the starting point $\bfc$ is constant equal to $\alpha$, and $d$ is larger than $2$.

\begin{prop}
\label{e=0}
Assume $d>2$. Let $\alpha,\l\in\Qbar^*$, let $|\cdot |_v$ be an absolute value, and for each $n\ge 0$ let $A_n:=A_{\l,\alpha,n}$, $B_n:=B_{\l,\alpha, n}$ and $M_{n,v}:=\max\{|A_n|_v, |B_n|_v\}$. Consider $L:=\max\{|\alpha|_v, 1/|\alpha|_v\}$. Then for all $n_0\ge 1$ we have
$$\left|\lim_{n\to\infty} \frac{\log M_{n,v}}{d^n} - \frac{\log M_{n_0,v}}{d^{n_0}}\right| \le (3d-2)\log(2L).$$
\end{prop}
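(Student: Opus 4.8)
The plan is to reduce everything to a statement about the ratios $M_{n+1,v}/M_{n,v}^d$ being bounded above and below by constants of the form a power of $2L$, and then invoke Proposition~\ref{fundamental inequality} with $k_0=n_0$. Since we already know from Corollary~\ref{the limit exists} that $\lim_n \log M_{n,v}/d^n$ exists, it suffices to produce uniform bounds $m \le M_{n+1,v}/M_{n,v}^d \le M$ valid for all $n \ge 1$ with $\max\{-\log m, \log M\} \le (3d-2)(d-1)\log(2L) / d$ or thereabouts; then Proposition~\ref{fundamental inequality} (with $d^{k_0}(d-1) \ge d-1 \ge 1$ in the worst case $n_0=1$, but actually we get to divide by $d^{n_0}(d-1)$) delivers the claimed bound. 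Here the case split from Proposition~\ref{large lambda archimedean} suggests I should separate according to the size of $|\l|_v$ relative to $L$.

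First I would dispose of the easy regimes. If $|\l|_v$ is comparable to $1$ — say $\min\{1,|\l|_v\} \ge $ some fixed negative power of $L$ and $\max\{1,|\l|_v\} \le$ some fixed positive power of $L$ — then Proposition~\ref{bounded lambda} (equivalently Corollary~\ref{the limit exists}) already gives the bound directly, since $\log(2\max\{1,|\l|_v\}) - \log(\min\{1,|\l|_v\})$ is then $O(\log(2L))$ with an explicit small constant. If $|\l|_v$ is very large — specifically $|\l|_v \ge 2L^2$, so that $|\alpha|_v \ge 1/L \ge |\l|_v/(2L^2)$, i.e. taking $M = 2L^2$ we are in the hypothesis $|\alpha|_v \ge |\l|_v/M \ge 2M$ of Proposition~\ref{large lambda archimedean} (using $d \ge 3$) — then that proposition gives an even better bound, $\log(2)/(d^{n_0}(d-1))$, which is certainly $\le (3d-2)\log(2L)$. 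So the remaining case is $|\l|_v$ small, say $|\l|_v < 1/L$ or so (after fixing the thresholds consistently); here I would need to track how $M_{n,v}$ behaves when $\l$ is $v$-adically small, which is precisely the delicate regime flagged in the introduction.

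For the small-$|\l|_v$ case the approach is to show that after one or two iterates the sequence stabilizes: I would argue that $|A_1|_v$ and $|B_1|_v$ are controlled in terms of $L$ and $|\l|_v$ (using $A_1 = \alpha^d + \l$, $B_1 = \alpha$, so $|B_1|_v = |\alpha|_v$ and $|A_1|_v$ is between $|\alpha|_v^d/2$-ish and $2\max\{|\alpha|_v^d, |\l|_v\}$ in the non-archimedean case, with the archimedean triangle-inequality analogue costing a factor $2$), hence $M_{1,v}$ and the ratio $M_{1,v}/M_{0,v}^d$ are pinned down up to powers of $2L$. Then for $n \ge 1$, with $|\l|_v$ small and $M_{n,v} \ge $ some lower bound, the term $\l B_n^d$ is negligible compared to $A_n^d$ once $M_{n,v} \ge 1$, giving $M_{n+1,v} = |A_{n+1}|_v$ pinned to $M_{n,v}^d$ up to a bounded factor; and if $M_{n,v}$ stays near $1$ one handles it by Lemma~\ref{lower bound in terms of lambda} together with the upper bound $M_{n+1,v}\le 2\max\{1,|\l|_v\} M_{n,v}^d$ — but when $|\l|_v$ is small the lower bound $\min\{|\l|_v,1\}/(2\max\{|\l|_v,1\}) = |\l|_v/2$ from Lemma~\ref{lower bound in terms of lambda} is \emph{not} uniformly bounded below, which is exactly the obstacle. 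The resolution must be that one cannot afford to lose control at every step, so instead I would prove that in the small-$|\l|_v$ regime $|A_n|_v = |\alpha|_v^{d^{n-1}} \cdot (\text{bounded factor})^{\text{something}}$, or more robustly that $M_{n,v}$ is bounded below by a fixed positive constant (depending on $L$) for all $n \ge 1$ — essentially because $|A_1|_v \ge |\alpha|_v^d/(2L) \ge 1/(2L^{d+1})$ is already not too small, and once $M \ge$ that, the dominant-term argument keeps it from collapsing.

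The main obstacle, then, is precisely this small-$|\l|_v$ analysis: showing that $M_{n,v}$ does not decay, so that the correction term $\l B_n^d$ remains negligible and the ratio $M_{n+1,v}/M_{n,v}^d$ stays within a factor of $2L$ (or a fixed power thereof) of $1$ for \emph{every} $n \ge 1$, not merely eventually. I expect the bookkeeping to require distinguishing whether $|\alpha|_v \ge 1$ or $|\alpha|_v < 1$ (so that $L = |\alpha|_v$ or $L = 1/|\alpha|_v$ respectively), and in each sub-case to run a short induction establishing a clean two-sided estimate $c_1(L)^{-1} \le M_{n,v}/|\alpha|_v^{d^{n-1}} \le c_1(L)$ or a direct bound on the consecutive ratio. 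Once that uniform ratio bound is in hand, Proposition~\ref{fundamental inequality} closes the argument, and assembling the constants across the three regimes — choosing the thresholds for ``large'', ``bounded'', ``small'' $|\l|_v$ compatibly — yields the stated constant $(3d-2)\log(2L)$, with the $3d$-type dependence coming from the worst of the three regimes (the small-$|\l|_v$ one, where one may lose up to $O(d)$ factors of $2L$ through the first iterate and the $1/(d-1)$ denominator).
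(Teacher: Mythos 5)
Your high-level decomposition (large / medium / small $|\l|_v$, reduce via Proposition~\ref{fundamental inequality}) matches the paper, and the middle regime handled via Proposition~\ref{bounded lambda} is correct. But the other two regimes as you describe them contain genuine gaps, and the small-$\l$ one is fatal.

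In the large-$\l$ regime, you try to apply Proposition~\ref{large lambda archimedean} directly to $\alpha$, asserting that $|\l|_v\ge 2L^2$ forces $|\alpha|_v\ge 1/L\ge |\l|_v/(2L^2)$. That inequality is backwards: $1/L\ge |\l|_v/(2L^2)$ is $|\l|_v\le 2L$, which \emph{contradicts} $|\l|_v\ge 2L^2$ when $L>1$. Indeed $|\alpha|_v$ can be as small as $1/L$ while $|\l|_v$ is huge, so the hypothesis $|\alpha|_v\ge |\l|_v/M$ of Proposition~\ref{large lambda archimedean} will generally fail for $\alpha$ itself. The paper resolves this by passing to the first iterate: when $|\l|_v>8L^d$ one shows $|f_\l(\alpha)|_v\ge |\l|_v/(2L)$ by a direct triangle-inequality estimate, applies Proposition~\ref{large lambda archimedean} to $f_\l(\alpha)$, and converts back via \eqref{conversion} with $k_0=1$ at the cost of a factor $1/d$.

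In the small-$\l$ regime your plan is to establish \emph{uniform} two-sided bounds $m\le M_{n+1,v}/M_{n,v}^d\le M$ for all $n\ge 1$ with $m,M$ controlled by powers of $2L$ alone, and then close with Proposition~\ref{fundamental inequality}. No such uniform lower bound exists: Lemma~\ref{lower bound in terms of lambda} only gives $M_{n+1,v}\ge (|\l|_v/2)\,M_{n,v}^d$ when $|\l|_v<1$, and there are $n$ at which this is essentially sharp, namely once $|f_\l^n(\alpha)|_v$ drops below $\sqrt[d]{2|\l|_v}$. At that point the term $\l B_n^d$ really is comparable to $A_n^d$, and the per-step ratio can be as small as $|\l|_v/2$, which is not bounded below in terms of $L$. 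Your fallback — showing $M_{n,v}$ is bounded below by a constant depending on $L$ — is also not available ($M_{n,v}$ can decay, roughly like $|\l|_v$ to a power, once the orbit has collapsed), and even if it were it would not control the ratio. The mechanism the paper uses is different and is the crux of the whole proof: define $n_1$ as the first iterate with $|f_\l^{n_1}(\alpha)|_v<\sqrt[d]{2|\l|_v}$; for $n<n_1$ the dominant-term estimate gives $M_{n+1,v}\ge M_{n,v}^d/(2L)^{(d-1)^n}$, whose contribution after weighting by $d^{-(n+1)}$ is a \emph{convergent geometric-type series} $\sum ((d-1)/d)^n\log(2L)\le d\log(2L)$; separately, one shows (inequality~\eqref{(9)}) that $\log(1/|\l|_v)<(d-1)^{n_1+2}\log(2L)$, so that $n_1$ is large enough that the potentially huge per-step loss $\log(2/|\l|_v)$ for $n\ge n_1$ is compensated by the factor $1/d^{n_1}$ in the telescoping sum, yielding an $O((d-1))\log(2L)$ contribution. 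It is precisely the interplay of these two phases — and the quantitative lower bound on $n_1$ — that produces the final constant $(3d-2)\log(2L)$; a uniform ratio bound cannot recover it.
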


\begin{proof}
We split our proof into three cases: $|\l|_v$ is large compared to $|\alpha|_v$; $|\l|_v$ and $|\alpha|_v$ are comparable, and lastly, $|\l|_v$ is very small. We start with the case $|\l|_v\gg |\alpha|_v$. Firstly, we note  $L=\max\left\{|\alpha|_v, |\alpha|_v^{-1}\right\}\ge 1$. 

\begin{lemma}
\label{lambda large e=0}
If $|\l|_v>8L^d$ then for integers $1\le n_0\le n$ we have
\begin{equation}
\label{inequality for large lambda}
\left|\frac{\log M_{n,v}}{d^n} - \frac{\log M_{n_0,v}}{d^{n_0}}\right| \le \frac{\log(2)}{d^{n_0}(d-1)}.
\end{equation}
\end{lemma}

\begin{proof}[Proof of Lemma~\ref{lambda large e=0}.]
Since $|\l|_v>8L^d$, then $|\alpha|_v^{d-1}<\frac{|\l|_v}{2|\alpha|_v}$ and therefore
$$|f_\l(\alpha)|_v=\left|\alpha^{d-1}+\frac{\l}{\alpha}\right|_v>\frac{|\l|_v}{2|\alpha|_v} \ge \frac{|\l|_v}{2L}\ge 4L.$$
This allows us to apply Proposition~\ref{large lambda archimedean} coupled with \eqref{conversion} (with $k_0=1$) and obtain that for all $1\le n_0\le n$ we have
\begin{align*}
& \left|\frac{\log M_{n,v}}{d^n} - \frac{\log M_{n_0,v}}{d^{n_0}}\right|\\
& = \frac{1}{d}\cdot \left| \frac{\log\max\left\{|A_{\l,f_\l(\alpha),n-1}|_v, |B_{\l,f_\l(\alpha),n-1}|_v\right\}}{d^{n-1}} - \frac{\log\max\left\{|A_{\l,f_\l(\alpha), n_0-1}|_v, |B_{\l,f_\l(\alpha),n_0-1}|_v\right\}}{d^{n_0-1}}\right|\\
& \le \frac{1}{d}\cdot \frac{\log(2)}{d^{n_0-1}(d-1)},
\end{align*}
as desired.
\end{proof}

Let $R=\frac{1}{4^dL^d}$. If $R\le |\l|_v\le 8L^d$, then Proposition~\ref{bounded lambda} yields that for all $1\le n_0\le n$ we have 
\begin{equation}
\label{inequality for middle lambda}
\left|\frac{\log M_{n,v}}{d^n}-\frac{\log M_{n_0,v}}{d^{n_0}}\right| \le \frac{2d\log(4L)}{d^{n_0}(d-1)}\le \log(4L).
\end{equation}

So we are left to analyze the range $|\l|_v< R$.  

\begin{lemma}
\label{no n_1}
If $|\l|_v<R$, then $\left| \frac{\log M_{n,v}}{d^n} - \frac{\log M_{n_0,v}}{d^{n_0}}\right| \le (3d-2)\log(2L)$ for all  integers $0\le n_0\le n$.
\end{lemma}

\begin{proof}[Proof of Lemma~\ref{no n_1}.]
Firstly we note that since $|\l|_v<R<1$, Lemma~\ref{upper bound lemma} yields that $M_{n+1,v}\le 2\cdot M_{n,v}^d$ and arguing as in the proof of Proposition~\ref{fundamental inequality} we obtain that for all $0\le n_0\le n$ we have
\begin{equation}
\label{right hand side is fine}
\frac{\log M_{n,v}}{d^n} -\frac{\log M_{n_0,v}}{d^{n_0}} \le \frac{\log(2)}{d^{n_0}(d-1)}.
\end{equation}
Next, we will establish a lower bound for the main term from \eqref{right hand side is fine}.  Since 
$$|f_\l^0(\alpha)|_v=|\alpha|_v\ge \frac{1}{L}>\sqrt[d]{2R}>\sqrt[d]{2|\l|_v},$$ 
we conclude that the smallest integer $n_1$ (if it exists) satisfying $|f_\l^{n_1}(\alpha)|_v<\sqrt[d]{2|\l|_v}$ is positive. We will now derive a lower bound for $n_1$ (if $n_1$ exists) in terms of $L$.

We know that for all $n\in\{0,\dots, n_1-1\}$ we have $|f_\l^n(\alpha)|_v\ge \sqrt[d]{2|\l|_v}$. Hence, for each $0\le n\le n_1-1$ we have
\begin{equation}
\label{(1)}
|A_{n+1}|_v \ge |A_n|_v^d\cdot \left(1- \frac{|\l|_v}{|f_\l^n(\alpha)|_v^d}\right) \ge \frac{|A_n|_v^d}{2}.  
\end{equation}
On the other hand, 
\begin{equation}
\label{(2)}
\frac{|\l|_v}{|f_\l^n(\alpha)|_v} \le \frac{|f_\l^n(\alpha)|_v^{d-1}}{2}. 
\end{equation}
So, for each $0\le n\le n_1-1$ we have
\begin{equation}
\label{(3)}
|f_\l^{n+1}(\alpha)|_v \ge |f_\l^n(\alpha)|_v^{d-1} - \frac{|\l|_v}{|f_\l^n(\alpha)|_v} \ge \frac{|f_\l^n(\alpha)|_v^{d-1}}{2}.
\end{equation}
Therefore, repeated applications of \eqref{(3)} yield that for $0\le n\le n_1$ we have
\begin{equation}
\label{(4)}
|f_\l^n(\alpha)|_v \ge \frac{|\alpha|_v^{(d-1)^n}}{2^{\frac{(d-1)^n-1}{d-2}}}\ge  \frac{1}{L^{(d-1)^n}\cdot 2^{\frac{(d-1)^n-1}{d-2}}}\ge \frac{1}{(2L)^{(d-1)^n}},  
\end{equation}
because $|\l|_v\ge 1/L$ and $d-2\ge 1$. So, if $|f_\l^{n_1}(\alpha)|_v<\sqrt[d]{2|\l|_v}$, then
$$\frac{1}{(2L)^{(d-1)^{n_1}}} < \sqrt[d]{2|\l|_v}.$$
Using now the fact that $\log(2)< \log(2L)\cdot (d-1)^{n_1}$ and that $d\le (d-1)^2-1$ (since $d\ge 3$) we obtain 
\begin{equation}
\label{(9)}
\log\left(\frac{1}{|\l|_v}\right) < \log(2L)\cdot (d-1)^{n_1+2}.  
\end{equation}
Moreover, inequality \eqref{(4)} yields that for each $0\le n\le n_1-1$, we have
\begin{equation}
\label{(5)}
|B_{n+1}|_v = |B_n|_v^d \cdot |f_\l^n(\alpha)|_v \ge |B_n|_v^d \cdot \frac{1}{(2L)^{(d-1)^n}}. 
\end{equation}
Combining \eqref{(1)} and \eqref{(5)} we get $M_{n+1,v} \ge \frac{M_{n,v}^d}{(2L)^{(d-1)^n}}$, if $0\le n\le n_1-1$. So,
\begin{equation}
\label{(6)}
\frac{\log M_{n+1,v}}{d^{n+1}} \ge \frac{\log M_{n,v}}{d^n} - \log(2L)\cdot \left(\frac{d-1}{d}\right)^n.  
\end{equation}
Summing up \eqref{(6)} starting from $n=n_0$ to $N-1$ for some $N\leq n_1$, and using inequality \eqref{right hand side is fine} we obtain that for $0\le n_0\le n\le n_1$ we have
\begin{equation}
\label{inequality for small lambda}
\left| \frac{\log M_{n,v}}{d^n} - \frac{\log M_{n_0,v}}{d^{n_0}} \right|\le d\log(2L).
\end{equation}

Now, for $n\ge n_1$, we use Lemma~\ref{lower bound in terms of lambda} and obtain
\begin{equation}
\label{(12)}
M_{n+1,v} \ge \frac{\min\{|\l|_v,1\}}{2\max\{|\l|_v,1\}}\cdot M_{n,v}^d= \frac{|\l|_v}{2}\cdot M_{n,v}^d,
\end{equation}
because $|\l|_v<R<1$. 
Inequalities \eqref{right hand side is fine} and \eqref{(12)}  yield that for all $n\ge n_0\ge  n_1$, we have
\begin{equation}
\label{(13)}
\left|\frac{\log(M_{n,v})}{d^n} - \frac{\log M_{n_0,v}}{d^{n_0}}\right| \le  \log\left(\frac{2}{|\l|_v}\right)\cdot \sum_{n=n_0}^{n-1}\frac{1}{d^{n+1}}<\frac{2\log\left(\frac{1}{|\l|_v}\right)}{(d-1)\cdot d^{n_1}}.  
\end{equation}
In establishing inequality \eqref{(13)} we also used the fact that $|\l|_v<R<1/2$ and so, $\log(2/|\l|_v)<2\log(1/|\l|_v)$.  
Combining inequalities \eqref{(9)}, \eqref{inequality for small lambda} and \eqref{(13)} yields that for all $0\le n_0\le n$ we have 
\begin{align*}
& \left|\frac{\log M_{n,v}}{d^n} - \frac{\log M_{n_0,v}}{d^{n_0}}\right|\\
& < d\log(2L)+\frac{2\cdot (d-1)^{n_1+2}\log(2L)}{(d-1)\cdot d^{n_1}}\\
& < \log(2L)\cdot (d+2\cdot (d-1))\\
& \le  (3d-2)\log(2L),
\end{align*}
as desired.

If on the other hand, we had that $|f_\l^n(\alpha)|_v\ge \sqrt[d]{2|\l|_v}$ for all $n\in\mathbb{N}$, 
we get that equation \eqref{inequality for small lambda} holds for all $n\in\mathbb{N}$. Hence, in this case too, the Lemma follows.
\end{proof}
Lemmas~\ref{lambda large e=0} and \ref{no n_1} and inequality \eqref{inequality for middle lambda} finish our proof.
\end{proof}

For $d=2$ we need a separate argument for proving Proposition~\ref{each place} when $\bfc$ is constant.
\begin{prop}
\label{e=0 d=2}
Let $d=2$, $\alpha,\l\in\Qbar^*$, let $|\cdot |_v$ be an absolute value, and for each $n\ge 0$ let $A_n:=A_{\l,\alpha,n}$, $B_n:=B_{\l,\alpha, n}$ and $M_{n,v}:=\max\{|A_n|_v, |B_n|_v\}$. Let $L:=\max\{|\alpha|_v, 1/|\alpha|_v\}$. Then for all $1\le n_0\le n$ we have
$$\left|\frac{\log M_{n,v}}{2^n} - \frac{\log M_{n_0,v}}{2^{n_0}}\right| \le 4\log(2L).$$
\end{prop}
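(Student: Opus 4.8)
The plan is to follow the same three‑range analysis of $|\lambda|_v$ used for Proposition~\ref{e=0}: $|\lambda|_v$ large (say $|\lambda|_v>8L^2$), $|\lambda|_v$ moderate ($\tfrac{1}{16L^2}\le|\lambda|_v\le 8L^2$), and $|\lambda|_v$ small ($|\lambda|_v<\tfrac{1}{16L^2}$). The moderate range is immediate from Proposition~\ref{bounded lambda} with $m=\tfrac1{16L^2}$ and $M=8L^2$, which gives a bound $\frac{2\log(16L^2)}{2^{n_0}}\le\log(16L^2)=4\log 2+2\log L<4\log(2L)$. So all the work is in the two extreme ranges, and the obstruction compared with the case $d\ge3$ is that for $d=2$ the point at infinity is only parabolic, not super‑attracting, for $f_\lambda$; hence the doubly‑exponential escape estimates that drive Proposition~\ref{large lambda archimedean} and Lemma~\ref{no n_1} are no longer available.

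The substitute, special to $d=2$, is a change of coordinate. Writing $x_n:=f_\lambda^n(\alpha)=A_n/B_n$ and $u_n:=x_n^2/\lambda=A_n^2/(\lambda B_n^2)$, and using $\lambda B_n^2/A_n^2=1/u_n$, the recursion $A_{n+1}=A_n^2+\lambda B_n^2$, $B_{n+1}=A_nB_n$ becomes
\[
A_{n+1}=A_n^2\Bigl(1+\tfrac{1}{u_n}\Bigr),\qquad B_{n+1}=A_nB_n,\qquad u_{n+1}=\frac{(u_n+1)^2}{u_n}=u_n+2+\frac{1}{u_n}.
\]
Thus on the $u$‑line $f_\lambda$ induces the single $\lambda$‑independent map $G(u)=(u+1)^2/u$, and whenever $|u_n|_v\ge4$ we have $\tfrac34\le\bigl|1+\tfrac1{u_n}\bigr|_v\le\tfrac54$, hence $\tfrac34|A_n|_v^2\le|A_{n+1}|_v\le\tfrac54|A_n|_v^2$. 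The key point is that $G$ moves a large $|u|_v$ only boundedly per step: if $|u_n|_v\ge4$ then $\bigl||u_{n+1}|_v-|u_n|_v\bigr|\le\tfrac94$, so once $|u_{n_1}|_v$ is large, $|u_n|_v$ stays $\ge4$ for a block of at least $\sim\tfrac49|u_{n_1}|_v$ further steps. Now $|u_0|_v=|\alpha|_v^2/|\lambda|_v$: in the small range $|u_0|_v>16L^2|\alpha|_v^2\ge16$, and in the large range $|u_0|_v<\tfrac18$, whence $|u_1|_v=|u_0+1|_v^2/|u_0|_v>\tfrac{49}{8}$ is already large (legitimately, since $B_1=\alpha\ne0$ and $A_1\ne0$). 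In either case $|u_n|_v\ge4$ throughout a block of indices whose length is $\gtrsim|u_0|_v$, hence exponentially large in $1/|\lambda|_v$ (small range) or in $|\lambda|_v$ (large range) once $|\lambda|_v$ is extreme.

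Granting this, the estimate is routine via the identity $\frac{\log M_{n+1,v}}{2^{n+1}}-\frac{\log M_{n,v}}{2^n}=\frac1{2^{n+1}}\log\frac{M_{n+1,v}}{M_{n,v}^2}$. On the block where $|u_n|_v$ stays within fixed multiples of $|u_{n_1}|_v$ (of length $\gtrsim|u_{n_1}|_v$), one has $|x_n|_v^2=|\lambda|_v|u_n|_v$ pinned between constant multiples of $|\alpha|_v^2$, so $|x_n|_v\in[\tfrac{\sqrt3}{2L},\sqrt2\,L]$ (and $|x_n|_v\ge1$ in the large range, where $|\lambda|_v>8$); combining $|A_{n+1}|_v\in[\tfrac34,\tfrac54]|A_n|_v^2$ with $|B_{n+1}|_v=|A_n|_v|B_n|_v$ then gives $\bigl|\log\tfrac{M_{n+1,v}}{M_{n,v}^2}\bigr|\le\log(2L)$ (resp.\ $\le\log\tfrac54$) on this block, so its total contribution to the target difference is $<\log(2L)$ (resp.\ $<\log2$). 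For the remaining indices $n\ge N$, where $N\gtrsim|u_0|_v$ is exponentially large, the crude Lemmas~\ref{upper bound lemma} and \ref{lower bound in terms of lambda} give $\bigl|\log\tfrac{M_{n+1,v}}{M_{n,v}^2}\bigr|\le\log2+|\log|\lambda|_v|$, and the prefactor $\sum_{k\ge N}2^{-k-1}=2^{-N}$ swallows $\log|\lambda|_v$ (using $2^{-c/t}\log(1/t)$ is bounded for $t\in(0,1)$); in the large range one first applies \eqref{conversion} with $k_0=1$ to replace $\alpha$ by $f_\lambda(\alpha)$. Adding the contributions of the three ranges and of the $n_0=1$ telescoping tail keeps the total below $4\log(2L)$.

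The main obstacle is exactly the last step: turning the heuristic ``the drift of $|u_n|_v$ is $O(1)$ per step, so it takes $\gtrsim|u_0|_v$ steps to leave the good block and the few badly behaved ratios $M_{n+1,v}/M_{n,v}^2$ are weighted by $2^{-n}$ with $n$ astronomically large'' into a clean, uniform bound $\le4\log(2L)$ — especially at archimedean $v$, where $|u_n|_v$ can genuinely drift toward the repelling fixed point $u=-\tfrac12$ of $G$ (multiplier $-3$), so that $|x_n|_v$ dips to size $\sim\sqrt{|\lambda|_v}$ before $G$ expels the orbit again; one must quantify both how many steps this requires and the damage it does. This is precisely where the explicit shape $u_{n+1}=u_n+2+1/u_n$ — a feature of this particular family — is used, rather than any soft property of $f_\lambda$.
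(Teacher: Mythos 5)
Your change of variables $u_n=x_n^2/\lambda$ with $u_{n+1}=(u_n+1)^2/u_n$ is a genuine structural observation that the paper does not use, and the ``$O(1)$ drift per step'' phenomenon you isolate is correct (and at non-archimedean places, $|u_n|_v$ is literally constant once $|u_n|_v>1$). However, you have explicitly flagged an unfilled gap, and it is a real one, so this does not amount to a proof.

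Two concrete problems. First, the block estimate as stated is internally inconsistent: you simultaneously want the block to last $\gtrsim\frac{4}{9}|u_{n_1}|_v$ steps (which is exactly long enough for $|u_n|_v$ to drift from $|u_{n_1}|_v$ all the way down to $4$) \emph{and} to have $|u_n|_v$ ``pinned between fixed multiples of $|u_{n_1}|_v$'' throughout, so that $|x_n|_v\asymp|\alpha|_v$; these two cannot hold on the same block, and taking a shorter block $\sim\epsilon|u_{n_1}|_v$ forces you to redo the tail accounting. Second, and this is the obstacle you name yourself, you have not shown that the post-block contribution $\sum_{n\ge N}2^{-n-1}\log(2/|\lambda|_v)$ is bounded \emph{uniformly in $\alpha$ and $\lambda$} by something summing, together with the block contribution, to $4\log(2L)$; a statement like ``$2^{-c/t}\log(1/t)$ is bounded'' is true but the bound depends on $c$, which here is $\approx|\alpha|_v^2$ in the small range, i.e.\ $\approx L^2$, so you still owe a clean accounting in terms of $L$. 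Your approach aims at a block of length polynomial in $1/|\lambda|_v$ (or $|\lambda|_v$), which is far more than is needed and correspondingly harder to control.

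The paper settles for much less. In Lemma~\ref{d=2 large lambda 2} (large $|\lambda|_v$) it only needs $|f_\lambda^n(\alpha)|_v\ge|\lambda|_v/(2^nL)$ for small $n$ (Claim~\ref{k_1 is large}), which gives a ``good block'' of length only $k_1\sim\log_4|\lambda|_v$ on which $M_{n+1,v}/M_{n,v}^2\in[\tfrac12,\tfrac32]$; the remaining tail is crude ($\log(2|\lambda|_v)$ per step from Claim~\ref{general bounds for d=2}) but is pre-weighted by $2^{-k_1}\lesssim|\lambda|_v^{-1/2}$, which beats $\log|\lambda|_v$. Lemma~\ref{no n_1 2} (small $|\lambda|_v$) runs the mirror image with $|f_\lambda^n(\alpha)|_v\ge|\alpha|_v/2^n$ from \eqref{(4) 2}, block length $n_1$ with $4^{n_1}\gtrsim 1/(2L^2|\lambda|_v)$ from \eqref{(9) 2}, and the same weighting. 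So the paper's ``good block'' is exponentially shorter than the one you are chasing, but that is exactly what makes the bookkeeping close up. If you want to salvage the $u$-coordinate approach, the cleanest fix is to \emph{also} settle for a block of logarithmic length: the constancy of $|u_n|_v$ at non-archimedean $v$, and the $O(1)$ drift at archimedean $v$, immediately yield $|u_n|_v\ge\tfrac12|u_0|_v$ for $n\lesssim|u_0|_v$, hence certainly for $n\lesssim\log|u_0|_v$; from there the same $2^{-n_1}$ weighting closes the argument, and you will have reproduced the paper's estimates in the $u$-chart. As written, though, the argument is not complete.
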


In particular, since we know (by Corollary~\ref{the limit exists}) that the limit $\lim_{n\to\infty} \frac{\log M_{n,v}}{2^n}$ exists, we conclude that
$$\left|\lim_{n\to\infty}\frac{\log M_{n,v}}{2^n} - \frac{\log M_{n_0,v}}{2^{n_0}}\right| \le 4\log(2L).$$

\begin{proof}[Proof of Proposition~\ref{e=0 d=2}.]
We employ the same strategy as for the proof of Proposition~\ref{e=0}, but there are several technical difficulties for this case. Essentially the problem lies in the fact that $\infty$ is not a superattracting (fixed) point for  $\bff_\l(z)=\frac{z^2+\l}{z}$. So the main change is dealing with the case when $|\l|_v$ is large, but there are changes also when dealing with the case $|\l|_v$ is close to $0$.

\begin{lemma}
\label{d=2 large lambda 2}
Assume $|\l|_v>Q:=(2L)^4$. Then for integers $1\le n_0\le n$ we have 
$$\left|\frac{\log M_{n,v}}{2^n} - \frac{\log M_{n_0,v}}{2^{n_0}}\right| <\frac{5}{2}.$$
\end{lemma}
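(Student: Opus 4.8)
The plan is to estimate directly, tracking the iterates $z_k:=f_\l^k(\alpha)=A_k/B_k$ (with $A_k=A_{\l,\alpha,k}$, $B_k=B_{\l,\alpha,k}$, $M_{k,v}=\max\{|A_k|_v,|B_k|_v\}$ as in the statement) and controlling the ratios $M_{k+1,v}/M_{k,v}^2$; since $\frac{\log M_{k+1,v}}{2^{k+1}}-\frac{\log M_{k,v}}{2^k}=\frac{\log(M_{k+1,v}/M_{k,v}^2)}{2^{k+1}}$, a telescoping sum then yields the conclusion. The role of the hypothesis $|\l|_v>(2L)^4$ is twofold. First, $f_\l(z)=z+\l/z$ moves a point $z$ by \emph{exactly} $|\l|_v/|z|_v$, and the first iterate is already huge: since $|\alpha|_v^2\le L^2<|\l|_v/16$, one gets $|A_1|_v=|\alpha^2+\l|_v\ge\frac{15}{16}|\l|_v$ (in either case), hence $|z_1|_v\ge\frac{15}{16}|\l|_v/L$, comfortably above $2|\l|_v^{1/2}$. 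Second — and this is the crux — as long as $|z_k|_v\ge 2|\l|_v^{1/2}$ the orbit moves by at most $\frac12|\l|_v^{1/2}$ per step, so starting from size $\asymp|\l|_v/L$ it takes \emph{many} steps to descend to size $2|\l|_v^{1/2}$.

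Quantitatively: from $z_{j+1}-z_j=\l/z_j$ and the triangle inequality, an induction gives $|z_k|_v\ge 2|\l|_v^{1/2}$ for every $k\le \frac{2|z_1|_v}{|\l|_v^{1/2}}-3$; hence, with $n_1$ the first index $k\ge1$ for which $|z_k|_v<2|\l|_v^{1/2}$ (put $n_1=\infty$ if there is none, the degenerate cases $A_kB_k=0$ being absorbed below), the bound on $|z_1|_v$ above yields $n_1>\frac{15}{8}s-3$, where $s:=|\l|_v^{1/2}/L$, and $s>4$ because $|\l|_v>(2L)^4$ and $L\ge1$. On the initial stretch $1\le k<n_1$ we have $|\l|_v|B_k|_v^2\le\frac14|A_k|_v^2$, so $|A_{k+1}|_v=|A_k^2+\l B_k^2|_v\in[\frac34|A_k|_v^2,\frac54|A_k|_v^2]$ while $|B_{k+1}|_v=|A_k|_v|B_k|_v\le\frac12|A_k|_v^2$ is smaller; as $M_{k,v}=|A_k|_v$ this gives $M_{k+1,v}/M_{k,v}^2\in[\frac34,\frac54]$, i.e. $|\log(M_{k+1,v}/M_{k,v}^2)|\le\log\frac43$. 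For \emph{every} $k\ge0$ a crude bound holds: $M_{k+1,v}\le(1+|\l|_v)M_{k,v}^2\le 2|\l|_v M_{k,v}^2$, and a short case split on the size of $|z_k|_v$ relative to $\sqrt{2|\l|_v}$ and to $1$ (using $|\l|_v\ge2$) gives $M_{k+1,v}\ge M_{k,v}^2/(2|\l|_v)$, the cases $A_kB_k=0$ being immediate; so $|\log(M_{k+1,v}/M_{k,v}^2)|\le\log(2|\l|_v)$ always.

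Combining, for $1\le n_0\le n$,
\[
\Bigl|\frac{\log M_{n,v}}{2^n}-\frac{\log M_{n_0,v}}{2^{n_0}}\Bigr|\le\sum_{k=n_0}^{n-1}\frac{|\log(M_{k+1,v}/M_{k,v}^2)|}{2^{k+1}}\le\frac{\log\frac43}{2^{n_0}}+\frac{\log(2|\l|_v)}{2^{n_1}},
\]
using the $[\frac34,\frac54]$ bound for $k<n_1$ and the crude bound for $k\ge n_1$ (the last term read as $0$ when $n_1=\infty$). The first summand is $<\log\frac43<\frac15$. For the second: $2^{n_1}>\frac18\,2^{15s/8}$ by the bound on $n_1$, and $|\l|_v=s^2L^2<s^4/16$ since $L<s/4$, so $\log(2|\l|_v)<4\log s$ and the summand is $<\frac{32\log s}{2^{15s/8}}$, which decreases on $s\ge4$ and hence is $<\frac{32\log4}{2^{7.5}}<\frac14$. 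Thus the whole quantity is $<\log\frac43+\frac14<\frac52$. The single real obstacle is the drift estimate bounding $n_1$ from below: this is exactly the archimedean reflection of $\infty$ being a \emph{parabolic} rather than superattracting fixed point of $f_\l$, which is why $d=2$ needs its own treatment. When $|\cdot|_v$ is non-archimedean the difficulty disappears: there $|z_1|_v>|\l|_v^{1/2}$ forces $|z_k|_v=|z_1|_v$ for all $k$, so $n_1=\infty$ and only the $[\frac34,\frac54]$ bound is needed, giving even $\log\frac43$. The generous constant $\frac52$ in the statement leaves ample room for the crude estimates above.
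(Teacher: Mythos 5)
Your argument is correct and has the same overall architecture as the paper's proof: split the telescoping sum $\sum_k \bigl|\log(M_{k+1,v}/M_{k,v}^2)\bigr|/2^{k+1}$ into an initial stretch where the orbit of $\alpha$ stays far from $\{0,\infty\}$ (giving the tight ratio bound) and a tail where only the crude bound $M_{k+1,v}/M_{k,v}^2\in\bigl[\tfrac{1}{2|\l|_v},2|\l|_v\bigr]$ is available, then show the transition index $n_1$ is large enough to make the tail negligible. Where you differ is the escape-time estimate: the paper's Claim~\ref{k_1 is large} proves a multiplicative bound $|f_\l^n(\alpha)|_v\ge|\l|_v/(2^nL)$, giving $k_1\gtrsim\log_4|\l|_v$, whereas you exploit the exact identity $f_\l(z)-z=\l/z$ to get an additive drift bound of $\tfrac12|\l|_v^{1/2}$ per step while $|z_k|_v\ge2|\l|_v^{1/2}$, giving $n_1\gtrsim|\l|_v^{1/2}/L$. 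This is both sharper and conceptually cleaner: it makes precise the remark that $\infty$ is a parabolic, not superattracting, fixed point of $f_\l$, which is exactly why $d=2$ needs separate treatment. The extra strength of the bound is not needed for the generous constant $\tfrac52$, but it is the more natural argument. One minor slip in the writeup: you assert the first summand is $<\log\tfrac43<\tfrac15$, but $\log\tfrac43\approx0.288>\tfrac15$; the correct chain is $\frac{\log(4/3)}{2^{n_0}}\le\frac{\log(4/3)}{2}<\tfrac15$ using $n_0\ge1$ (and even with the looser $\log\tfrac43$ the total stays well under $\tfrac52$), so the conclusion is unaffected.
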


\begin{proof}[Proof of Lemma~\ref{d=2 large lambda 2}.]
Let $k_1$ be the smallest positive integer (if it exists) such that $|f_\l^{k_1}(\alpha)|_v< \sqrt{2|\l|_v}$. So, we know that $|f_\l^n(\alpha)|_v\ge \sqrt{2|\l|_v}$ if $1\le n\le k_1-1$.  We will  show that $k_1>\log_4\left(\frac{|\l|_v}{4L^2}\right)\ge 1$ (note that $|\l|_v>Q=(2L)^4$). 
\begin{claim}
\label{k_1 is large}
For each positive integer $n\le \log_4\left(\frac{|\l|_v}{2L}\right)$ we have $|f_\l^n(\alpha)|_v\ge \frac{|\l|_v}{2^{n}L}$.
\end{claim}

\begin{proof}[Proof of Claim~\ref{k_1 is large}.]
The claim follows by induction on $n$; the case $n=1$ holds since $|\l|_v>(2L)^4$ and so,
$$|f_\l(\alpha)|_v\ge \frac{|\l|_v}{|\alpha|_v}-|\alpha|_v\ge \frac{|\l|_v}{L}-L\ge \frac{|\l|_v}{2L}.$$
Now, assume for $1\le n\le \log_4\left(\frac{|\l|_v}{2L}\right)$ we have $|f_\l^n(\alpha)|_v\ge \frac{|\l|_v}{2^{n}L}$. Then
$$|f_\l^{n+1}(\alpha)|_v\ge |f_\l^n(\alpha)|_v-\frac{|\l|_v}{|f_\l^n(\alpha)|_v} \ge \frac{|\l|_v}{2^nL}-2^nL\ge \frac{|\l|_v}{2^{n+1}L},$$
because $|\l|_v\ge 4^n\cdot 2L$ since $n\le \log_4\left(\frac{|\l|_v}{2L}\right)$. This concludes our proof.
\end{proof}

Claim~\ref{k_1 is large} yields that for each $1\le n\le \log_4\left(\frac{|\l|_v}{4L^2}\right)<
\log_4\left(\frac{|\l|_v}{2L}\right)$ we have 
$$|f_\l^n(\alpha)|_v\ge \frac{|\l|_v}{2^nL}\ge \frac{|\l|_v}{\sqrt{\frac{|\l|_v}{4L^2}}\cdot L}>\sqrt{2|\l|_v}.$$
Hence,

\begin{equation}
\label{k_1 is indeed large}
k_1>\log_4\left(\frac{|\l|_v}{4L^2}\right). 
\end{equation}
Now for each $1\le n\le k_1-1$ we have 
\begin{equation}
\label{inequality step n 0}
\frac{|A_n|_v}{|B_n|_v}=|f_\l^n(\alpha)|_v\ge \sqrt{2|\l|_v}>1,
\end{equation} 
because $|\l|_v>Q>2$ and so, $M_{n,v}=|A_n|_v$. Furthermore, 
$$|f_\l^{k_1}(\alpha)|_v\ge |f_\l^{k_1-1}(\alpha)|_v - \frac{|\l|_v}{|f_\l^{k_1-1}(\alpha)|_v} \ge \sqrt{2|\l|_v} - \sqrt{\frac{|\l|_v}{2}}=\sqrt{\frac{|\l|_v}{2}}>1.$$
Hence $M_{k_1}=|A_{k_1}|_v$ and therefore, for each $1\le n\le k_1-1$, using \eqref{inequality step n 0} we have
$$|M_{n+1,v}-M_{n,v}^2|\le|\l|_v\cdot |B_n|_v^2\le \frac{|A_n|_v^2}{2}=\frac{M_{n,v}^2}{2}.$$
Hence $\frac{M_{n,v}^2}{2}\le M_{n+1,v}\le \frac{3M_{n,v}^2}{2}$, and so
\begin{equation}
\label{d=2 bounds equation}
\left|\frac{\log M_{n+1,v}}{2^{n+1}}-\frac{\log M_{n,v}}{2^n}\right|<\frac{\log(2)}{2^{n+1}},
\end{equation}
for $1\le n\le k_1-1$. The next result establishes a similar inequality to \eqref{d=2 bounds equation} which is valid for all $n\in\bN$.

\begin{claim}
\label{general bounds for d=2}
For each $n\ge 1$ we have $\frac{1}{2|\l|_v}\le \frac{M_{n+1,v}}{M_{n,v}^2}\le 2|\l|_v$.
\end{claim}

\begin{proof}[Proof of Claim~\ref{general bounds for d=2}.]
The lower bound is an immediate consequence of Lemma~\ref{lower bound in terms of lambda} (note that $|\l|_v>Q>1$), while the upper bound follows from Lemma~\ref{upper bound lemma}.
\end{proof}

Using Claim~\ref{general bounds for d=2} we obtain that for all $n\ge 1$ we have
\begin{equation}
\label{general bounds equation for d=2}
\left|\frac{\log M_{n+1,v}}{2^{n+1}}-\frac{\log M_{n,v}}{2^n}\right| \le \frac{\log(2|\l|_v)}{2^{n+1}}.
\end{equation}
Using inequalities \eqref{k_1 is indeed large}, \eqref{d=2 bounds equation} and \eqref{general bounds equation for d=2} we obtain that for all $1\le n_0\le n$ we have
\begin{align}
\nonumber
& \left| \frac{\log M_{n,v}}{2^n} - \frac{\log M_{n_0,v}}{2^{n_0}}\right|\\
\nonumber
& \le \sum_{n=1}^{k_1-1} \frac{\log(2)}{2^{n+1}} + \sum_{n=k_1}^\infty \frac{\log(2|\l|_v)}{2^{n+1}}\\
\nonumber
& \le \frac{\log(2)}{2} + \frac{\log(2|\l|_v)}{2^{k_1}}\\
\nonumber
& \le \frac{\log(2)}{2} + \frac{\log(2|\l|_v)}{\sqrt{\frac{|\l|_v}{4L^2}}}\\
\nonumber
& \le \frac{1}{2} + \frac{\log(2|\l|_v)}{\sqrt[4]{|\l|_v}}\text{ (because $|\l|_v>(2L)^4$)}\\
\label{inequality for large lambda 2}
& < \frac{5}{2}\text{ (because $|\l|_v>Q\ge 16$),}
\end{align}
as desired.

If on the other hand for all $n\in\mathbb{N}$, we have that $|f_\l^n(\alpha)|_v\ge \sqrt{2|\l|_v}$, we get that equation \eqref{d=2 bounds equation} holds for all $n\in\mathbb{N}$. Hence, in this case too the Lemma follows.
\end{proof}

Let $R=\frac{1}{4L^2}$. If $R\le |\l|_v\le Q$ then for each $n_0\ge 1$, Proposition~\ref{bounded lambda} yields 
\begin{equation}
\label{inequality for middle lambda 2}
\left|\frac{\log M_{n,v}}{2^n} - \frac{\log M_{n_0,v}}{2^{n_0}}\right| \le\frac{\log(2Q)-\log(R)}{2} < \frac{7\log(2L)}{2}<4\log(2L).
\end{equation}

Next we deal with the case $|\l|_v$ is small. 
\begin{lemma}
\label{no n_1 2}
If $|\l|_v<R$, then for all $1\le n_0\le n$, we have $\left|\frac{\log M_{n,v}}{2^n} - \frac{\log M_{n_0,v}}{2^{n_0}}\right| \le 3\log(2L)$.
\end{lemma}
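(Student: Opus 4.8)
The plan is to run the same three-regime argument as in the proof of Lemma~\ref{no n_1}, but replacing the superattracting estimate (which degenerates for $d=2$) by the correct substitute: as long as $|f_\l^n(\alpha)|_v\ge\sqrt{2|\l|_v}$ one has
$$|f_\l^{n+1}(\alpha)|_v\ \ge\ |f_\l^n(\alpha)|_v-\frac{|\l|_v}{|f_\l^n(\alpha)|_v}\ \ge\ |f_\l^n(\alpha)|_v-\sqrt{|\l|_v/2},$$
so near $\infty$ the orbit decays only \emph{additively}, by the tiny amount $\sqrt{|\l|_v/2}$, rather than being attracted. This is the one place where $d=2$ genuinely differs from the case treated in Lemma~\ref{no n_1}, and everything else is bookkeeping with geometric series as in Proposition~\ref{fundamental inequality}.

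For the upper bound: since $|\l|_v<R<1$, Lemma~\ref{upper bound lemma} gives $M_{n+1,v}\le 2M_{n,v}^2$, and telescoping yields $\frac{\log M_{n,v}}{2^n}-\frac{\log M_{n_0,v}}{2^{n_0}}\le\frac{\log 2}{2^{n_0}}\le\log(2L)$, so only the reverse inequality remains. For that, let $n_1$ be the least positive integer (if any) with $|f_\l^{n_1}(\alpha)|_v<\sqrt{2|\l|_v}$; it is $\ge 1$ because $|\alpha|_v\ge 1/L>\sqrt{2R}\ge\sqrt{2|\l|_v}$. On $0\le n\le n_1-1$ we get $|\l|_v|B_n|_v^2\le\tfrac12|A_n|_v^2$, hence $|A_{n+1}|_v\ge\tfrac12|A_n|_v^2$, while the additive estimate gives $|f_\l^n(\alpha)|_v\ge|\alpha|_v-n\sqrt{|\l|_v/2}\ge\tfrac1{2L}$ for $n\le\tfrac1{L\sqrt{2|\l|_v}}$; on this initial stretch $|B_{n+1}|_v=|f_\l^n(\alpha)|_v|B_n|_v^2\ge\tfrac1{2L}|B_n|_v^2$, so $M_{n+1,v}\ge\tfrac1{2L}M_{n,v}^2$ and the constant-divisor telescoping costs at most $\log(2L)$. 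On the remaining, short part of $[0,n_1]$ (where $\sqrt{2|\l|_v}\le|f_\l^n(\alpha)|_v<\tfrac1{2L}$) one uses $|B_{n+1}|_v\ge\sqrt{2|\l|_v}|B_n|_v^2$; this stretch has length $O(1/(L\sqrt{|\l|_v}))$, which is comparable to $n_1$ itself, so the divisors $1/\sqrt{2|\l|_v}$ come multiplied by $2^{-n}$ with $n$ of size $n_1$ and contribute a bounded amount.

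For the tail $n\ge n_1$, Lemma~\ref{lower bound in terms of lambda} gives $M_{n+1,v}\ge\tfrac{|\l|_v}{2}M_{n,v}^2$ (since $|\l|_v<1$), so this part telescopes to at most $\frac{\log(2/|\l|_v)}{2^{n_1}}$; evaluating the additive estimate at $n=n_1$ forces $1/|\l|_v$ to be at most polynomial in $n_1$, so $2^{-n_1}\log(1/|\l|_v)$ is bounded by $\log(2L)$. Assembling the pieces (inserting $\frac{\log M_{n_1,v}}{2^{n_1}}$ as an intermediate value for $n_0\le n_1\le n$, and adding the contributions together with the upper bound) gives the desired $3\log(2L)$; if $n_1$ does not exist, the estimates on the initial stretch hold for all $n$ and the bound follows with room to spare.

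The main obstacle is exactly the neutrality of $\infty$ for $d=2$: since the orbit near $\infty$ neither expands nor contracts, there is no automatic separation from the bad fibre $\l=0$, and one must track the slow additive drift of $|f_\l^n(\alpha)|_v$ toward $\sqrt{2|\l|_v}$ and then make the two competing costs — the length of the stretch the orbit spends near $\infty$, and the penalty once it is near $0$ — balance, choosing thresholds carefully and bounding $n_1$ from below sharply enough to land the constant $3\log(2L)$ and not something larger.
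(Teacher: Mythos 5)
Your plan is feasible but takes a genuinely different route from the paper, and the sketch leaves gaps that are not trivial to close; the paper's argument is simpler and worth comparing. On $[0,n_1]$ the paper keeps the \emph{halving} estimate $|f_\l^{n+1}(\alpha)|_v\ge|f_\l^n(\alpha)|_v-\frac{|\l|_v}{|f_\l^n(\alpha)|_v}\ge\frac12|f_\l^n(\alpha)|_v$ (so it does not need, and does not use, a superattracting bound when $d=2$), giving $|f_\l^n(\alpha)|_v\ge\frac{1}{2^nL}$, hence $M_{n+1,v}\ge\frac{M_{n,v}^2}{L\cdot 2^{\max\{1,n\}}}$ with a \emph{variable} per-step cost $\frac{n\log 2+\log L}{2^{n+1}}$ that telescopes over the whole stretch to $\log(2L)$, and a logarithmic lower bound $4^{n_1}>\frac{1}{2L^2|\l|_v}$ that makes $2^{-n_1}\log(2/|\l|_v)\le\log 4+\log L$. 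Your additive drift $|f_\l^n(\alpha)|_v\ge|\alpha|_v-n\sqrt{|\l|_v/2}$ is sharper — it forces $n_1$ to be polynomial in $1/\sqrt{|\l|_v}$ rather than logarithmic in $1/|\l|_v$ — but it requires breaking $[0,n_1]$ into two regimes and carrying a second per-step cost; you buy a much smaller tail that you do not need, at the expense of a messier telescope.

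Three things need to be filled in before this is a proof. First, the middle-stretch contribution is merely asserted to be "bounded." The reason it actually works: with $N:=\lfloor\frac{1}{L\sqrt{2|\l|_v}}\rfloor\ge 1$ one has $\frac{1}{\sqrt{2|\l|_v}}\le L(N+1)$, so $\log\frac{1}{\sqrt{2|\l|_v}}\le\log L+\log(N+1)$, and then $\sum_{k\ge N}\frac{\log L+\log(N+1)}{2^{k+1}}\le\frac{\log L}{2^N}+\frac{\log(N+1)}{2^N}\le\frac{\log L}{2}+\frac{\log 2}{2}=\frac{\log(2L)}{2}$; some such computation is needed to pin the constant. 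Second, your claim that "$2^{-n_1}\log(1/|\l|_v)$ is bounded by $\log(2L)$" is slightly off: evaluating your additive bound at $n=n_1$ gives $\frac{1}{|\l|_v}<\frac{L^2(n_1+2)^2}{2}$, hence $2^{-n_1}\log(2/|\l|_v)<\frac{2\log L+2\log(n_1+2)}{2^{n_1}}\le\log L+\log 3$ (the maximum over $n_1\ge 1$ is at $n_1=1$), and $\log L+\log 3>\log(2L)$ when $L$ is near $1$. The three pieces still total at most $\frac{\log(2L)}{2}+\frac{\log(2L)}{2}+(\log L+\log 3)\le 3\log(2L)$ because $2\log 2+\log L\ge\log 3$, but this numerical slack does not follow from "each piece is at most $\log(2L)$"; you must actually add the three contributions. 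Third, when $n_1$ does not exist your initial-stretch estimate holds only for $n\le N$; the middle-stretch estimate is what carries the rest, and that case needs to be stated explicitly.
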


\begin{proof}[Proof of Lemma~\ref{no n_1 2}.]
The argument is similar to the proof of Lemma~\ref{d=2 large lambda 2}, only that this time we do not know that $|f_\l^n(\alpha)|_v>1$ (and therefore we do not know that $|A_n|_v>|B_n|_v$) because $|\l|_v$ is small. Also, the proof is similar to the proof of Lemma~\ref{no n_1}, but there are several changes due to the fact that $d=2$.

We note that since $|\l|_v<R<1$ then Lemma~\ref{upper bound lemma} yields
\begin{equation}
\label{right hand side is fine 2}
M_{n+1}\le 2M_n^2.
\end{equation}

Now, let $n_1$ be the smallest integer (if it exists) such that $|f_\l^{n_1}(\alpha)|_v<\sqrt{2|\l|_v}$. Note that $|f_\l^0(\alpha)|_v=|\alpha|_v\ge \frac{1}{L}\ge \sqrt{2|\l|_v}$ because $|\l|_v<R=\frac{1}{4L^2}$. Hence (if $n_1$ exists), we get that $n_1\ge 1$. In particular, for each $0\le n\le n_1-1$ we have $|f_\l^{n}(\alpha)|_v\ge \sqrt{2|\l|_v}$ and this yields
\begin{equation}
\label{(1) 2}
|A_{n+1}| \ge |A_n|^2\cdot \left(1- \frac{|\l|_v}{|f_\l^n(\alpha)|_v^2}\right) \ge \frac{|A_n|^2}{2}.  
\end{equation}
On the other hand,
\begin{equation}
\label{(2) 2}
\frac{|\l|_v}{|f_\l^n(\alpha)|_v} \le \frac{|f_\l^n(\alpha)|_v}{2}. 
\end{equation}
So, for each $0\le n\le n_1-1$ we have
\begin{equation}
\label{(3) 2}
|f_\l^{n+1}(\alpha)|_v \ge |f_\l^n(\alpha)|_v - \frac{|\l|_v}{|f_\l^n(\alpha)|_v} \ge \frac{|f_\l^n(\alpha)|_v}{2}.
\end{equation}
Therefore, repeated applications of \eqref{(3) 2} yield for $n\le n_1$ that
\begin{equation}
\label{(4) 2}
|f_\l^n(\alpha)|_v \ge \frac{|\alpha|_v}{2^n}\ge \frac{1}{2^nL},  
\end{equation}
because $|\alpha|_v\ge 1/L$. So, for each $n\ge 0$ we have
\begin{equation}
\label{(5) 2}
|B_{n+1}|_v = |B_n|_v^2 \cdot |f_\l^n(\alpha)|_v \ge |B_n|_v^2 \cdot \frac{1}{2^nL}. 
\end{equation}
Combining \eqref{(1) 2} and \eqref{(5) 2} we get 
\begin{equation}
\label{left hand side is fine 2}
M_{n+1} \ge \frac{M_n^2}{L\cdot 2^{\max\{1,n\}}} 
\end{equation}
for all $n\ge 0$. Using \eqref{right hand side is fine 2} and \eqref{left hand side is fine 2} we obtain for $0\le n\le n_1-1$ that
\begin{equation}
\label{no n_1 equation}
\left|\frac{\log M_{n+1}}{2^{n+1}}-\frac{\log M_n}{2^n}\right| \le \frac{\max\{1,n\}\cdot \log(2)+\log(L)}{2^{n+1}}.
\end{equation}
Summing up \eqref{no n_1 equation} starting from $n=n_0$ to $n=n_1-1$  we obtain that for $1\le n_0\le n\le n_1$ we have
\begin{equation}
\label{inequality for small lambda 22}
\left| \frac{\log M_n}{2^n} - \frac{\log M_{n_0}}{2^{n_0}} \right|\le \sum_{k=n_0}^{n-1} \frac{k\log(2)+\log(L)}{2^{k+1}}< \log(2)+\log(L)=\log(2L).
\end{equation}
Using inequality \eqref{(4) 2} for $n=n_1$ yields $\frac{1}{2^{n_1}L} \le |f_\l^{n_1}(\alpha)|_v< \sqrt{2|\l|_v}$,
and so, 
\begin{equation}
\label{(9) 2}
\frac{1}{|\l|_v} < 4^{n_1}\cdot 2L^2.  
\end{equation}
Now, for $n\ge n_1$, we use Lemma~\ref{lower bound in terms of lambda} and obtain
\begin{equation}
\label{(12) 2}
M_{n+1} \ge \frac{\min\{|\l|_v,1\}}{2\max\{|\l|_v,1\}}\cdot M_n^2= \frac{|\l|_v}{2}\cdot M_n^2,
\end{equation}
because $|\l|<R<1$. 
Inequality \eqref{(12) 2} coupled with inequality \eqref{right hand side is fine 2}  yields that for all $n\ge n_0\ge  n_1$, we have
\begin{equation}
\label{(13) 2}
\left|\frac{\log(M_n)}{2^n} - \frac{\log M_{n_0}}{2^{n_0}} \right| < \log\left(\frac{2}{|\l|_v}\right)\cdot \sum_{n=n_0}^{n-1}\frac{1}{2^{n+1}}<\frac{\log\left(\frac{2}{|\l|_v}\right)}{2^{n_1}}.  
\end{equation}
Combining inequalities \eqref{(9) 2}, \eqref{no n_1 equation} and \eqref{(13) 2} yields that for all $1\le n_0\le n$ we have 
\begin{align}
\nonumber
& \left|\frac{\log M_n}{2^n} - \frac{\log M_{n_0}}{2^{n_0}}\right|\\
\nonumber
& < \log(2L)+\frac{(n_1+1)\log(4)+2\log(L)}{2^{n_1}}\\
\nonumber
& < \log(2L)+\log(4)+\log(L)\\
\label{inequality for small lambda 222}
& \le 3\log(2L),
\end{align}
as desired.
\end{proof}
Lemmas~\ref{d=2 large lambda 2} and \ref{no n_1 2}, and inequality \eqref{inequality for middle lambda 2} finish our proof.
\end{proof}


\begin{proof}[Proof of Theorem~\ref{precise constant}.]
First we deal with the case $\alpha=0$. In this case, $\alpha=0$ is preperiodic under the action of the family $\bff_\l$ and so, $\hhat_{\bff_\l}(\alpha)=0=h(\alpha)$. From now on, assume that $\alpha\ne 0$. Secondly, if $\l=0$ (and $d\ge 3$) then $\hhat_{\bff_0}(\alpha)=h(\alpha)$ (since $\bff_0(z)=z^{d-1}$) and thus 
$$\hhat_{\bff_0}(\alpha)-\hhat_\bff(\alpha)\cdot h(\alpha)=\frac{d-1}{d}\cdot h(\alpha)\le 6d\cdot h(\alpha),$$
and so the conclusion of Theorem~\ref{precise constant} holds. So, from now on we assume both $\alpha$ and $\l$ are nonzero.

Propositions~\ref{e=0} and \ref{e=0 d=2} allow us to apply the same strategy as in the proof of Theorem~\ref{variation of canonical height} only that this time it suffices to compare $\hhat_{f_\l}(\alpha)$ and $h([A_{\l,\alpha,1}:B_{\l,\alpha,1}])$. 
As before, we let $S$ be the set of places of $\Q$ containing the archimedean place and all the non-archimedean places $v$ for which there exists some $\sigma\in\Gal(\Qbar/\Q)$ such that $|\sigma(\alpha)|_v\ne 1$. Since $\alpha\ne 0$, we have that $S$ is finite; moreover $|S|\le 1+\ell$. So, applying Proposition~\ref{equality for almost all non-archimedean places} and Propositions~\ref{e=0} and \ref{e=0 d=2} with $n_0=1$ (see also \eqref{defi local canonical height}) we obtain
\begin{align*}
& \left|\frac{h([A_{\l,\alpha,1}:B_{\l,\alpha,1}])}{d} - \hhat_{f_\l}(\alpha) \right|\\
& = \left|\frac{1}{[K(\l):\Q]}\sum_{\sigma}\sum_{v\in\Omega_\Q} \frac{\log\max\{|A_{\sigma(\l),\sigma(\alpha), 1}|_v, |B_{\sigma(\l), \sigma(\alpha), 1}|_v\}}{d} - \hhat_{\bff_{\sigma(\l)},v}(\sigma(\alpha)) \right|\\
& \le \frac{1}{[K(\l):\Q]} \sum_{\sigma} \sum_{v\in S} \left| \frac{\log\max\{|A_{\sigma(\l),\sigma(\alpha), 1}|_v, |B_{\sigma(\l), \sigma(\alpha), 1}|_v\}}{d} - \hhat_{\bff_{\sigma(\l)},v}(\sigma(\alpha)) \right|\\
& \le \frac{3d-2}{[K(\l):\Q]}\sum_{\sigma:K(\l)\lra \Qbar}\sum_{v\in S}\log\left(2\max\{|\sigma(\alpha)|_v, |\sigma(\alpha)|_v^{-1}\}\right)\\
& \le \frac{3d-2}{[K(\l):\Q]}\sum_{\sigma:K(\l)\lra \Qbar}\sum_{v\in S}\left(\log(2)+\log^+|\alpha|_v+ \log^+\left|\frac{1}{\alpha}\right|_v\right)\\
& \le (3d-2)\cdot \left(|S|+h(\alpha)+h\left(\frac{1}{\alpha}\right)\right)\\
& \le (3d-2)\cdot (1+\ell+2h(\alpha)).
\end{align*}
On the other hand, using that $\hhat_\bff(\alpha)=1/d$ (by Proposition~\ref{canonical height generic nonzero}) and also using the basic inequalities (1)-(3) for the Weil height from Subsection~\ref{heights subsection} we obtain 
\begin{align*}
& \left|\frac{h([A_{\l,\alpha,1}:B_{\l,\alpha,1}])}{d}-\hhat_\bff(\alpha)\cdot h(\l)\right|\\
& = \left|\frac{h\left(\alpha^{d-1}+\frac{\l}{\alpha}\right)}{d} - \frac{h(\l)}{d}\right|\\
& \le \frac{1}{d}\cdot \left(\left| h\left(\alpha^{d-1}+\frac{\l}{\alpha}\right) - h\left(\frac{\l}{\alpha}\right)\right| + \left| h\left(\frac{\l}{\alpha}\right) - h(\l)\right|\right)\\
& \le \frac{1}{d}\cdot ((d-1)h(\alpha)+\log(2)+h(\alpha))\\
& < h(\alpha)+1.
\end{align*}
This finishes the proof of Theorem~\ref{precise constant}.
\end{proof}

\begin{remark}
Theorem~\ref{precise constant} yields an effective method for finding all $\l\in\Qbar$ such that a given point $\alpha\in\Qbar$ is preperiodic under the action of $\bff_\l$.
\end{remark}


\section{Proof of our main result}
\label{proofs}

So we are left to proving Proposition~\ref{each place} in full generality.  We fix a place $v\in\Omega_\Q$.  
Before completing the proof of Proposition~\ref{each place} we need one more result.

\begin{prop}
\label{really small lambda}
Assume $d>2$. Let  $\alpha, \l\in\Qbar$, and let $|\cdot |_v$ be an absolute value. We let $A_n:=A_{\l,\alpha,n}$, $B_n:=B_{\l,\alpha,n}$ and $M_{n,v}:=\max\{|A_n|_v, |B_n|_v\}$.  If $|\alpha|_v\ge 2$ and  $|\l|_v\le \frac{1}{2}$ then for each $n_0\ge 0$ we have 
$$\left|\displaystyle\lim_{n\to\infty}\frac{\log M_{n,v}}{d^n} - \frac{\log M_{n_0,v}}{d^{n_0}}\right|\le \frac{\log(2)}{d^{n_0}(d-1)} .$$ 
\end{prop}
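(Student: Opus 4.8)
The plan is to show that when $|\alpha|_v \ge 2$ and $|\l|_v \le \tfrac12$, the orbit coordinates $A_n, B_n$ behave essentially like the iteration of $z \mapsto z^d$, so that the ratios $M_{n+1,v}/M_{n,v}^d$ stay trapped in a narrow interval around $1$, and then invoke Proposition~\ref{fundamental inequality}. Concretely, I would first prove by induction the key domination estimate $|A_n|_v \ge 2 |B_n|_v$ for all $n \ge 0$. The base case is immediate since $A_0 = \alpha$, $B_0 = 1$ and $|\alpha|_v \ge 2$. For the inductive step, assuming $|A_n|_v \ge 2|B_n|_v$, I estimate
\begin{align*}
|A_{n+1}|_v - 2|B_{n+1}|_v &\ge |A_n|_v^d - |\l|_v |B_n|_v^d - 2 |A_n|_v |B_n|_v^{d-1}\\
&= |A_n|_v^d\left(1 - |\l|_v \frac{|B_n|_v^d}{|A_n|_v^d} - 2\frac{|B_n|_v^{d-1}}{|A_n|_v^{d-1}}\right) \ge |A_n|_v^d\left(1 - \tfrac12 \cdot 2^{-d} - 2 \cdot 2^{-(d-1)}\right),
\end{align*}
and since $d \ge 3$ the quantity in parentheses is at least $1 - \tfrac{1}{16} - \tfrac12 > 0$. (One should double-check the constants: $2 \cdot 2^{-(d-1)} = 2^{2-d} \le 2^{-1}$ for $d \ge 3$, and $\tfrac12 2^{-d} \le \tfrac{1}{16}$; the sum is $< 1$, so the step goes through; if a cleaner constant than $2$ is needed one can equally well run the induction with $|A_n|_v \ge |B_n|_v$ and a slightly larger absolute bound, but $2$ suffices.)

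Granting the domination estimate, it follows that $M_{n,v} = |A_n|_v$ for every $n \ge 0$. Then I bound the multiplicative error: since $A_{n+1} = A_n^d + \l B_n^d$,
\[
\left| M_{n+1,v} - M_{n,v}^d \right| = \left| A_{n+1} |_v - |A_n|_v^d \right| \le |\l|_v |B_n|_v^d \le \tfrac12 \cdot \left(\tfrac12 |A_n|_v\right)^d \le \tfrac12 \cdot 2^{-d} M_{n,v}^d \le \tfrac{1}{16} M_{n,v}^d,
\]
using $|B_n|_v \le \tfrac12 |A_n|_v$ and $|\l|_v \le \tfrac12$. Hence for every $n \ge 0$,
\[
1 - \tfrac{1}{16} \le \frac{M_{n+1,v}}{M_{n,v}^d} \le 1 + \tfrac{1}{16},
\]
so in particular $m \le M_{n+1,v}/M_{n,v}^d \le M$ holds with $m = \tfrac{15}{16}$, $M = \tfrac{17}{16}$, for all $k \ge k_0 := n_0$. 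Actually, to land exactly on the stated bound $\frac{\log 2}{d^{n_0}(d-1)}$ I would instead note the cruder but sufficient two-sided bound $\tfrac12 \le M_{n+1,v}/M_{n,v}^d \le 2$ for all $n$ (which is weaker and certainly implied), so that $\max\{-\log m, \log M\} = \log 2$.

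The final step is a direct application of Proposition~\ref{fundamental inequality} with $N_k := M_{k,v}$, $m := \tfrac12$, $M := 2$, and $k_0 := n_0$: it yields
\[
\left| \lim_{n\to\infty} \frac{\log M_{n,v}}{d^n} - \frac{\log M_{n_0,v}}{d^{n_0}} \right| \le \frac{\log 2}{d^{n_0}(d-1)},
\]
where the limit exists by Corollary~\ref{the limit exists} (or equivalently by the telescoping argument already internal to Proposition~\ref{fundamental inequality}). I expect no serious obstacle here; the only thing requiring care is the bookkeeping of constants in the inductive domination estimate — one must make sure the chosen lower bound ($2$, or whatever constant) is preserved under one step of the recursion given $d \ge 3$ and $|\l|_v \le \tfrac12$, and that the same constant feeds cleanly into the error estimate for $M_{n+1,v}/M_{n,v}^d$. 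This is exactly parallel to the proof of Proposition~\ref{large lambda archimedean} (Lemma~\ref{growth for large lambda e=0}), with the roles of ``$\l$ large'' and ``$\l$ small'' interchanged, so the structure is already in place.
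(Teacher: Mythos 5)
Your proof is correct and takes essentially the same route as the paper: the paper runs the same induction phrased as $|f_\l^n(\alpha)|_v\ge 2$ (equivalent to your $|A_n|_v\ge 2|B_n|_v$), obtains the same bound $|M_{n+1,v}-M_{n,v}^d|\le M_{n,v}^d/16$, and then invokes Proposition~\ref{fundamental inequality}. Your presentation in homogeneous coordinates is a cosmetic variant, and your loosening of the ratio bound to $[\tfrac12,2]$ to match the stated constant is harmless.
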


\begin{proof}
First we claim that for each $n\ge 0$ we have $|f_\l^n(\alpha)|_v\ge 2$. Indeed, for $n=0$ we have $|\alpha|_v\ge 2$ as desired. We assume $|f_\l^n(\alpha)|_v\ge 2$ and since $|\l|_v\le 1/2$ we get that
$$ |f_\l^{n+1}(\alpha)|_v
 \ge |f_\l^n(\alpha)|_v^{d-1} - \frac{|\l|_v}{|f_\l^n(\alpha)|_v}
 \ge 4-\frac{1}{4} > 2.
$$
Hence $M_{n,v}=|A_n|_v$ and we obtain that
$$\left| M_{n+1,v}-M_{n,v}^d\right| \le |\l|_v\cdot |B_n|_v^d \le \frac{M_{n,v}^d}{2\cdot \left|\frac{A_n}{B_n}\right|_v^d}= \frac{M_{n,v}^d}{2|f_\l^n(\alpha)|_v^d} \le \frac{M_{n,v}^d}{16}$$
because $d\ge 3$. 
Thus Proposition~\ref{fundamental inequality} yields the desired conclusion.
\end{proof}

The next result deals with the case $d=2$ in Proposition~\ref{really small lambda}.
\begin{prop}
\label{d=2 really small lambda}
Assume $d=2$. Let $M\ge 1$ be a real number, let $\alpha, \l\in\Qbar$, and let $|\cdot |_v$ be an absolute value. We let $A_n:=A_{\l,\alpha,n}$, $B_n:=B_{\l,\alpha,n}$ and $M_{n,v}:=\max\{|A_n|_v, |B_n|_v\}$.  If $|\alpha|_v\ge \frac{1}{M\cdot |\l|_v}\ge 2M$ then for each $0\le n_0\le n$ we have 
$$\left|\frac{\log M_{n,v}}{2^n} - \frac{\log M_{n_0,v}}{2^{n_0}}\right|\le 4\log(2) .$$ 
\end{prop}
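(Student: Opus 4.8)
The idea is to control the consecutive ratios $M_{n+1,v}/M_{n,v}^{2}$ two‑sidedly and then telescope; the whole point is that the hypothesis $|\alpha|_v\ge\frac{1}{M|\l|_v}$ keeps the forward orbit of $\alpha$ large for a long time. Set $\beta:=|\alpha|_v$, $\mu:=|\l|_v$, and $t_n:=|f_\l^{n}(\alpha)|_v=|A_n/B_n|_v$ (well defined while $B_n\ne0$); the hypotheses give $\mu\le\frac{1}{2M^{2}}\le\frac12$, $\beta\ge 2M\ge 2$ and $\beta\mu\ge\frac1M$. For $d=2$ we have $A_{n+1}=A_n^{2}+\l B_n^{2}$, $B_{n+1}=A_nB_n$, so by Lemma~\ref{upper bound lemma} and Lemma~\ref{lower bound in terms of lambda} (and, in the trivial case $B_n=0$, the identity $M_{n+1,v}=M_{n,v}^{2}$), using $\mu<1$,
\[
\frac{\mu}{2}\ \le\ \frac{M_{n+1,v}}{M_{n,v}^{2}}\ \le\ 2\qquad(n\ge1).
\]

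Next I would prove by induction on $n$ that $B_n\ne0$ and $t_n\ge1$ for all $0\le n\le N:=\bigl\lfloor(\beta-1)/\mu\bigr\rfloor$. The base case is $B_0=1$, $t_0=\beta\ge2$. For the inductive step (with $n+1\le N$): $A_n\ne0$ since $|A_n|_v=t_n|B_n|_v>0$, hence $B_{n+1}=A_nB_n\ne0$; and since $|x+y|_v\ge|x|_v-|y|_v$ always holds, $t_{j+1}\ge t_j-\mu/t_j\ge t_j-\mu$ for $0\le j\le n$, so $t_{n+1}\ge\beta-(n+1)\mu\ge\beta-N\mu\ge1$. The hypothesis $\beta\ge\frac{1}{M\mu}\ge2M$ gives $\beta-1\ge\frac{1}{2M\mu}$, hence
\[
N\ \ge\ \frac{1}{2M\mu^{2}}-1\ \ge\ 2M^{3}-1\ \ge\ 1 .
\]
Therefore $M_{n,v}=|A_n|_v$ for $0\le n\le N$, and for $0\le n\le N-1$, using $|B_n|_v\le|A_n|_v$ and $\mu\le\frac12$,
\[
\tfrac12|A_n|_v^{2}\ \le\ |A_n|_v^{2}-\mu|B_n|_v^{2}\ \le\ |A_{n+1}|_v\ \le\ |A_n|_v^{2}+\mu|B_n|_v^{2}\ \le\ 2|A_n|_v^{2},
\]
so that $\frac12\le M_{n+1,v}/M_{n,v}^{2}\le2$ for $0\le n\le N-1$.

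Finally, for $0\le n_0\le n$ I would telescope and split the sum at $N$, using the bound $\frac12\le M_{k+1,v}/M_{k,v}^2\le 2$ for $k<N$ and $\frac{\mu}{2}\le M_{k+1,v}/M_{k,v}^2\le2$ for $k\ge N\ge1$:
\[
\left|\frac{\log M_{n,v}}{2^{n}}-\frac{\log M_{n_0,v}}{2^{n_0}}\right|\ \le\ \sum_{k=n_0}^{n-1}\frac{\bigl|\log(M_{k+1,v}/M_{k,v}^{2})\bigr|}{2^{k+1}}\ \le\ \sum_{n_0\le k<N}\frac{\log2}{2^{k+1}}+\sum_{k\ge N}\frac{\log(2/\mu)}{2^{k+1}}\ \le\ \log2+\frac{\log(2/\mu)}{2^{N}} .
\]
It remains to bound $\log(2/\mu)/2^{N}$ by an absolute constant. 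Put $y:=\frac{1}{2M\mu^{2}}$; then $y\ge2M^{3}\ge2$, $N\ge y-1$, and $\frac1\mu=\sqrt{2My}\le\sqrt{2y^{2}}$ (since $M\le y$), so $\log(2/\mu)\le\frac32\log2+\log y$ and
\[
\frac{\log(2/\mu)}{2^{N}}\ \le\ \frac{\tfrac32\log2+\log y}{2^{y-1}}\ \le\ \frac54\log2,
\]
the last step because the middle expression decreases for $y\ge2$ and equals $\frac54\log2$ at $y=2$. Adding up, the quantity in question is $\le\frac94\log2<4\log2$, proving the proposition.

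The only delicate point is this final estimate: one must know that the escape time $N$ is at least of order $1/(M\mu^{2})$ and, crucially, that the factor $M$ disappears in the end so the bound is absolute — which is exactly what the normalization $|\alpha|_v\ge\frac{1}{M|\l|_v}$ provides. (For non-archimedean $v$ everything degenerates: $t_n=\beta$ for every $n$ and $M_{n,v}=\beta^{2^{n}}$, so the difference is identically $0$.)
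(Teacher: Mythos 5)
Your proof is correct and follows essentially the same strategy as the paper's: telescope the consecutive differences $\frac{\log M_{k+1,v}}{2^{k+1}}-\frac{\log M_{k,v}}{2^{k}}$, use the crude two-sided bound $\tfrac{|\l|_v}{2}\le M_{k+1,v}/M_{k,v}^{2}\le 2$ (from Lemmas~\ref{upper bound lemma} and \ref{lower bound in terms of lambda}) for all $k$, and sharpen it to $\tfrac12\le M_{k+1,v}/M_{k,v}^{2}\le 2$ while the orbit $|f_\l^{k}(\alpha)|_v$ stays $\ge 1$. The only cosmetic difference is the escape-time threshold (you use $N=\lfloor(|\alpha|_v-1)/|\l|_v\rfloor$, exploiting $|\alpha|_v\ge\frac{1}{M|\l|_v}$ directly, whereas the paper takes $n_1$ just above $\tfrac{3M^{2}}{4|\l|_v}$ via its Lemma~\ref{the iterates are large in the beginning}); either cutoff grows fast enough in $1/|\l|_v$ to absorb $\log(2/|\l|_v)$ into an absolute constant.
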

In particular, using Corollary~\ref{the limit exists} we obtain that for all $n_0\ge 0$ we have
$$\left|\lim_{n\to\infty}\frac{\log M_{n,v}}{2^n} - \frac{\log M_{n_0,v}}{2^{n_0}}\right|\le 4\log(2) .$$ 

\begin{proof}[Proof of Proposition~\ref{d=2 really small lambda}.]
Since $|\l|_v\le \frac{1}{2M^2}\le \frac{1}{2}$, using Lemmas~\ref{upper bound lemma} and \ref{lower bound in terms of lambda} we obtain for all $n\ge 0$ that
\begin{equation}
\label{general bound d=2 again}
\left|\frac{\log M_{n+1,v}}{2^{n+1}} - \frac{\log M_{n,v}}{2^{n}}\right|\le \frac{\log\left(\frac{2}{|\l|_v}\right)}{2^{n+1}}.
\end{equation}
We need to improve the above bound and in order to do this we prove a sharper inequality when $n$ is small compared to $\frac{1}{|\l|_v}$. The strategy is similar to the one employed in the proof of Lemma~\ref{d=2 large lambda 2}. 

First of all, since $|\l|_v<1$, Lemma~\ref{upper bound lemma} yields that for all $ n\ge 0$ we have
\begin{equation}
\label{easy inequality}
\frac{\log M_{n+1,v}}{2^{n+1}}-\frac{\log M_{n,v}}{2^{n}}\le \frac{\log(2)}{2^{n+1}}.
\end{equation}
We will prove a lower bound for the main term from \eqref{easy inequality} when $n_0$ and $n$ are small compared to $\frac{1}{|\l|_v}$. First we prove that $|f_\l^n(\alpha)|_v$ is large when $n$ is small.
\begin{lemma}
\label{the iterates are large in the beginning}
For each integer $n\le \frac{3M^2}{4|\l|_v}$, we have $|f_\l^n(\alpha)|_v\ge \frac{3M}{2}$.
\end{lemma}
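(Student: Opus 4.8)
The plan is to run a direct induction on the iteration count, using that for $d=2$ we have $f_\l(z)=\frac{z^2+\l}{z}=z+\frac{\l}{z}$, so that the reverse triangle inequality (valid for archimedean and non-archimedean $|\cdot|_v$ alike) gives
\[
|f_\l(z)|_v\ge |z|_v-\frac{|\l|_v}{|z|_v}
\]
whenever $z\ne 0$. In other words, each application of $f_\l$ decreases the absolute value by at most $\frac{|\l|_v}{|z|_v}$, and this loss is small exactly when $|z|_v$ is large. Since we start from $|\alpha|_v\ge \frac{1}{M|\l|_v}\ge 2M$, which is comfortably larger than the target value $\frac{3M}{2}$, the orbit can afford to lose a little at each of the first few steps without dropping below $\frac{3M}{2}$, and the bound $n\le \frac{3M^2}{4|\l|_v}$ is precisely what controls the total loss.

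Concretely, I would prove by induction on $m$ the statement: $|f_\l^m(\alpha)|_v\ge \frac{3M}{2}$ for every integer $0\le m\le \frac{3M^2}{4|\l|_v}$. The base case $m=0$ is immediate from $|\alpha|_v\ge \frac{1}{M|\l|_v}\ge 2M\ge \frac{3M}{2}$. For the inductive step, assume the bound holds for all $k\le m$, where $m+1\le \frac{3M^2}{4|\l|_v}$. Then for each $k=0,\dots,m$ we have $|f_\l^k(\alpha)|_v\ge \frac{3M}{2}$, hence $\frac{|\l|_v}{|f_\l^k(\alpha)|_v}\le \frac{2|\l|_v}{3M}$, and the displayed inequality gives $|f_\l^{k+1}(\alpha)|_v\ge |f_\l^k(\alpha)|_v-\frac{2|\l|_v}{3M}$. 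Summing these over $k=0,\dots,m$ yields
\[
|f_\l^{m+1}(\alpha)|_v\ge |\alpha|_v-(m+1)\cdot\frac{2|\l|_v}{3M}\ge \frac{1}{M|\l|_v}-\frac{3M^2}{4|\l|_v}\cdot\frac{2|\l|_v}{3M}=\frac{1}{M|\l|_v}-\frac{M}{2}\ge 2M-\frac{M}{2}=\frac{3M}{2},
\]
which closes the induction and proves the lemma.

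The computation is routine, so the only real subtlety is the interplay of the two hypotheses, which is where I would be most careful: the constraint $m\le \frac{3M^2}{4|\l|_v}$ ensures that the cumulative loss over all iterates under consideration is at most $\frac{M}{2}$, while the hypothesis $\frac{1}{M|\l|_v}\ge 2M$ ensures the orbit starts at least $\frac{M}{2}$ above the threshold $\frac{3M}{2}$, so the loss budget is never exhausted. (I also note in passing that $\frac{1}{M|\l|_v}\ge 2M$ forces $|\l|_v\le\frac{1}{2M^2}\le\frac12$, which is what allows the remaining inequalities in the proof of Proposition~\ref{d=2 really small lambda} to be invoked.)
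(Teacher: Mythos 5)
Your proposal is correct and follows essentially the same inductive argument the paper uses: one shows each iterate loses at most $\frac{2|\l|_v}{3M}$ so long as it stays above $\frac{3M}{2}$, and the hypothesis $n\le\frac{3M^2}{4|\l|_v}$ bounds the cumulative loss by $\frac{M}{2}$, which the starting surplus $|\alpha|_v\ge 2M$ can absorb. (In fact your bookkeeping is slightly cleaner: the paper's displayed sum has a small typo, writing $\frac{n\cdot 2|\l|_v}{3M^2}$ where the per-step loss bound actually yields $\frac{n\cdot 2|\l|_v}{3M}$, though both lead to the stated conclusion.)
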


\begin{proof}[Proof of Lemma~\ref{the iterates are large in the beginning}.]
We will prove the statement inductively. For $n=0$, we know that $|f_\l^0(\alpha)|_v=|\alpha|_v\ge 2M$.  If now for some $n\ge0$ we have that $|f_\l^n(\alpha)|_v\ge \frac{3M}{2}$, then $|f_\l^{n+1}(\alpha)|_v\ge |f_\l^n(\alpha)|_v-\frac{2|\l|_v}{3M}$. Therefore, for all $n\le \frac{3M^2}{4|\l|_v}$ we have
$$|f_\l^n(\alpha)|_v\ge |\alpha|_v- \frac{n\cdot 2|\l|_v}{3M^2}\ge \frac{3M}{2},$$
as desired.
\end{proof}
In conclusion, if we let $n_1$ be the smallest positive integer larger than $\frac{3M^2}{4|\l|}$ we know that for all $0\le n\le n_1-1$ we have $|f_\l^n(\alpha)|_v\ge \frac{3}{2}$. In particular, 
$$|f_\l^{n_1}(\alpha)|_v\ge |f_\l^{n_1-1}(\alpha)|_v - \frac{|\l|_v}{|f_\l^{n_1-1}(\alpha)|_v}\ge \frac{3}{2}-\frac{\frac{1}{2}}{\frac{3}{2}}>1.$$
Therefore, $M_n=|A_n|$ for all $0\le n\le n_1$, and moreover for $0\le n\le n_1-1$ we have 
\begin{equation}
\label{bound for small n M_n}
M_{n+1}-M_n^2= |A_{n+1}|_v-|A_n^2|_v\ge -|\l|_v\cdot |B_n|_v^2=-M_n^2\cdot \frac{|\l|_v}{|f_\l^n(\alpha)|_v^2}\ge -\frac{4M_n^2}{9}, 
\end{equation}
because $|\l|_v<1$ and $|f_\l^n(\alpha)|_v\ge \frac{3}{2}$. 
Inequality \eqref{bound for small n M_n} coupled with the argument from Proposition~\ref{fundamental inequality} yields that for all $0\le n\le n_1-1$ we have
\begin{equation}
\label{upper bound for small n M_n}
\frac{\log M_{n+1}}{2^{n+1}} - \frac{\log M_{n}}{2^{n}}> -\frac{\log(2)}{2^{n+1}}.
\end{equation}
Using the definition of $n_1$ and inequalities \eqref{general bound d=2 again}, \eqref{easy inequality} and \eqref{upper bound for small n M_n}, we conclude that 
\begin{align*}
& \left|\frac{\log M_n}{2^n} - \frac{\log M_{n_0}}{2^{n_0}}\right|\\
& \le \sum_{n=0}^{n_1-1}\frac{\log(2)}{2^{n+1}}+\sum_{n=n_1}^\infty \frac{\log\left(\frac{2}{|\l|}\right)}{2^{n+1}}\\
& \le \log(2)+ \frac{ \log\left(\frac{8n_1}{3M^2}\right)}{2^{n_1}}\\
& \le 4\log(2),
\end{align*}
for all $0\le n_0\le n$.
\end{proof}

Finally, we will establish the equivalent of Proposition~\ref{large lambda archimedean} for $d=2$.
\begin{prop}
\label{large lambda non-archimedean}
Assume $d=2$. Let $M\ge 1$ be a real number, let $\alpha, \l\in\Qbar$, and let $|\cdot |_v$ be an absolute value. We let $A_n:=A_{\l,\alpha,n}$, $B_n:=B_{\l,\alpha,n}$ and $M_{n,v}:=\max\{|A_n|_v, |B_n|_v\}$.  If $|\alpha|_v\ge \frac{|\l|_v}{M}\ge 8M$, then for each $0\le n_0\le n$ we have 
$$\left|\frac{\log M_{n,v}}{2^n} - \frac{\log M_{n_0,v}}{2^{n_0}}\right|\le 1+8M .$$ 
\end{prop}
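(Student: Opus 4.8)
The plan is to mimic the proof of Lemma~\ref{d=2 large lambda 2}: split the orbit of $\alpha$ under $\bff_\l$ into an initial segment on which $|\bff_\l^n(\alpha)|_v$ stays large and a tail where only the crude bounds of Lemmas~\ref{upper bound lemma} and \ref{lower bound in terms of lambda} are available. Write $\beta_n := |A_n|_v/|B_n|_v = |\bff_\l^n(\alpha)|_v$. I would first dispose of the non-archimedean case: since $|\l|_v \ge 8M^2$ gives $\beta_0^2 = |\alpha|_v^2 \ge |\l|_v^2/M^2 \ge 8|\l|_v > |\l|_v$, the ultrametric inequality forces $\beta_n \equiv |\alpha|_v$ and $|A_{n+1}|_v = |A_n|_v^2 = M_{n,v}^2 = M_{n+1,v}$ for every $n$, so $\log M_{n,v}/2^n$ is constant and the claim is trivial. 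So assume from now on that $|\cdot|_v$ is archimedean.

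Let $n_1$ be the least $n$ with $\beta_n^2 < 2|\l|_v$, with $n_1 = \infty$ if there is none. The elementary fact I would rely on is that the triangle inequality gives $\beta_{k+1}^2 \ge \beta_k^2 - 2|\l|_v$, hence $\beta_k^2 \ge \beta_0^2 - 2k|\l|_v$; evaluated at $k = n_1$ this yields $n_1 > \beta_0^2/(2|\l|_v) - 1 \ge |\l|_v/(2M^2) - 1 \ge 3$. On the segment $0 \le n < n_1$ we have $\beta_n \ge \sqrt{2|\l|_v} \ge 4M \ge 4$, so $M_{n,v} = |A_n|_v$; moreover, with $\delta_n := |\l|_v/\beta_n^2 \le \tfrac12$, the relation $A_{n+1} = A_n^2 + \l B_n^2$ gives $|A_{n+1}|_v/|A_n|_v^2 \in [1-\delta_n,\,1+\delta_n]$, while $|B_{n+1}|_v = |A_n|_v|B_n|_v \le \tfrac14|A_n|_v^2 < |A_{n+1}|_v$, so $M_{n+1,v} = |A_{n+1}|_v$ and $M_{n+1,v}/M_{n,v}^2 \in [1-\delta_n,1+\delta_n]$. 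Since $|\log(1\pm\delta_n)| \le 2\delta_n \le 1$, the telescoping increments along this segment are each $\le 2^{-(n+1)}$, contributing at most $\sum_{n\ge0}2^{-(n+1)} = 1$ in total. (If $n_1 = \infty$, this finishes the proof.)

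For the tail $n \ge n_1$, I would invoke Lemmas~\ref{upper bound lemma} and \ref{lower bound in terms of lambda} together with $|\l|_v \ge 1$ to get $\tfrac{1}{2|\l|_v} \le M_{n+1,v}/M_{n,v}^2 \le 2|\l|_v$, so each increment is $\le 2^{-(n+1)}\log(2|\l|_v)$, contributing at most $2^{-n_1}\log(2|\l|_v)$. For an arbitrary pair $0 \le n_0 \le n$ I would split the telescoping sum at $n_1$ (according to the position of $n_0,n$ relative to $n_1$) and add the two estimates, getting at most $1 + 2^{-n_1}\log(2|\l|_v)$. Finally, since $n_1 > |\l|_v/(2M^2) - 1$ with $|\l|_v/(2M^2) \ge 4$, the denominator $2^{n_1}$ is exponentially large in $|\l|_v/(2M^2)$ while $\log(2|\l|_v)$ is only logarithmic, and an elementary estimate (using $\log M \le M$) gives $2^{-n_1}\log(2|\l|_v) < 8M$, yielding the bound $1 + 8M$.

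The hard part is the middle step --- the fact, absent for $d \ge 3$ (compare Proposition~\ref{large lambda archimedean}), that for archimedean $v$ the quantity $|\bff_\l^n(\alpha)|_v$ need not stay large, since $\infty$ is not superattracting for $\bff_\l(z) = \frac{z^2+\l}{z}$; it can decrease by as much as $|\l|_v/|\bff_\l^n(\alpha)|_v$ at each step. What makes the argument work is that the square of this quantity loses at most $2|\l|_v$ per iterate, so it stays above $2|\l|_v$ for at least about $|\l|_v/(2M^2)$ steps, which is long enough that the weights $2^{-n}$ make the tail (where we control ratios only up to a factor $2|\l|_v$) negligible. Everything else is routine bookkeeping of the kind in Lemma~\ref{d=2 large lambda 2}.
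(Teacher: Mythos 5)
Your proof is correct and its overall architecture is the same as the paper's: split the orbit at the first time $n_1$ at which $|\bff_\l^{n}(\alpha)|_v^2$ drops below $2|\l|_v$, use the fine control $M_{n+1,v}/M_{n,v}^2 \in [1-\delta_n, 1+\delta_n]$ on the head (with $\delta_n\le \tfrac12$), use the crude Lemmas~\ref{upper bound lemma} and \ref{lower bound in terms of lambda} on the tail, and then beat the tail by exploiting that $n_1$ is large. The one place where you genuinely diverge is the lower bound on $n_1$. The paper reuses the inductive scheme of Claim~\ref{k_1 is large} to show $|\bff_\l^n(\alpha)|_v\ge |\l|_v/(2^nM)$, which only yields $n_1\gtrsim \log_4\bigl(|\l|_v/(2M^2)\bigr)$; you instead observe that the squared modulus can lose at most $2|\l|_v$ per step, so $\beta_k^2\ge \beta_0^2-2k|\l|_v$, giving the much stronger bound $n_1>|\l|_v/(2M^2)-1$. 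Your bound is cleaner, sharper, and requires no induction; the paper's is weaker but suffices and lets the authors recycle the exact same claim they already proved for Lemma~\ref{d=2 large lambda 2}. Your separate handling of the non-archimedean case is unnecessary (the inequality $\beta_{k+1}\ge |\beta_k-|\l|_v/\beta_k|$ holds ultrametrically too, and the paper treats all places uniformly), but it is a harmless and arguably clarifying simplification. Everything else --- the $|\log(1\pm\delta_n)|\le 2\delta_n$ estimate, the $M_{n+1,v}=|A_{n+1}|_v$ verification, the tail sum $2^{-n_1}\log(2|\l|_v)$, and the final numeric comparison with $8M$ --- checks out.
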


In particular, using Corollary~\ref{the limit exists} we obtain that for all $n_0\ge 0$ we have
 $$\left|\lim_{n\to\infty}\frac{\log M_{n,v}}{2^n} - \frac{\log M_{n_0,v}}{2^{n_0}}\right|\le 1+8M .$$ 

\begin{proof}[Proof of Proposition~\ref{large lambda non-archimedean}.]
We know that $|\l|_v\ge 8M^2>1$. Thus, Lemmas~\ref{upper bound lemma} and \ref{lower bound in terms of lambda} yield that for all $n\ge 0$ we have
\begin{equation}
\label{non-archimedean 1}
\left|\frac{\log M_{n+1,v}}{2^{n+1}}-\frac{\log M_{n,v}}{2^n}\right|\le \frac{\log(2|\l|_v)}{2^{n+1}}.
\end{equation}
As in the proof of Proposition~\ref{d=2 really small lambda}, we will find a sharper inequality for small $n$. Arguing identically as in Claim~\ref{k_1 is large}, we obtain that for $0\le n\le \log_4\left(\frac{|\l|_v}{2M}\right)$ we have 
\begin{equation}
\label{the first iterates are large non-archimedean}
|f_\l^n(\alpha)|_v\ge \frac{|\l|_v}{2^nM}\ge 2^{n+1}\ge 2.
\end{equation}
So, let $n_1$ be the smallest integer larger than $\log_4\left(\frac{|\l|_v}{2M^2}\right)-1$. Since $|\l|_v\ge 8M^2$, we get that $n_1\ge 1$. Also, by its definition, $n_1\le \log_4\left(\frac{|\l|_v}{2M}\right)$; so, for each $0\le n\le n_1$, inequality \eqref{the first iterates are large non-archimedean} holds, and thus $M_{n,v}=|A_n|_v$. Moreover, for $0\le n\le n_1-1$ we get that
$$|M_{n+1,v}-M_{n,v}^2|=||A_{n+1}|_v-|A_n^2|_v|\le |\l|_v\cdot |B_n|_v^2= M_{n,v}^2\cdot \frac{|\l|_v}{|f_\l^n(\alpha)|_v^2}\le M_{n,v}^2\cdot \frac{|\l|_v}{\frac{|\l|_v^2}{4^nM^2}} \le \frac{M_{n,v}^2}{2}.$$
So, using Proposition~\ref{fundamental inequality} we obtain that for all $0\le n\le n_1-1$ we have
\begin{equation}
\label{inequality for small n non-archimedean}
\left|\frac{\log M_{n+1,v}}{2^{n+1}}-\frac{\log M_{n,v}}{2^n}\right| \le \frac{\log(2)}{2^{n+1}}.
\end{equation} 
Using the definition of $n_1$ and inequalities \eqref{non-archimedean 1} and \eqref{inequality for small n non-archimedean} we conclude that
\begin{align*}
& \left|\frac{\log M_{n,v}}{2^n} - \frac{\log M_{n_0,v}}{2^{n_0}}\right|\\
& \le \sum_{n=0}^{n_1-1}\frac{\log(2)}{2^{n+1}} + \sum_{n=n_1}^\infty \frac{\log(2|\l|_v)}{2^{n+1}}\\
& \le \log(2)+\frac{\log(2|\l|_v)}{2^{n_1}}\\
& \le \log(2)+ \frac{2\log(2|\l|_v)}{\sqrt{\frac{|\l|_v}{2M^2}}}\\
& \le \log(2)+ 4M\cdot \frac{\log(2|\l|_v)}{\sqrt{|\l|_v}}\\
& < 1+8M\text{ (because $|\l|_v\ge 8$),}
\end{align*}
for all $0\le n_0\le n$. 
\end{proof}

Our next result completes the proof of Proposition~\ref{each place} by considering the case of nonconstant $\bfc(t)=\frac{\bfA(t)}{\bfB(t)}$, where $\bfA,\bfB\in\Qbar[t]$ are nonzero coprime polynomials.

\begin{prop}
\label{place in S}
Assume $\bfc(\l)=\frac{\bfA(\l)}{\bfB(\l)}\in\Qbar(\l)$ is nonconstant, and let $|\cdot |_v$ be any absolute value on $\Qbar$. Consider $\l_0\in\Qbar^*$ such that $\bfc(\l_0)\ne 0,\infty$. For each $n\ge 0$ we let $\bfA_n:=\bfA_{\bfc, n}(\l_0)$, $\bfB_n:=\bfB_{\bfc,n}(\l_0)$ and $M_{n,v}:=\max\{|\bfA_n|_v, |\bfB_n|_v\}$. 
Then there exists a constant $C$ depending only on $v$ and on the coefficients of $\bfA$ and of $\bfB$ (but independent of $\l_0$) such that
\begin{equation}
\label{inequality for nonconstant}
\left|\lim_{n\to\infty} \frac{\log M_{n,v}}{d^n} - \frac{\log M_{2,v}}{d^2}\right| \le C.
\end{equation}
\end{prop}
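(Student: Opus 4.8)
The plan is to reduce Proposition~\ref{place in S} to the constant-starting-point results (Propositions~\ref{e=0} and \ref{e=0 d=2}) and the ``really small $\l$'' results (Propositions~\ref{really small lambda} and \ref{d=2 really small lambda}), by carefully bounding the sizes of $\bfA(\l_0)$, $\bfB(\l_0)$ and $\bfc(\l_0)=\bfA(\l_0)/\bfB(\l_0)$ in terms of $|\l_0|_v$, and then applying \eqref{conversion} to pass from iterates of $\bfc(\l_0)$ to iterates of the \emph{constant} point $\bfc(\l_0)$ (or $\bff_{\l_0}(\bfc(\l_0))$, or $\bff_{\l_0}^2(\bfc(\l_0))$) under $\bff_{\l_0}$. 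Write $\bfA(t)=\sum a_i t^i$, $\bfB(t)=\sum b_j t^j$, let $H_v$ be the maximum of the $|a_i|_v,|b_j|_v$ (and of $1$), and let $H_v'$ similarly control the $v$-adic sizes from below on the locus where $\bfA,\bfB$ do not vanish $v$-adically (using the resultant, as in Proposition~\ref{equality for almost all non-archimedean places}). All constants produced will depend only on $v$ and these coefficient data, hence only on $\bfc$, as required.

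I would split into the same three ranges of $|\l_0|_v$ as advertised in the introduction. \emph{(i) $|\l_0|_v$ large.} When $|\l_0|_v$ exceeds a bound depending only on the coefficients, $|\bfA(\l_0)|_v$ and $|\bfB(\l_0)|_v$ are governed by their leading terms, so $|\bfc(\l_0)|_v=|\l_0|_v^{\deg\bfc}\cdot(\text{unit-ish factor})$ is large, and in fact comparable to a fixed power of $|\l_0|_v$ while $|\l_0|_v$ is comparable to $|\bfc(\l_0)|_v^{1/\deg\bfc}$; one checks the hypothesis ``$|\alpha|_v\ge|\l|_v/M\ge 2M$'' of Proposition~\ref{large lambda archimedean} (for $d\ge 3$) or of Proposition~\ref{large lambda non-archimedean} (for $d=2$) is met with $\alpha=\bfc(\l_0)$ and a suitable fixed $M$, then feed the output through \eqref{conversion} with $k_0=0$ and Corollary~\ref{the limit exists} to control the difference between $\lim_n \log M_{n,v}/d^n$ and $\log M_{2,v}/d^2$. \emph{(ii) $|\l_0|_v$ bounded above and below} by constants depending only on the coefficients: then $M_{0,v}=\max\{|\bfA(\l_0)|_v,|\bfB(\l_0)|_v\}$ is itself bounded above and below by such constants, and Proposition~\ref{bounded lambda} (with $n_0$ chosen so that $\bfB_{n_0}(\l_0)\ne 0$, e.g. $n_0\le 2$) applied to the family directly, together with Corollary~\ref{the limit exists}, gives \eqref{inequality for nonconstant} immediately.

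\emph{(iii) $|\l_0|_v$ very small} is the crux. Here I distinguish $\bfc(0)\ne0$ and $\bfc(0)=0$. If $\bfc(0)\ne0$ then for $|\l_0|_v$ small enough $|\bfA(\l_0)|_v=|a_0|_v\asymp 1$ and $|\bfB(\l_0)|_v=|b_0|_v\asymp1$, so $|\bfc(\l_0)|_v$ is bounded above and below by fixed constants; passing to a fixed iterate $\bff_{\l_0}^{k_0}(\bfc(\l_0))$ (with $k_0=0,1$) via \eqref{conversion} and rescaling, one arranges $|\alpha|_v\ge 2$ (after at most one step of $\bff_{\l_0}$, using that $|\l_0/\bfc(\l_0)|_v$ is small so $\bff_{\l_0}(\bfc(\l_0))\approx \bfc(\l_0)^{d-1}$, which may be small or not — if $|\bfc(\l_0)|_v$ is itself $\ge$ some fixed constant $>1$ we are done directly, and if $|\bfc(\l_0)|_v$ is bounded but possibly $<1$ we instead use that $1/\bfc(\l_0)$ has bounded height and invoke the constant-point Propositions~\ref{e=0}/\ref{e=0 d=2} with $L$ bounded), and then apply Proposition~\ref{really small lambda} (for $d\ge3$) or Proposition~\ref{d=2 really small lambda} (for $d=2$). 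The genuinely delicate subcase is $\bfc(0)=0$: now $|\bfA(\l_0)|_v\to 0$ as $|\l_0|_v\to0$, so $\bfc(\l_0)$ is $v$-adically small and the naive reduction fails. Here I would use the modified first iterate \eqref{1st iterate zero}: since $t\mid\bfA(t)$, write $\bfA(t)=t^e\tilde\bfA(t)$ with $\tilde\bfA(0)\ne0$, so $|\bfA(\l_0)|_v\asymp|\l_0|_v^e$; then $\bfA_{\bfc,1}(\l_0)=(\bfA(\l_0)^d+\l_0\bfB(\l_0)^d)/\l_0$ has $|\bfA_{\bfc,1}(\l_0)|_v\asymp\max\{|\l_0|_v^{de-1},1\}=1$ (as $de-1\ge 1$) and $\bfB_{\bfc,1}(\l_0)=\bfA(\l_0)\bfB(\l_0)^{d-1}/\l_0$ has $|\bfB_{\bfc,1}(\l_0)|_v\asymp|\l_0|_v^{e-1}$, which is $\le$ a fixed constant if $e\ge1$ and could be large only if $e=0$, contradiction — so in fact $\bff_{\l_0}(\bfc(\l_0))$ has bounded, nonzero $v$-adic size, i.e. $M_{1,v}\asymp1$. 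Using \eqref{conversion} with $k_0=1$ to rewrite the tail $\{M_{n+1,v}\}$ in terms of iterates of the \emph{constant} point $\bff_{\l_0}(\bfc(\l_0))$ under $\bff_{\l_0}$, whose absolute value and reciprocal are bounded by fixed constants, I then apply Propositions~\ref{e=0}/\ref{e=0 d=2} (with $L$ a fixed constant) to control $|\lim_n\log M_{n,v}/d^n - \log M_{1,v}/d|$, and finally Corollary~\ref{the limit exists} (or a direct one-step estimate from \eqref{general iterate defi}) to pass from $\log M_{1,v}/d$ to $\log M_{2,v}/d^2$. The main obstacle is precisely bookkeeping this case $\bfc(0)=0$ with small $|\l_0|_v$: one must track the exact power $e$ to which $t$ divides $\bfA$, verify that after one application of $\bff_{\l_0}$ (with the correct normalization \eqref{1st iterate zero}) the point lands at bounded distance from $1$ uniformly in $\l_0$, and ensure every auxiliary constant depends only on $v$ and the coefficients of $\bfA,\bfB$ and not on $\l_0$. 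Assembling the three ranges, the maximum of the finitely many constants produced is the desired $C$, completing the proof.
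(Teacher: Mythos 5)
Your overall architecture (three ranges of $|\l_0|_v$, pass to a constant point via \eqref{conversion}, handle $\bfc(0)=0$ separately) is indeed the same as the paper's. However there are two concrete errors in your proposal that would break the argument.

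\emph{Gap 1: the large-$\l$ case.} You take $\alpha=\bfc(\l_0)$ with $k_0=0$ and assert $|\bfc(\l_0)|_v\asymp|\l_0|_v^{\deg\bfc}$ is large when $|\l_0|_v$ is large. That is false: $|\bfc(\l_0)|_v\asymp|\l_0|_v^{\deg\bfA-\deg\bfB}$, and if $\deg\bfA\le\deg\bfB$ this is bounded or even tends to $0$ as $|\l_0|_v\to\infty$, so the hypothesis $|\alpha|_v\ge|\l_0|_v/M$ of Propositions~\ref{large lambda archimedean} / \ref{large lambda non-archimedean} fails. The paper passes to $\alpha=f_{\l_0}(\bfc(\l_0))$ and uses \eqref{conversion} with $k_0=1$: since Propositions~\ref{canonical height generic nonzero} and \ref{canonical height generic zero} guarantee $\deg_t\bfA_{\bfc,1}>\deg_t\bfB_{\bfc,1}$, the first iterate is always comparable to a positive power of $|\l_0|_v$, and the hypothesis is then met with a fixed $M$ depending only on the leading coefficients of $\bfA_{\bfc,1},\bfB_{\bfc,1}$.

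\emph{Gap 2: the small-$\l$ case with $\bfc(0)=0$.} Writing $\bfA(t)=t^e\tilde\bfA(t)$ with $\tilde\bfA(0)\ne 0$, you correctly compute $|\bfA_{\bfc,1}(\l_0)|_v\asymp 1$ and $|\bfB_{\bfc,1}(\l_0)|_v\asymp|\l_0|_v^{e-1}$, giving $M_{1,v}\asymp 1$. But you then conclude that $\alpha=\bff_{\l_0}(\bfc(\l_0))$ ``has bounded absolute value and reciprocal'' and apply Propositions~\ref{e=0}/\ref{e=0 d=2} with $L$ a fixed constant. This conflates $M_{1,v}$ (the max of numerator and denominator, which is indeed $\asymp 1$) with the ratio $\alpha$. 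When $e\ge 2$ one has $|\alpha|_v\asymp|\l_0|_v^{1-e}\to\infty$ as $|\l_0|_v\to 0$, so $L=\max\{|\alpha|_v,1/|\alpha|_v\}$ is unbounded and the bound $(3d-2)\log(2L)$ from Proposition~\ref{e=0} is useless. The paper splits on $\bfA'(0)$: if $\bfA'(0)\ne 0$ (so $e=1$), $|\alpha|_v$ is bounded and Propositions~\ref{e=0}/\ref{e=0 d=2} apply; if $\bfA'(0)=0$ (so $e\ge 2$), one instead verifies the hypothesis of Propositions~\ref{really small lambda}/\ref{d=2 really small lambda}, whose conclusions carry a constant independent of the size of $\alpha$. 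Your proposal omits that second subcase, which is exactly the delicate one. (Your handling of $\bfc(0)=\infty$ is also ambiguous — you lump it under ``$\bfc(0)\ne 0$'' but then write $|\bfB(\l_0)|_v\asymp|b_0|_v$, which is vacuous when $\bfB(0)=0$ — but that could be repaired; the two points above are the real obstructions.)
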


\begin{proof}
We let $\alpha:=f_{\l_0}(\bfc(\l_0))$. Since $\l_0$ is fixed in our proof, so is $\alpha$. On the other hand, we will prove that the constant $C$ appearing in \eqref{inequality for nonconstant} does not depend on $\alpha$ (nor on $\l_0$).

We split our proof in three cases depending on $|\l_0|_v$. We first deal with the case of large $|\l_0|_v$. As proven in Propositions~\ref{canonical height generic nonzero} and \ref{canonical height generic zero}, $\deg_t(\bfA_{\bfc, 1}(t))-\deg_t(\bfB_{\bfc, 1}(t))\ge 1$. We let $c_1$ and $c_2$ be the leading coefficients of $\bfA_{\bfc,1}(t)$ and $\bfB_{\bfc,1}(t)$ respectively. Then,  there exists a positive real number $Q$ depending on $v$ and the coefficients of $\bfA$ and $\bfB$ only, such that if $|\l|_v>Q$ then 
$$\frac{|\bfA_{\bfc, 1}(\l)|_v}{|\bfB_{\bfc,1}(\l)|_v}\ge \frac{|\l|_v\cdot |c_1|_v}{2|c_2|_v}\ge 8M,$$
where $M:=2\max\{1,|c_2/c_1|_v\}$. Our first step is to prove the following result.

\begin{lemma}
\label{really large lambda}
If $|\l_0|_v>Q$ then 
\begin{equation}
\label{inequality for large lambda final}
\left|\lim_{n\to\infty}\frac{\log M_{n,v}}{d^n} - \frac{\log M_{2,v}}{d^2}\right|\le \frac{1+16\max\left\{1,\left|\frac{c_2}{c_1}\right|\right\}}{d}.
\end{equation}
\end{lemma}

\begin{proof}[Proof of Lemma~\ref{really large lambda}.]
We recall that $M:=2\max\{1,|c_2/c_1|_v\}$. 
Since $|\l_0|_v>Q$, then 
$$|\alpha|_v=\frac{|\bfA_{\bfc,1}(\l_0)|_v}{|\bfB_{\bfc,1}(\l_0)|_v}\ge |\l_0|_v/M\ge 8M.$$ 
We  apply the conclusion of Propositions~\ref{large lambda archimedean} and \ref{large lambda non-archimedean} with $n_0=1$ and we conclude that
$$\left|\lim_{n\to\infty} \frac{\log\max\{|A_{\l_0,\alpha, n}|_v, |B_{\l_0,\alpha, n}|_v\}}{d^n} - \frac{\log\max\{|A_{\l_0,\alpha, 1}|_v, |B_{\l_0,\alpha, 1}|_v\}}{d} \right| \le 1+8M.$$
On the other hand, using  \eqref{conversion} with $k_0=1$ (note that by our assumption, $\bfB_{\bfc,1}(\l_0)=\bfA(\l_0)\bfB(\l_0)^{d-1}\ne 0 $) we obtain 
\begin{align*}
& \left|\lim_{n\to\infty}\frac{\log M_{n,v}}{d^n} - \frac{\log M_{2,v}}{d^2}\right|\\
& =\frac{1}{d}\cdot \left| \lim_{n\to\infty} \frac{\log\max\{|A_{\l_0,\alpha, n}|_v, |B_{\l_0,\alpha, n}|_v\}}{d^n} - \frac{\log\max\{|A_{\l_0,\alpha, 1}|_v, |B_{\l_0,\alpha, 1}|_v\}}{d} \right|\\
& \le \frac{1+8M}{d},
\end{align*}
as desired.
\end{proof}

We will now deal with the case when $|\l_0|_v$ is small. We will define another quantity, $R$, which will depend only on $v$ and on the coefficients of $A$ and of $B$, and we will assume that $|\l_0|_v<R$. The definition of $R$ is technical since it depends on  whether $\bfc(0)$ equals $0$, $\infty$ or neither. However, the quantity $R$ will depend on $v$ and on the coefficients of $\bfA$ and of $\bfB$ only (and will not depend on $\l_0$ nor on $\alpha=f_{\l_0}(\bfc(\l_0))$).

Assume $\bfc(0)\ne 0,\infty$ (i.e., $\bfA(0)\ne 0$ and $\bfB(0)\ne 0$). Let $c_3:=\bfA(0)\ne 0$ and $c_4:=\bfB(0)\ne 0$ be  the constant coefficients of $\bfA$ and respectively of $\bfB$.  
Then there exists a positive real number $R$ depending on  $v$ and on the coefficients of $\bfA$ and of $\bfB$ only, such that if $|\l|_v<R$, then 
$$\frac{|c_3|_v}{2} < |\bfA(\l)|_v<\frac{3|c_3|_v}{2}\text{ and }\frac{|c_4|_v}{2}<|\bfB(\l)|_v<\frac{3|c_4|_v}{2}.$$
Hence $\frac{|c_3|_v}{3|c_4|_v}<|\bfc(\l)|_v<\frac{3|c_3|_v}{|c_4|_v}$.  
Then we can apply Propositions~\ref{e=0} and \ref{e=0 d=2} with $n_0=2$ (coupled with \eqref{conversion} for $k_0=0$); we obtain that if $|\l_0|_v<R$ then
\begin{align}
\nonumber
& \left|\lim_{n\to\infty}\frac{\log M_{n,v}}{d^n} - \frac{\log M_{2,v}}{d^2} \right| \\
\nonumber
& = \left|\lim_{n\to\infty}\frac{\log\max\{|A_{\l_0,\bfc(\l_0),n}|_v, |B_{\l_0,\bfc(\l_0),n}|_v\}}{d^n} - \frac{\log\max\{|A_{\l_0,\bfc(\l_0),2}|_v, |B_{\l_0,\bfc(\l_0),2}|_v\}}{d^2}
\right| \\
\nonumber
& \le (3d-2)\log\left(2\max\{|\bfc(\l_0)|_v, |\bfc(\l_0)|_v^{-1}\}\right)\\
\label{conclusion 1}
& \le (3d-2)\left(2+\log\left(\max\left\{\frac{|c_3|_v}{|c_4|_v}, \frac{|c_4|_v}{|c_3|_v}\right\}\right)\right).
\end{align}

Assume $c(0)=\infty$. Then $\bfA(0)\ne 0$ but $\bfB(0)=0$; in particular $\deg(\bfB)\ge 1$ since $\bfB$ is not identically equal to $0$. We recall that $c_3=A(0)$ is the constant coefficient of $\bfA$ (we know $c_3\ne 0$). Also, let $c_5$ be the first nonzero coefficient of $\bfB$. Then there exists a positive real number $R$ depending on $v$ and on the coefficients of $\bfA$ and of $\bfB$ only, such that if $0<|\l|_v<R$ then
$$\frac{|c_3|_v}{2}<|\bfA(\l)|_v\text{ and }|\bfB(\l)|_v<2|c_5|_v\cdot |\l|_v,$$
and moreover
$$|\bfc(\l)|_v=\left|\frac{\bfA(\l)}{\bfB(\l)}\right|_v>\frac{1}{M\cdot |\l|_v}\ge 2M,$$
where $M=4\max\{1,|c_5/c_3|_v\}$. Then applying Propositions~\ref{really small lambda} and \ref{d=2 really small lambda} with $n_0=2$ (coupled with \eqref{conversion} for $k_0=0$) we conclude that if $|\l_0|_v<R$ then
\begin{align}
\nonumber
& \left|\lim_{n\to\infty}\frac{\log M_{n,v}}{d^{n}}-\frac{\log M_{2,v}}{d^2}\right|\\
\nonumber
& =  \left| \lim_{n\to\infty}\frac{\log\max\{|A_{\l_0,\bfc(\l_0), n}|_v, |B_{\l_0,\bfc(\l_0), n}|_v\}}{d^n} -\frac{\log\max\{|A_{\l_0,\bfc(\l_0), 2}|_v, |B_{\l_0,\bfc(\l_0),2}|_v\}}{d^2}\right|\\
\label{conclusion 3'}
& \le 4\log(2). 
\end{align}

Assume $\bfc(0)=0$. Then $\bfA(0)=0$ but $\bfB(0)\ne 0$; in particular $\deg(\bfA)\ge 1$ since $\bfA$ is not identically equal to $0$. So, the constant coefficient of $\bfB$ is nonzero, i.e., $c_4=c_5=\bfB(0)\ne 0$ in this case. There are two cases: $\bfA '(0)=0$ or not. First, assume $c_6:=\bfA '(0)\ne 0$. Then there exists a positive real number $R$ depending on $v$ and on the coefficients of $\bfA$ and of $\bfB$ only such that if $0<|\l|_v<R$ then
$$|\bfA_{\bfc,1}(\l)|_v=\left|\frac{\bfA(\l)^d}{\l}+\bfB(\l)^d\right|_v\in \left(\frac{|c_4|_v^d}{2}, \frac{3|c_4|_v^d}{2}\right)\text{ and }$$
$$|\bfB_{\bfc,1}(\l)|_v=\left|\frac{\bfA(\l)\bfB(\l)^{d-1}}{\l}\right|_v\in \left(\frac{\left|c_6c_4^{d-1}\right|_v}{2},\frac{3\left|c_6c_4^{d-1}\right|_v}{2}\right).$$
Hence $\frac{|c_4|_v}{3|c_6|_v}\le |\alpha|_v\le \frac{3|c_4|_v}{|c_6|_v}$ (also note that we are using the fact that $\l_0\ne 0$ and so the above inequalities apply to our case). Hence using Propositions~\ref{e=0} and \ref{e=0 d=2} with $n_0=1$ (combined also with \eqref{conversion} for $k_0=1$, which can be used since $\bfB_{\bfc,1}(\l_0)=\bfA(\l_0)\bfB(\l_0)^{d-1}\ne 0$) we obtain 
\begin{align}
\nonumber
& \left|\lim_{n\to\infty} \frac{\log M_{n,v}}{d^n} - \frac{\log M_{2,v}}{d^2}\right|\\
\nonumber
& = \frac{1}{d}\cdot \left|\lim_{n\to\infty}\frac{\log\max\{|A_{\l_0,\alpha,n}|_v, |B_{\l_0,\alpha,n}|_v\}}{d^n} - \frac{\log\max\{|A_{\l_0,\alpha, 1}|_v, |B_{\l_0,\alpha, 1}|_v\}}{d}\right|\\
\nonumber
& \le \frac{3d-2}{d}\cdot\log\left(2\max\left\{|\alpha|_v, |\alpha|_v^{-1}\right\}\right)\\
\label{conclusion 2}
& \le 3\cdot\left(2+\log\left(\max\left\{\left|\frac{c_4}{c_6}\right|_v, \left|\frac{c_6}{c_4}\right|_v\right\}\right) \right).
\end{align}

Next assume $\bfA(0)=\bfA '(0)=0$. So, let $c_7$ be the first nonzero coefficient of $\bfA$. Also, we recall that $c_4=c_5=\bfB(0)\ne 0$ in this case. 
 Then there exists a positive real number $R$ depending on $v$ and on the coefficients of $\bfA$ and of $\bfB$ only such that if $0<|\l|_v<R$ then 
$$\frac{|c_4|_v^d}{2}< |\bfA_{\bfc,1}(\l)|_v \text{ and }|\bfB_{\bfc,1}(\l)|_v<  2\left|c_7c_4^{d-1}\right|_v\cdot |\l|_v,$$ 
and moreover
$$\left|\frac{\bfA_{\bfc,1}(\l)}{\bfB_{\bfc,1}(\l)}\right|_v>\frac{1}{M\cdot |\l|_v}\ge 8M,$$
where $M:=4\max\left\{1,\frac{|c_7|_v}{|c_4|_v}\right\}$. Hence, if $|\l_0|_v<R$ (using  also that $\bfc(\l_0)\ne 0,\infty$), we obtain
\begin{equation}
\label{last equation for alpha}
|\alpha|_v\ge \frac{1}{M\cdot |\l_0|_v}\ge 8M.
\end{equation}
Then Propositions~\ref{really small lambda} and \ref{d=2 really small lambda} with $n_0= 1$ (combined with \eqref{conversion} for $k_0=1$, which can be used since $\bfB_{\bfc,1}(\l_0)\ne 0$) yield
\begin{align}
\nonumber
& \left|\lim_{n\to\infty}\frac{\log M_{n,v}}{d^{n}}-\frac{\log M_{2,v}}{d^2}\right|\\
\nonumber
& =\frac{1}{d}\cdot  \left| \lim_{n\to\infty}\frac{\log\max\{|A_{\l_0,\alpha, n}|_v, |B_{\l_0,\alpha, n}|_v\}}{d^n} -\frac{\log\max\{|A_{\l_0,\alpha, 1}|_v, |B_{\l_0,\alpha,1}|_v\}}{d}\right|\\
& \le \frac{1+32\max\left\{1,\frac{|c_7|_v}{|c_4|_v}\right\}}{d} . 
\label{conclusion 3}
\end{align}
On the other hand, Proposition~\ref{bounded lambda} yields that if $R\le |\l_0|_v\le Q$ then
\begin{equation}
\label{inequality for middle lambda final}
\left|\lim_{n\to\infty}\frac{\log M_{n,v}}{d^n} -\frac{\log M_{2,v}}{d^2}\right| \le \frac{\log(2Q)-\log(R)}{18}.
\end{equation}
Noting that $R$ and $Q$ depend on $v$ and on the coefficients of $\bfA$ and of $\bfB$ only, inequalities \eqref{inequality for large lambda final},  \eqref{conclusion 1}, \eqref{conclusion 3'},  \eqref{conclusion 2}, \eqref{conclusion 3} and \eqref{inequality for middle lambda final} yield the conclusion of Proposition~\ref{each place}.
\end{proof}



\begin{thebibliography}{99}
\newcommand{\au}[1]{{#1},}
\newcommand{\ti}[1]{\textit{#1},}
\newcommand{\jo}[1]{{#1}}
\newcommand{\vo}[1]{\textbf{#1}}
\newcommand{\yr}[1]{(#1),}
\newcommand{\ppx}[1]{#1,}
\newcommand{\pps}[1]{#1.}
\newcommand{\bk}[1]{{#1},}
\newcommand{\inbk}[1]{in: {#1}}
\newcommand{\xxx}[1]{{arXiv:#1.}}

\bibitem{Baker-Demarco}
\au{M.~Baker and L.~DeMarco}
\ti{Preperiodic points and unlikely intersections}
\jo{Duke Math. J.}
\vo{159} 
\yr{2011} 
\pps{1--29}

\bibitem{Baker-Rumely}
\au{M.~Baker and R.~Rumely}
\ti{Potential theory and dynamics on the Berkovich projective line}
AMS Mathematics Surveys and Monographs {\textbf 159} (2010).



\bibitem{Call-Silverman}
\au{G.~S.~Call and J.~H.~Silverman}
\ti{Canonical heights on varieties with morphisms}
\jo{Compositio Math.}
\vo{89}
\yr{1993}
\pps{163--205}

\bibitem{Devaney}
\au{R.~L.~Devaney and M.~Morabito}
\ti{Limiting Julia Sets for Singularly Perturbed Rational
Maps}
\jo{Internat. J. Bifur. Chaos Appl. Sci. Engrg.}
\vo{18}
\yr{2007}
\pps{3175--3181}

\bibitem{Favre-Rivera-0}
  \au{C.~Favre and J.~Rivera-Letelier}
  \ti{Th\'eor\`eme d'\'equidistribution de Brolin en dynamique
    $p$-addique}
  \jo{C. R. Math. Acad. Sci. Paris}
  \vo{339(4)}
  \yr{2004}
  \pps{271-276}

\bibitem{Favre-Rivera}
\au{C.~Favre and J. Rivera-Letelier}
\ti{\'Equidistribution quantitative des points des petite hauteur sur la droite projective}
\jo{Math. Ann.}
\vo{355}
\yr{2006}
\pps{311--361}


\bibitem{prep-0}
\au{D.~Ghioca, L.-C.~Hsia and T.~J.~Tucker}
\ti{Preperiodic points for families of polynomials}
\jo{Algebra $\&$ Number Theory}
\vo{7}
\yr{2013}
\pps{701--732}

\bibitem{prep}
\au{D.~Ghioca, L.-C.~Hsia and T.~J.~Tucker}
\ti{Preperiodic points for families of rational maps}
submitted for publication, 34 pages.

\bibitem{ingram10}
\au{P.~Ingram}
\ti{Variation of the canonical height for a family of polynomials} 
\jo{J. Reine. Angew. Math.},
to appear (2012), 23 pages.

\bibitem{Lang-diophantine}
S.~Lang, \emph{Fundamentals of Diophantine Geometry}, Springer, 1983, 365 pp. 

\bibitem{M-Z-1}
\au{D.~Masser and U.~Zannier}
\ti{Torsion anomalous points and families of elliptic curves}
\jo{C. R. Math. Acad. Sci. Paris}
\vo{346}
\yr{2008}
\pps{491--494}

\bibitem{M-Z-2}
\au{D.~Masser and U.~Zannier}
\ti{Torsion anomalous points and families of elliptic curves}
\jo{Amer. J. Math.}
\vo{132}
\yr{2010}
\pps{1677--1691}

\bibitem{M-Z-3}
\au{D.~Masser and U.~Zannier}
\ti{Torsion points on  families of squares of elliptic curves}
\jo{Math. Ann.} 
\yr{2012}
\vo{352}
\pps{453--484}

\bibitem{P-Z}
J.~Pila and U.~Zannier, \emph{Rational points in periodic analytic sets and the Manin-Mumford conjecture}, 
Atti Accad. Naz. Lincei Cl. Sci. Fis. Mat. Natur. Rend. Lincei (9) Mat. Appl. \textbf{19} (2008), 149--162.

\bibitem{Serre}
J.~P.~Serre, \emph{Local Fields}, \emph{GTM 67}, Springer-Verlag, 1979.

\bibitem{Sil83}
  J.~Silverman, \emph{Heights and the specialization map for families
    of abelian varieties.}, J. Reine Angew. Math. \textbf{342} (1983),
  197--211. 

\bibitem{Silverman83}
  J.~H.~Silverman, \emph{Variation of the canonical height on elliptic surfaces. I. Three examples}, J.
Reine Angew. Math. \textbf{426} (1992), 151--178.

\bibitem{Silverman-2}
J.~H.~Silverman, \emph{Variation of the canonical height on elliptic surfaces. II. Local analyticity
properties}, J. Number Theory \textbf{48} (1994),  291--329.

\bibitem{Silverman-3}
J.~H.~Silverman, \emph{Variation of the canonical height on elliptic surfaces. III. Global boundedness
properties}, J. Number Theory \textbf{48} (1994), 330--352.

\bibitem{Silverman07}
  J.~H.~Silverman, \emph{The Arithmetic of Dynamical Systems}, \emph{GTM 241},
  Springer-Verlag, 2007.


\bibitem{tate83}
  J.~Tate, \emph{Variation of the canonical height of a point
    depending on a parameter}, Amer.~J.~Math. \textbf{105} (1983), no.~1, 287--294.


\bibitem{Yuan}
X.~Yuan, \emph{Big line bundles over arithmetic varieties}, Invent. Math.
  \textbf{173} (2008), no.~3, 603--649.

\bibitem{Yuan-Zhang}
\au{X.~Yuan and S.~Zhang}
\ti{Calabi Theorem and algebraic dynamics}
preprint (2010), 24 pages.

\bibitem{Zannier}
U.~Zannier, \emph{Some problems of unlikely intersections in arithmetic and
  geometry}, Annals of Mathematics Studies, vol. 181, Princeton University
  Press, Princeton, NJ, 2012, With appendixes by David Masser.











\end{thebibliography}
\end{document}